\documentclass[production,12pt]{style}
\usepackage{latexsym}\usepackage{amsmath}\usepackage{xspace}\usepackage{enumerate}\usepackage{amsfonts}\usepackage{amscd}\usepackage{syntonly}\usepackage{amssymb}

\newtheorem{thm}{Theorem}

\newtheorem{lem}[thm]{Lemma}
\newtheorem{prop}[thm]{Proposition}

\newtheorem{claim}{Claim}

\newtheorem{rem}[thm]{Remark}
\newtheorem{question}{Question}

\newenvironment{pf}[1][Proof]{\noindent \textbf{#1:} }{\hspace{\stretch{1}}}\newenvironment{enui}{\begin{enumerate}[(i)]}{\end{enumerate}}

\newcommand{\BAR}[1]{{\overline{#1}}}

\newcommand\dd{\partial}

\newcommand\al{{\alpha}}

\newcommand\eps{\varepsilon}
\newcommand\Om{\Omega}
\newcommand\om{\omega}

\renewcommand\phi{\varphi}

\newcommand\K{\operatorname{K}}

\newcommand\kkk{\operatorname{k}}

\newcommand{\N}{\mathbb{N}}
\newcommand{\Z}{\mathbb{Z}}

\newcommand{\R}{\mathbb{R}}

\newcommand\Cross{{\times}}
\newcommand{\wt}[1]{\widetilde{#1}}
\newcommand\iso{\cong}

\newcommand\nn{\nonumber}

\newcommand\C{\mathbb C}

\newcommand\U{\operatorname{U}}

\newcommand\x{\times}
\newcommand\wo{\setminus}
\newcommand\sub{\subseteq}
\newcommand\one{\mathbf{1}}
\newcommand\Ham{\operatorname{Ham}}

\newcommand\diam{\operatorname{diam}}
\newcommand\Diam{\operatorname{Diam}}
\newcommand\D{\mathbb{D}}

\newcommand\id{\operatorname{id}}
\newcommand\pr{\operatorname{pr}}

\newcommand\const{\equiv}

\newcommand\g{\mathfrak{g}}
\newcommand\lan{\langle}
\newcommand\ran{\rangle}

\newcommand\lf{{\operatorname{L}}}
\newcommand\kf{{\operatorname{K}}}
\newcommand\nf{{\operatorname{N}}}

\newcommand\then{{\Longrightarrow}}
\newcommand\cont{{\supseteq}}
\newcommand\HH{\mathcal{H}}
\newcommand\DD{\mathcal{D}}

\newcommand\Int{\operatorname{Int}}

\title{Hofer Geometry of a Subset of a Symplectic Manifold}

\author{Jan Swoboda and Fabian Ziltener}

\begin{document}

\begin{abstract} To every closed subset $X$ of a symplectic manifold $(M,\om)$ we associate a natural group of Hamiltonian diffeomorphisms $\Ham(X,\om)$. We equip this group with a semi-norm $\Vert\cdot\Vert^{X,\om}$, generalizing the Hofer norm. We discuss $\Ham(X,\om)$ and $\Vert\cdot\Vert^{X,\om}$ if $X$ is a symplectic or isotropic submanifold. The main result involves the relative Hofer diameter of $X$ in $M$. Its first part states that for the unit sphere in $\R^{2n}$ this diameter is bounded below by $\frac\pi2$, if $n\geq2$. Its second part states that for $n\geq2$ and $d\geq n+1$ there exists a compact set in $\R^{2n}$ of Hausdorff dimension at most $d$, with relative Hofer diameter bounded below by $\pi/\kkk(n,d)$, where $\kkk(n,d)$ is an explicitly defined integer.
\end{abstract}

\maketitle

\tableofcontents

\section{Motivation and main results}\label{sec:mot main}
The theme of this article is the following.\\

\begin{question}\label{q:small} 
How much symplectic geometry can a small subset of a symplectic manifold carry?
\end{question}
To be specific, we interpret ``small'' as ``of Hausdorff dimension bounded above by a given number''. In the article \cite{SZSmall} we gave some answers to this question in terms of the displacement energy of the subset, non-squeezing, and exoticness of symplectic structures. Here we look at this question from a dynamical point of view. The goal is to lay the foundations of a Hofer geometry for subsets of a symplectic manifolds, both from an absolute and relative view-point, and to explore this geometry in examples. 

\subsubsection*{Absolute Hofer geometry} 
Let $(M,\om)$ be a symplectic manifold and $X\sub M$ a closed subset. (For simplicity all manifolds in this paper are assumed to have empty boundary.) We define the set of \emph{Hamiltonian diffeomorphisms of $X$, $\Ham(X,M,\om)$}, as follows. 

Let $V:[0,1]\x M\to TM$ be a smooth time-dependent vector field on $M$. For every $t\in[0,1]$ we denote by $\phi_V^t$ the time-$t$ flow of $V$. Its domain is by definition the set $\DD_V^t$ of all points $x_0\in M$ for which the problem 
\[\dot x=V\circ x,\quad x(0)=x_0\] 
has a solution $x\in C^\infty([0,t],M)$. We say that $V$ is \emph{$X$-compatible} iff $X\sub\DD_V^1$, and $\phi_V^t(X)=X$, for every $t\in[0,1]$. For a function $H\in C^\infty([0,1]\x M,\R)$ we denote by $X_H:=X^\om_H$ its time-dependent Hamiltonian vector field, and we abbreviate $\phi_H^t:=\phi_{H,\om}^t:=\phi_{X_H}^t$. We define 
\begin{equation}\label{eq:HH M om X}\HH(M,\om,X):=\big\{H\in C^\infty([0,1]\x M,\R)\,\big|\,X_H\textrm{ is }X\textrm{-compatible}\big\},\end{equation}
\begin{equation}\label{eq:Ham X om}\Ham(X,\om):=\Ham(X,M,\om):=\big\{\phi_H^1|_X\,\big|\,H\in\HH(M,\om,X)\big\}.
\end{equation}
Note that for $X=M$
\[\HH(M,\om):=\HH(M,\om,M)\]
is the set of all functions $H\in C^\infty([0,1]\x M,\R)$ whose Hamiltonian time-$t$ flow is well-defined on $M$ and a diffeomorphism of $M$, for every $t\in[0,1]$. Furthermore, $\Ham(M,\om)$ is the set of all time-one flows of functions in $\HH(M,\om)$. The following result shows that $\Ham(X,\om)$ together with composition is a group, and that it naturally generalizes $\Ham(M,\om)$.  
\begin{prop}[Hamiltonian diffeomorphisms of a subset]\label{prop:Ham X om} The following statements hold.
\begin{enui}
\item\label{prop:Ham X om:group} The set $\Ham(X,\om)$ is a subgroup of the group of homeomorphisms of $X$. 
\item\label{prop:Ham X om:sympl}If $X$ is a symplectic submanifold of $M$ then 
\begin{equation}\label{eq:Ham X om |}\Ham(X,\om)=\Ham(X,\om|_X)\end{equation}
(where on the right-hand side we regard $X$ as a subset of itself).
\end{enui}
\end{prop}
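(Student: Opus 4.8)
The plan is to treat the two parts separately: part~\ref{prop:Ham X om:group} by exhibiting explicit Hamiltonians for the identity, composition and inverse, and part~\ref{prop:Ham X om:sympl} by combining a linear-algebra identity with the symplectic neighbourhood theorem. Throughout, the recurring technical device is that two curves solving the same Hamiltonian equation with the same initial point agree as long as both are defined, so it suffices to control the generating Hamiltonian only near $X$.

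For part~\ref{prop:Ham X om:group}, first note that every $\phi_H^1|_X$ with $H\in\HH(M,\om,X)$ is a homeomorphism of $X$: the flow $\phi^t_H$ is a diffeomorphism between the open subsets of $M$ on which it and its inverse are defined, and $\phi^t_H(X)=X$ by $X$-compatibility, so $\phi^1_H|_X$ is a continuous bijection of $X$, and continuity of its inverse will follow once closure under inversion is established. The identity is $\phi^1_0|_X$. For closure under composition, given $F,G\in\HH(M,\om,X)$ I would set $\psi^t:=\phi^t_F\circ\phi^t_G$ and invoke the classical composition formula for Hamiltonian flows: on the open subset $\Om$ of $[0,1]\x M$ where all the flows and inverse flows involved are defined (open by standard ODE theory, and containing $[0,1]\x X$ since $\phi^t_F(X)=X$), the curve $t\mapsto\psi^t(x)$ solves the Hamiltonian equation of $H_t:=F_t+G_t\circ(\phi^t_F)^{-1}$. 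Cutting off $H$ by a function equal to $1$ on a neighbourhood $N\sub\Om$ of $[0,1]\x X$ and supported in $\Om$ produces $\wt H\in C^\infty([0,1]\x M,\R)$ agreeing with $H$ on $N$; for $x\in X$ the curve $\psi^t(x)$ stays in $X\sub N$, hence also solves the equation of $\wt H$, so $\phi^t_{\wt H}(x)=\psi^t(x)$ by uniqueness of solutions of ODEs. Since $\psi^t(X)=\phi^t_F(\phi^t_G(X))=\phi^t_F(X)=X$, the field $X_{\wt H}$ is $X$-compatible and $\phi^1_{\wt H}|_X=\phi^1_F|_X\circ\phi^1_G|_X$. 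Closure under inversion is entirely analogous, replacing $F$ by the time-reversed Hamiltonian $\bar F_t:=-F_t\circ\phi^t_F$, whose flow restricted to $X$ is $(\phi^t_F|_X)^{-1}$; the same cutoff-and-uniqueness argument gives an $X$-compatible $\wt{\bar F}$ with $\phi^1_{\wt{\bar F}}|_X=(\phi^1_F|_X)^{-1}$, which also supplies the continuous inverse needed above.

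For part~\ref{prop:Ham X om:sympl}, the crux is the following observation about a symplectic submanifold: if $X_H$ is tangent to $X$ along $X$ — which holds whenever $X_H$ is $X$-compatible (differentiate the orbits of points of $X$ at $t=0$) — then, because $\om|_X$ is non-degenerate, pairing $\om(X_H(x),\cdot)=\pm dH_x$ with vectors tangent to $X$ forces $X_H|_X=X^{\om|_X}_{H|_{[0,1]\x X}}$; conversely, a Hamiltonian $H$ on $M$ with $H|_X=h$ and with $dH_x$ annihilating the symplectic normal space $(T_xX)^\om$ for every $x\in X$ has $X_H$ tangent to $X$ along $X$ and $X_H|_X=X^{\om|_X}_h$. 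Granting this, the inclusion ``$\sub$'' is immediate: for $H\in\HH(M,\om,X)$ set $h:=H|_{[0,1]\x X}$, so $\phi^t_h=\phi^t_H|_X$ by uniqueness of ODE solutions, whence $h\in\HH(X,\om|_X)$ and $\phi^1_H|_X=\phi^1_h\in\Ham(X,\om|_X)$. For ``$\cont$'', given $h\in\HH(X,\om|_X)$ I would use the symplectic neighbourhood theorem to identify a neighbourhood of $X$ in $M$ with one of the zero section in the symplectic normal bundle $E=(TX)^\om\to X$, let $H$ on this neighbourhood be the pullback of $h$ under the bundle projection (so $H|_X=h$ and $dH$ kills $E$ along $X$), and extend $H$ to all of $M$ by a cutoff; then $X_H|_X=X^{\om|_X}_h$, so $\phi^t_H|_X=\phi^t_h$ by uniqueness, $X_H$ is $X$-compatible, and $\phi^1_h=\phi^1_H|_X\in\Ham(X,\om)$.

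I expect the main obstacle to lie in part~\ref{prop:Ham X om:group}: because the flows in play need not be globally defined, the composition and time-reversal formulas are only available on open sets near $X$, and it is precisely the cutoff-plus-uniqueness argument that lets one pass from ``$\psi^t$ solves the equation near $X$'' to a genuine globally defined $X$-compatible Hamiltonian realizing the composition (resp.\ inverse) on $X$. In part~\ref{prop:Ham X om:sympl} everything is formal once the symplectic neighbourhood theorem is invoked to produce an extension of $h$ whose differential annihilates the symplectic normal directions.
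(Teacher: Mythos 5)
Your proposal is correct and takes essentially the same route as the paper: part (i) via the composition and time-reversal Hamiltonians, cut off to agree with them near $X$ and combined with uniqueness of ODE solutions (the paper's Lemma \ref{le:BAR H K} plus the Urysohn cutoff), and part (ii) via tangency of $X_H$ to $X$ together with non-degeneracy of $\om|_X$ for the inclusion ``$\sub$'' and a tubular-neighbourhood pullback extension for ``$\cont$'' (the paper's Lemmas \ref{le:T X}, \ref{le:H|X} and Proposition \ref{prop:H wt H}). The only cosmetic difference is that you invoke the symplectic neighbourhood theorem, whereas the paper only needs an ordinary tubular neighbourhood of the symplectic normal bundle $TX^\om$ normalized to first order along $X$, since all that is used is that $dH$ annihilates $(T_xX)^\om$ at points of $X$.
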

The trickiest part of the proof of this result is the inclusion ``$\cont$'' in (\ref{eq:Ham X om |}). The idea is to extend a given Hamiltonian function $H:[0,1]\x X\to\R$ to a function $\wt H:[0,1]\x M\to\R$ in such a way that the restriction of the time-$t$ flow of $\wt H$ to $X$ agrees with the time-$t$ flow of $H$ (see Proposition \ref{prop:H wt H} below). 

We define the \emph{Hofer semi-norm on $\Ham(X,\om)$} to be the map 
\[\Vert\cdot\Vert^{X,\om}:\Ham(X,\om)\to[0,\infty]\] 
given as follows. Let $H\in C^\infty([0,1]\x M,\R)$. We define the \emph{Hofer norm of $H$ on $X$} to be 
\begin{equation}\label{eq:Vert H X}\Vert H\Vert_X:=\int_0^1\big(\sup_XH(t,\cdot)-\inf_XH(t,\cdot)\big)\,dt\in[0,\infty].\end{equation}
(It follows from Lemma \ref{le:f} below that this integral is well-defined.) For every $\phi\in\Ham(X,\om)$ we define  
\begin{equation}\label{eq:Vert X om}\Vert\phi\Vert^{X,\om}:=\inf\big\{\Vert H\Vert_X\,\big|\,H\in\HH(M,\om,X):\,\phi_H^1|_X=\phi\big\}.
\end{equation}
By the next result the map $\Vert\cdot\Vert^{X,\om}$ is a semi-norm, which naturally generalizes $\Vert\cdot\Vert^{M,\om}$. Furthermore, $\Vert\cdot\Vert^{M,\om}$ is a norm. We will use the following definition. Let $G$ be a group. By a \emph{semi-norm}\label{semi-norm} on $G$ we mean a map $\Vert\cdot\Vert:G\to[0,\infty]$ such that
\begin{eqnarray}
\label{eq:one}&\Vert\one\Vert=0,&\\
\label{eq:g -1}&\Vert g^{-1}\Vert=\Vert g\Vert,&\\
\label{eq:gh}&\Vert gh\Vert\leq\Vert g\Vert+\Vert h\Vert,&
\end{eqnarray}
for every $g,h\in G$. We call $\Vert\cdot\Vert$ a \emph{norm} iff also
\begin{equation}\label{eq:non-deg}\Vert g\Vert=0\then g=\one.
\end{equation}
We call $\Vert\cdot\Vert$ \emph{invariant} iff 
\begin{equation}\label{eq:inv}\Vert ghg^{-1}\Vert=\Vert h\Vert,\quad\forall g,h\in G.
\end{equation}
\begin{prop}[Hofer semi-norm for a subset]\label{prop:Vert} The following statements hold.
\begin{enui}
\item\label{prop:Vert:semi} The map $\Vert\cdot\Vert^{X,\om}$ is an invariant semi-norm.
\item\label{prop:Vert:sympl} Assume that $X$ is a symplectic submanifold of $M$. Then the map $\Vert\cdot\Vert^{X,\om}$ is a norm and
\begin{equation}\label{eq:Vert X om|}\Vert\cdot\Vert^{X,\om}=\Vert\cdot\Vert^{X,\om|_X}.
\end{equation}
\end{enui}
\end{prop}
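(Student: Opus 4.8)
The plan is to prove Proposition~\ref{prop:Vert} by treating the two parts in sequence, leaning on Proposition~\ref{prop:Ham X om} throughout.

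\textbf{Part~\ref{prop:Vert:semi}: $\Vert\cdot\Vert^{X,\om}$ is an invariant semi-norm.} First I would check that the quantity in (\ref{eq:Vert X om}) is well-defined, i.e.\ that the infimum is over a nonempty set: every $\phi\in\Ham(X,\om)$ is by definition $\phi_H^1|_X$ for some $H\in\HH(M,\om,X)$, so there is nothing to do here. For (\ref{eq:one}) I would take $H\const0$, which lies in $\HH(M,\om,X)$ and has $\Vert H\Vert_X=0$ and $\phi_H^1|_X=\id_X=\one$; hence $\Vert\one\Vert^{X,\om}=0$. For (\ref{eq:g -1}): given $H\in\HH(M,\om,X)$ with $\phi_H^1|_X=\phi$, the standard reparametrization $\BAR H(t,x):=-H(1-t,x)$ generates the flow $\phi_H^1\circ(\phi_H^t)^{-1}$, so $\phi_{\BAR H}^1=(\phi_H^1)^{-1}$; one checks $\BAR H\in\HH(M,\om,X)$ (the flow of $X_{\BAR H}$ preserves $X$ because the flow of $X_H$ does, and $X\sub\DD_{\BAR H}^1$ for the same reason), and $\Vert\BAR H\Vert_X=\Vert H\Vert_X$ since $\sup_X(-H(1-t,\cdot))-\inf_X(-H(1-t,\cdot))=\sup_XH(1-t,\cdot)-\inf_XH(1-t,\cdot)$ and $\int_0^1$ is invariant under $t\mapsto1-t$. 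Taking infima gives $\Vert\phi^{-1}\Vert^{X,\om}\leq\Vert\phi\Vert^{X,\om}$, and the reverse by symmetry. For (\ref{eq:gh}): given $H,K\in\HH(M,\om,X)$ generating $\phi,\psi$ on $X$, the concatenation $H\#K$ defined in the usual way (run $K$ on $[0,\tfrac12]$ rescaled, then $H$ on $[\tfrac12,1]$ rescaled, or more cleanly $(H\#K)(t,x):=H(t,x)+K(t,(\phi_H^t)^{-1}(x))$) generates $\phi_H^t\circ\phi_K^t$, lies in $\HH(M,\om,X)$ (composition of two maps preserving $X$ preserves $X$, and $X$ lies in the relevant flow domains by Proposition~\ref{prop:Ham X om}\eqref{prop:Ham X om:group} applied to the pointwise statement, or directly), restricts to $\phi\circ\psi$ on $X$, and satisfies $\Vert H\#K\Vert_X\leq\Vert H\Vert_X+\Vert K\Vert_X$ because $\sup_X(f+g)\leq\sup_Xf+\sup_Xg$ and $\inf_X(f+g)\geq\inf_Xf+\inf_Xg$; taking infima over $H$ and $K$ separately yields (\ref{eq:gh}). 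For invariance (\ref{eq:inv}): if $\psi=\phi_G^1|_X$ with $G\in\HH(M,\om,X)$, then for $H\in\HH(M,\om,X)$ the function $H\circ(\id\x\phi_G^t)^{-1}\cdot(\phi_G^t)^*$—concretely $(t,x)\mapsto H(t,(\phi_G^t)^{-1}(x))$—generates $\phi_G^t\phi_H^t(\phi_G^t)^{-1}$, lies in $\HH(M,\om,X)$, restricts to $\psi\phi\psi^{-1}$ on $X$, and has the same Hofer norm on $X$ since $\phi_G^t(X)=X$ implies $\sup_XH(t,(\phi_G^t)^{-1}(\cdot))=\sup_XH(t,\cdot)$ and likewise for $\inf$. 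Taking infima gives $\Vert\psi\phi\psi^{-1}\Vert^{X,\om}\leq\Vert\phi\Vert^{X,\om}$, and the reverse applying this to $\psi^{-1}$.

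\textbf{Part~\ref{prop:Vert:sympl}: the symplectic submanifold case.} Assume $X$ is a symplectic submanifold of $M$. For the identity (\ref{eq:Vert X om|}) the inclusion ``$\leq$'' is the subtle direction and is exactly where Proposition~\ref{prop:H wt H} (extending a Hamiltonian on $X$ to one on $M$ whose flow restricts correctly) does the work: given $H\in\HH(X,\om|_X)$ generating $\phi\in\Ham(X,\om|_X)$, extend it to $\wt H\in\HH(M,\om,X)$ with $\phi_{\wt H}^t|_X=\phi_H^t$; the extension in Proposition~\ref{prop:H wt H} can be arranged to be supported near $X$, and in particular one can arrange $\sup_X\wt H(t,\cdot)=\sup_XH(t,\cdot)$ and $\inf_X\wt H(t,\cdot)=\inf_XH(t,\cdot)$ (these are restrictions to $X$, so any extension already has equal sup/inf \emph{over $X$}), whence $\Vert\wt H\Vert_X=\Vert H\Vert_X$; taking infima gives $\Vert\phi\Vert^{X,\om}\leq\Vert\phi\Vert^{X,\om|_X}$. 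Conversely ``$\geq$'': given $H\in\HH(M,\om,X)$ with $\phi_H^1|_X=\phi$, its restriction $H|_{[0,1]\x X}$ lies in $\HH(X,\om|_X)$ (the flow of $X_{H|_X}^{\om|_X}$ equals $\phi_H^t|_X$ because $X$ is symplectic so $X_H$ is tangent to $X$ along $X$—this is part of the content of Proposition~\ref{prop:Ham X om}\eqref{prop:Ham X om:sympl}), generates $\phi$, and has $\Vert H|_X\Vert_X=\Vert H\Vert_X$ by definition of the latter (both only see values on $X$); taking infima gives the reverse inequality. Finally, that $\Vert\cdot\Vert^{X,\om}$ is a \emph{norm}—i.e.\ (\ref{eq:non-deg})—reduces via (\ref{eq:Vert X om|}) to the classical fact that the Hofer norm on $\Ham(X,\om|_X)$ is nondegenerate (Hofer, Polterovich, Lalonde--McDuff); here one uses that $X$, being a closed symplectic submanifold, is itself a symplectic manifold without boundary, so the classical nondegeneracy theorem applies verbatim, modulo noting that $\Ham(X,\om|_X)$ as defined in (\ref{eq:Ham X om}) with $M$ replaced by $X$ is the usual group of compactly supported (or, for closed $X$, all) Hamiltonian diffeomorphisms.

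\textbf{Main obstacle.} The delicate point is the ``$\leq$'' inequality in (\ref{eq:Vert X om|}): one must produce an extension $\wt H$ of $H$ whose Hofer norm \emph{on $X$} is not increased. Since the Hofer norm on $X$ only depends on the restriction of the function to $X$, any extension works for the norm comparison—the real issue is purely that the extension lies in $\HH(M,\om,X)$ and restricts to the right flow, which is precisely Proposition~\ref{prop:H wt H}. So the genuine mathematical input is entirely packaged in that proposition (and in the classical nondegeneracy of the Hofer norm); the rest is the bookkeeping of sup/inf and reparametrization identities sketched above. I would also take care, in the nondegeneracy step, that $X$ may be noncompact, in which case one should cite the appropriate version of the Hofer nondegeneracy theorem for open symplectic manifolds and compactly supported Hamiltonians, and check that the definition (\ref{eq:Ham X om}) specialized to $M=X$ indeed yields compactly supported diffeomorphisms when $X$ is open (since $H\in\HH(X,\om|_X)$ need not be compactly supported a priori, one may first need to cut it off, which does not increase the Hofer norm on a displaceable region—or, more simply, restrict attention to the case relevant for the main theorems where $X$ is compact).
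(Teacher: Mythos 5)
Your route coincides with the paper's: for part (\ref{prop:Vert:semi}) the inverse/composition Hamiltonians together with Lemma \ref{le:BAR H K}, and for part (\ref{prop:Vert:sympl}) the tangency of $X_H$ to $X$ (Lemmas \ref{le:T X} and \ref{le:H|X}) for ``$\geq$'', the extension Proposition \ref{prop:H wt H} for ``$\leq$'' (including the correct observation that $\Vert\cdot\Vert_X$ only sees values on $X$, so any extension has the same norm), and a reduction of nondegeneracy to the case $X=M$. One technical point you gloss over: $\HH(M,\om,X)$ consists of functions smooth on all of $[0,1]\x M$, whereas your competitors $K^t\circ(\phi_H^t)^{-1}$ (triangle inequality) and $H^t\circ(\phi_G^t)^{-1}$ (invariance) are defined only on $\phi_H^t(\DD_{X_H}^t)$, resp.\ $\phi_G^t(\DD_{X_G}^t)$, since the flows need not be complete on $M$; as written they do not lie in $\HH(M,\om,X)$. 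They must first be cut off near $X$ by a Urysohn function, exactly as in the paper's proof of Proposition \ref{prop:Ham X om}(\ref{prop:Ham X om:group}) --- harmless for $\Vert\cdot\Vert_X$ because the cutoff agrees with the original on a neighborhood of $X$, but the step is needed (your concatenation variant avoids it for the triangle inequality; the conjugation formula does not). Also, $-H(1-t,\cdot)$ generates $t\mapsto\phi_H^{1-t}\circ(\phi_H^1)^{-1}$, not $\phi_H^1\circ(\phi_H^t)^{-1}$; the time-one map is the same, so your conclusion stands.

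The genuine gap is the nondegeneracy step. The norm $\Vert\cdot\Vert^{X,\om|_X}$ is built from $\HH(X,\om|_X)$, which contains non-compactly supported Hamiltonians (the harmonic oscillator is the paper's own example of an element of $\Ham(\R^{2n},\om_0)\setminus\Ham_c(\R^{2n},\om_0)$), and $X$ need not be compact --- the statement covers, e.g., $X=M=\R^{2n}$ --- so the classical nondegeneracy theorems do not apply verbatim, and ``restricting attention to compact $X$'' is not an available move. Your proposed repair (``cut it off, which does not increase the Hofer norm'') is also not right as stated: truncating a Hamiltonian changes its time-one map, so the actual content is an isotopy-tracking cutoff producing, for a small ball $U$ with compact closure displaced by $\phi$, some $\psi\in\Ham_c(M,\om)$ with $\psi|_{\BAR{U}}=\phi|_{\BAR{U}}$ and $\Vert\psi\Vert^{M,\om}_c\leq\Vert\phi\Vert^{M,\om}+\eps$. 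This is Lemma \ref{le:phi psi} (quoted from \cite{SZSmall}); combined with the Lalonde--McDuff energy--capacity inequality $\Vert\psi\Vert^{M,\om}_c\geq\tfrac12 w(U)$ \cite{LMGeo}, it yields $\Vert\phi\Vert^{M,\om}>0$, which is how the paper proves (\ref{eq:non-deg}) after reducing to $X=M$ via (\ref{eq:Vert X om|}). Without this localization lemma (or an equivalent), the ``norm'' assertion in part (\ref{prop:Vert:sympl}) is not established by your argument.
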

The proof of this result is similar to the proof of Proposition \ref{prop:Ham X om}. 

For a general closed subset $X\sub M$ the map $\Vert\cdot\Vert^{X,\om}$ may be degenerate, i.e., not satisfy (\ref{eq:non-deg}). It is maximally degenerate, if $X$ is a connected isotropic submanifold. This is a consequence of the following result.
\begin{prop}\label{prop:X iso} If $X$ is a connected isotropic submanifold then 
\begin{equation}\label{eq:Vert X om 0}\Vert\cdot\Vert_X\const0:\HH(M,\om,X)\to[0,\infty].\end{equation}
\end{prop}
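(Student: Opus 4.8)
The plan is to show that every $H \in \HH(M,\om,X)$ is constant on $X$ at each time $t$, which immediately gives $\Vert H\Vert_X \equiv 0$ by the definition \eqref{eq:Vert H X}. So fix $H \in \HH(M,\om,X)$ and $t\in[0,1]$; I want to prove that $H(t,\cdot)$ is constant on $X$. Since $X$ is connected, it suffices to show that the differential of the function $x \mapsto H(t,x)$ vanishes along $X$, i.e.\ that $dH(t,\cdot)|_x$ annihilates $T_xX$ for every $x\in X$.

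The key observation is that $X$-compatibility of $X_H$ forces the Hamiltonian vector field to be tangent to $X$. Indeed, for each $t$ the flow $\phi_H^t$ maps $X$ to $X$ by definition of $\HH(M,\om,X)$; differentiating the relation $\phi_H^t(X) = X$ in $t$ (and using that $X$ is a submanifold) shows that $X_H(t,x) \in T_xX$ for all $x\in X$ and $t\in[0,1]$. Now use the defining equation of the Hamiltonian vector field, $\om(X_H(t,\cdot), \cdot) = -dH(t,\cdot)$ (with whatever sign convention the paper has fixed for $X_H^\om$). For $x\in X$ and any $v \in T_xX$ we then have
\[
dH(t,\cdot)|_x(v) = -\om_x\big(X_H(t,x), v\big).
\]
Both $X_H(t,x)$ and $v$ lie in $T_xX$, and since $X$ is isotropic, $\om_x$ vanishes on $T_xX \times T_xX$; hence the right-hand side is zero. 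Thus $dH(t,\cdot)|_x$ vanishes on $T_xX$ for every $x\in X$, and connectedness of $X$ yields that $H(t,\cdot)$ is constant on $X$. Therefore $\sup_X H(t,\cdot) - \inf_X H(t,\cdot) = 0$ for every $t$, and integrating gives $\Vert H\Vert_X = 0$.

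The only genuinely nontrivial point is the first step — justifying that $\phi_H^t(X)=X$ for all $t$ actually implies $X_H(t,x)\in T_xX$ for $x\in X$. This is the step I expect to require the most care, since a priori the flow only preserves $X$ as a set, not necessarily in a way that is smooth transverse to $X$; but because $X$ is a submanifold and $t\mapsto \phi_H^t(x)$ is a smooth curve in $M$ lying entirely in $X$ (as $\phi_H^t(x)\in\phi_H^t(X)=X$), its velocity $X_H(t,\phi_H^t(x))$ at $t$ is a tangent vector to $X$ at $\phi_H^t(x)$; evaluating at $t=0$, or equivalently replacing $x$ by $(\phi_H^t)^{-1}(x)$, covers all of $X$. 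Once tangency is in hand, the isotropy argument is immediate from the definition of the Hamiltonian vector field. (If the paper has already recorded, in the discussion preceding Proposition \ref{prop:Ham X om}, that $X$-compatibility of a vector field implies tangency to $X$, one can simply cite that and skip directly to the isotropy computation.)
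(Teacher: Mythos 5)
Your proposal is correct and follows essentially the same route as the paper: the paper isolates your key step as Lemma \ref{le:T X} (tangency of $X_{H^t}$ to $X$, proved exactly as you do, by differentiating the flow line $t\mapsto\phi_H^t(x_0)$ which stays in $X$ and then using surjectivity of $\phi_H^t|_X$), and then kills $dH^t$ on $T_xX$ via isotropy and concludes by connectedness (phrased there as constancy of $H^t$ along smooth paths in $X$). No gaps; nothing further needed.
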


\subsubsection*{Relative Hofer geometry}
Let $Y\sub M$ be a closed subset containing $X$. We may compare the Hofer geometries of the sets $X$ and $Y$ as follows: We define the \emph{Hofer semi-norm on $X$ relative to $Y$} to be the map
\begin{eqnarray}\label{eq:Vert X Y om}&\Vert\cdot\Vert_X^{Y,\om}:\Ham(X,\om)\to[0,\infty],&\\
\label{eq:Vert phi X Y om}&\Vert\phi\Vert_X^{Y,\om}:=\inf\big\{\Vert\psi\Vert^{Y,\om}\,\big|\,\psi\in\Ham(Y,\om):\,\psi|_X=\phi\big\}.&
\end{eqnarray}
Intuitively, this map measures how short a Hamiltonian path on $X$ can be made inside $Y$. The definition (\ref{eq:Vert X Y om}) has the following natural properties. 
\begin{prop}[Relative Hofer semi-norm]\label{prop:Vert X Y Y'} The map $\Vert\cdot\Vert_X^{Y,\om}$ is a semi-norm. Furthermore, let $Y'\sub M$ be a closed subset such that $Y$ is contained in the interior of $Y'$. If $Y$ is compact and non-empty, then we have
\begin{equation}\label{eq:Vert X Y Y'}\Vert\cdot\Vert_X^{Y,\om}\geq\Vert\cdot\Vert_X^{Y',\om}.
\end{equation}
\end{prop}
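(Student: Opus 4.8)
The plan is to verify the three semi-norm axioms for $\Vert\cdot\Vert_X^{Y,\om}$ directly from the definition (\ref{eq:Vert phi X Y om}), using that $\Vert\cdot\Vert^{Y,\om}$ is already a semi-norm (Proposition \ref{prop:Vert}\ref{prop:Vert:semi}), and then to prove the monotonicity inequality (\ref{eq:Vert X Y Y'}) by extending a Hamiltonian on $Y$ to one on $Y'$ that is "cut off" outside a neighbourhood of $Y$ without increasing its Hofer norm on $Y'$.

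\textbf{Semi-norm axioms.} For (\ref{eq:one}), note that $\one\in\Ham(Y,\om)$ restricts to $\one\in\Ham(X,\om)$ and $\Vert\one\Vert^{Y,\om}=0$, so the infimum defining $\Vert\one\Vert_X^{Y,\om}$ is $0$. For (\ref{eq:g -1}), observe that $\psi\mapsto\psi^{-1}$ is a bijection of $\{\psi\in\Ham(Y,\om):\psi|_X=\phi\}$ onto $\{\psi\in\Ham(Y,\om):\psi|_X=\phi^{-1}\}$ (restriction commutes with taking inverses, since $(\psi|_X)^{-1}=\psi^{-1}|_X$), and $\Vert\psi^{-1}\Vert^{Y,\om}=\Vert\psi\Vert^{Y,\om}$; taking infima gives the claim. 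For the triangle inequality (\ref{eq:gh}), given $\phi,\phi'\in\Ham(X,\om)$ and $\eps>0$, pick $\psi,\psi'\in\Ham(Y,\om)$ with $\psi|_X=\phi$, $\psi'|_X=\phi'$, and $\Vert\psi\Vert^{Y,\om}\leq\Vert\phi\Vert_X^{Y,\om}+\eps$, $\Vert\psi'\Vert^{Y,\om}\leq\Vert\phi'\Vert_X^{Y,\om}+\eps$; then $\psi\psi'\in\Ham(Y,\om)$ restricts to $\phi\phi'$ on $X$, so $\Vert\phi\phi'\Vert_X^{Y,\om}\leq\Vert\psi\psi'\Vert^{Y,\om}\leq\Vert\psi\Vert^{Y,\om}+\Vert\psi'\Vert^{Y,\om}\leq\Vert\phi\Vert_X^{Y,\om}+\Vert\phi'\Vert_X^{Y,\om}+2\eps$, and letting $\eps\to0$ finishes. (One should also check the infima are over nonempty sets when needed, or allow the value $+\infty$, which is harmless since $[0,\infty]$ is the target.)

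\textbf{Monotonicity.} Fix $\phi\in\Ham(X,\om)$ and let $\psi\in\Ham(Y,\om)$ with $\psi|_X=\phi$ and $\Vert\psi\Vert^{Y,\om}<\infty$; choose $H\in\HH(M,\om,Y)$ with $\phi_H^1|_Y=\psi$ and $\Vert H\Vert_Y$ close to $\Vert\psi\Vert^{Y,\om}$. The key point is that since $Y$ is compact and contained in the interior of $Y'$, there is an open set $U$ with $Y\sub U$ and $\BAR U\sub\Int Y'$; choose a cutoff $\beta\in C^\infty(M,[0,1])$ with $\beta\equiv1$ on a neighbourhood of $Y$ and $\supp\beta\sub U$, and replace $H$ by $\wt H(t,x):=\beta(x)\big(H(t,x)-c(t)\big)$ for a suitable smooth $c(t)$ (e.g. $c(t)=\inf_Y H(t,\cdot)$, smoothed if necessary, or simply $c(t)=\sup_Y H(t,\cdot)$; the precise choice is chosen so that $\wt H$ is smooth and its sup/inf over $Y'$ do not exceed those over $Y$). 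Because $\wt H$ agrees with $H-c(t)$ near $Y$, its Hamiltonian vector field agrees with $X_H$ near $Y$, so $\phi_{\wt H}^t$ preserves $Y$ and restricts to $\phi_H^t$ there; thus $\wt H\in\HH(M,\om,Y')$ and $\phi_{\wt H}^1|_X=\phi$. Moreover $\wt H$ vanishes outside $U\supset Y$, so $\sup_{Y'}\wt H(t,\cdot)$ and $-\inf_{Y'}\wt H(t,\cdot)$ are controlled by $\sup_Y H(t,\cdot)-\inf_Y H(t,\cdot)$, giving $\Vert\wt H\Vert_{Y'}\leq\Vert H\Vert_Y$. Hence $\phi$ admits an extension to $\Ham(Y',\om)$ of Hofer norm at most $\Vert H\Vert_Y$, and taking infima yields (\ref{eq:Vert X Y Y'}).

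\textbf{Main obstacle.} The routine part is the three axioms; the real content is the cutoff construction, and the subtlety there is twofold: first, ensuring $\wt H$ is genuinely smooth in $(t,x)$ (this forces $c(t)$ to be a smooth function of $t$ — one may need to smooth $t\mapsto\sup_Y H(t,\cdot)$ slightly, or argue that it is already smooth away from a measure-zero set and absorb the error into $\eps$), and second, verifying carefully that multiplying by $\beta$ and subtracting $c(t)$ does not enlarge the oscillation of $H$ over the larger set $Y'$ — this uses that $\wt H\equiv0$ off $\supp\beta$, so on $Y'\wo\supp\beta$ it contributes $0$ to both sup and inf, while on $\supp\beta\cap Y'$ one has $0\le\beta\le1$ so $\wt H(t,x)$ lies between $0$ and $H(t,x)-c(t)$ (up to sign), whence $\sup_{Y'}\wt H-\inf_{Y'}\wt H\le\sup_Y(H-c(t))^+ + \sup_Y(H-c(t))^-\le\sup_Y H-\inf_Y H$ for the chosen $c(t)$. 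I expect the smoothness-in-$t$ bookkeeping to be the only genuinely fiddly step; everything else is a direct unwinding of definitions, and one should also invoke Proposition \ref{prop:Ham X om} to know $\Ham(Y',\om)$ and the restriction maps behave as claimed.
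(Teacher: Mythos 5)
Your verification of the three semi-norm axioms matches the paper's argument and is fine, and your overall cutoff strategy for (\ref{eq:Vert X Y Y'}) is the right one; the gap is in the oscillation estimate $\Vert\wt H\Vert_{Y'}\leq\Vert H\Vert_Y$. Your cutoff $\beta$ is supported in a neighbourhood $U$ of $Y$ chosen once and for all (using only $Y\sub U$, $\BAR U\sub\Int Y'$), and on the transition region $\{0<\beta<1\}$, which lies in $Y'$ but outside $Y$, the values of $H$ are not controlled by its values on $Y$: there $\wt H^t(x)=\beta(x)\big(H^t(x)-c(t)\big)$ lies between $0$ and $H^t(x)-c(t)$, but $H^t(x)$ may be arbitrarily large or small compared with $\sup_YH^t$ and $\inf_YH^t$. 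Hence your step ``$\sup_{Y'}\wt H^t-\inf_{Y'}\wt H^t\le\sup_Y(H^t-c(t))^++\sup_Y(H^t-c(t))^-$'' is a non sequitur: the suprema on the right would have to be taken over $\supp\beta$, not over $Y$, and then the bound by $\sup_YH^t-\inf_YH^t$ fails in general. No choice of $c(t)$ repairs this as long as the cutoff region is fixed independently of $H$.

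The fix, and this is exactly where the paper uses compactness of $Y$ beyond the mere existence of a cutoff, is to shrink the cutoff region depending on $H$ and $\eps$: since $H$ is continuous and $[0,1]\x Y$ is compact, there is a compact neighbourhood $K_1$ of $Y$ contained in $\Int Y'$ with $\max_{K_1}H^t\leq\max_YH^t+\eps$ and $\min_{K_1}H^t\geq\min_YH^t-\eps$ for every $t$; choosing $\supp\beta\sub K_1$ and $\beta\const1$ on a smaller compact neighbourhood $K_2$ of $Y$, one obtains $\Vert\wt H\Vert_{Y'}\leq\Vert H\Vert_Y+2\eps$, which suffices since $\eps>0$ is arbitrary (the infimum is unchanged). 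Moreover, the smoothness-in-$t$ issue you flag for $c(t)$ disappears with the paper's choice $c(t):=H^t(x_0)$ for a fixed point $x_0\in Y$: this is automatically smooth in $t$, and since $x_0\in K_1$ the value $0$ lies between $\min_{K_1}(H^t-c(t))$ and $\max_{K_1}(H^t-c(t))$, so extending $\wt H$ by $0$ outside $\supp\beta$ does not enlarge the oscillation beyond that of $H^t$ on $K_1$. With these two modifications (an $H$-dependent neighbourhood and $c(t)=H^t(x_0)$) your argument becomes the paper's proof; the remaining points you make (the flows of $\wt H$ and $H$ agree on $Y$, $\wt H$ is compactly supported so $\phi_{\wt H}^t$ exists and preserves $Y'$) are correct.
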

In the case $X=Y$ we have, by definition,
\[\Vert\cdot\Vert_X^{X,\om}=\Vert\cdot\Vert^{X,\om}.\] 
However, in general, the semi-norms $\Vert\cdot\Vert_X^{Y,\om}$ and $\Vert\cdot\Vert^{X,\om}$ may differ a lot. As an example, a forth-coming article \cite[Corollary 7]{ZiHofer} contains the following result. 
\begin{thm}[Relative Hofer diameter]\label{thm:diam M M'} Let $(M,\om)$ and $(M',\om')$ be connected symplectic manifolds and $X'\sub M'$ a finite subset. Assume that $M$ is closed and $M'$ has positive dimension. Then we have 
\begin{equation}\Vert\cdot\Vert_{M\x X'}^{M\x M',\om\oplus\om'}\const0.
\end{equation}
\end{thm}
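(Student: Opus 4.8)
The statement to prove is that the relative Hofer semi-norm $\Vert\cdot\Vert_{M\x X'}^{M\x M',\om\oplus\om'}$ vanishes identically on $\Ham(M\x X',\om\oplus\om')$; since this object is already asserted to be a semi-norm (Proposition \ref{prop:Vert X Y Y'}), it suffices to show that for every $\phi$ in this group and every $\eps>0$ there is a $\psi\in\Ham(M\x M',\om\oplus\om')$ with $\psi|_{M\x X'}=\phi$ and $\Vert\psi\Vert^{M\x M',\om\oplus\om'}<\eps$. The plan is to reduce to the case where $X'$ is a single point and then exploit the extra room in the $M'$-direction: the key geometric fact is that $M'$ has positive dimension, so a point of $M'$ can be moved along a path while the generating Hamiltonian is supported in an arbitrarily small neighbourhood and has arbitrarily small Hofer norm, because a smooth function localized near a moving point contributes $O(\text{oscillation})$ to $\Vert\cdot\Vert$ but there is no lower bound forcing that oscillation to be large when the displacement is geometrically cheap.

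First I would decompose $X'=\{p_1',\dots,p_k'\}$ and note that $\Ham(M\x X',\om\oplus\om')$ acts on the disjoint union of the slices $M\x\{p_j'\}$; a diffeomorphism $\phi$ in this group is given by Hamiltonian diffeomorphisms $\phi_j$ of $M$ together with a permutation of the $p_j'$ induced by the ambient flow. By the group and restriction properties it is enough to realize each "building block" cheaply: (a) a Hamiltonian diffeomorphism $\phi_j$ of a single slice $M\x\{p_j'\}$, extended by the identity on the other slices, and (b) a permutation of the slices that is the identity on each $M$-factor. For (a), start with any $H\in\HH(M,\om)$ generating $\phi_j$ on $M$; the naive extension $\wt H(t,x,y):=H(t,x)$ has infinite or large Hofer norm on $M\x M'$ because $\sup_{M\x M'}-\inf_{M\x M'}$ over the whole slab is large. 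Instead, use the positive dimension of $M'$: choose a nonconstant smooth function on $M'$ and, following the localization idea behind Proposition \ref{prop:H wt H}, build $\wt H(t,x,y)=\chi(y)\,K(t,x,y)$ where $\chi$ is a cutoff supported near $p_j'$; one then checks the oscillation of $\wt H$ over $M\x M'$ can be made $<\eps$ while the time-one flow still restricts to $\phi_j$ on $M\x\{p_j'\}$ and to the identity on the other slices — the trick is that we only need to \emph{control the flow on the zero-dimensional set $\{p_1',\dots,p_k'\}\subset M'$}, so we may damp the Hamiltonian to be small everywhere by "slowing down time" in a thin shell and accept a flow that is wild off the slices. (Concretely: reparametrize so the flow runs fast only where $y$ is very close to $p_j'$.) For (b), a transposition of two points of $M'$ that drags the corresponding $M$-slices rigidly is generated by a Hamiltonian of the form (function of $y$ alone near an arc joining the two points) $\times$ (cutoff), whose Hofer norm is bounded by the oscillation of a function on a $2$-disk in $M'$, which can be taken $<\eps$ by shrinking the disk — this is the standard fact that points have zero displacement energy, applied in the $M'$-factor.

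The main obstacle is point (a): one must arrange simultaneously that (i) the time-one flow of $\wt H$ on the finitely many slices $M\x\{p_j'\}$ equals the prescribed $\phi$, (ii) $\wt H\in\HH(M\x M',\om\oplus\om')$, i.e. the flow is globally well-defined (here the compactness of $M$ is used, and one needs the cutoffs in the $M'$-direction to not create escape-to-infinity or incompleteness), and (iii) $\Vert\wt H\Vert_{M\x M'}<\eps$. Requirements (i) and (iii) pull in opposite directions under the naive "slab" extension, and the resolution — localizing in $M'$ and reparametrizing time so that large Hamiltonian values occur only on a set of small oscillation-weighted measure — is exactly the content one would need to make precise; I expect this to lean on the explicit extension construction of Proposition \ref{prop:H wt H} together with a time-reparametrization lemma. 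A secondary bookkeeping obstacle is that a general $\phi\in\Ham(M\x X',\om\oplus\om')$ need not split as a product of the building blocks (a) and (b) on the nose, but it does after composing finitely many of them, and since $\Vert\cdot\Vert_X^{Y,\om}$ is a semi-norm the errors add, so $k+O(1)$ blocks each of norm $<\eps/(k+O(1))$ finish the proof.
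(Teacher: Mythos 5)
Two preliminary remarks. First, the paper itself does not prove Theorem \ref{thm:diam M M'}: it is quoted from the forthcoming article \cite{ZiHofer} (Corollary 7), so there is no in-paper proof to compare against, and your proposal has to be judged on its own terms. Second, your reduction step contains a harmless inaccuracy: every element of $\Ham(M\x X',\om\oplus\om')$ automatically preserves each slice $M\x\{p_j'\}$ (along the generating isotopy the induced permutation of the connected components of $M\x X'$ is locally constant in $t$ and equals the identity at $t=0$), so your building block (b), the permutations, never occurs.

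The genuine gap is in your central mechanism for block (a). You propose $\wt H(t,x,y)=\chi(y)K(t,x,y)$ with $\chi$ a cutoff equal to $1$ near $p_j'$, together with ``damping'' and ``reparametrizing time so the flow runs fast only where $y$ is very close to $p_j'$''. This cannot produce small Hofer norm. The norm (\ref{eq:Vert H X}) is the time integral of the spatial oscillation, hence invariant under time reparametrization of the isotopy: concentrating the motion into a short time interval, or into a thin shell in $M'$, does not decrease $\int_0^1\big(\sup-\inf\big)dt$, and damping by $\chi\leq1$ either leaves the Hamiltonian unchanged on the slice (no gain) or destroys the required time-one map there. More decisively, in your construction the Hamiltonian is independent of $y$ near the slice, so the generating isotopy preserves $M\x\{p_j'\}$ at every time; but for \emph{any} $K\in\HH(M\x M',\om\oplus\om')$ whose isotopy preserves the slice and whose time-one map restricts there to $\phi_j$, Lemma \ref{le:H|X} shows that $K^t|_{M\x\{p_j'\}}$ generates $\phi_j$ in $\Ham(M,\om)$, whence $\Vert K\Vert_{M\x M'}\geq\int_0^1\operatorname{osc}_{M\x\{p_j'\}}K^t\,dt\geq\Vert\phi_j\Vert^{M,\om}$, which is a fixed positive number for $\phi_j\neq\id$ by the nondegeneracy in Proposition \ref{prop:Vert}(\ref{prop:Vert:sympl}) (Lalonde--McDuff). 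So, contrary to your guiding sentence, there \emph{is} a lower bound forcing the oscillation to be large for every extension of the type you build; the entire content of the theorem is to beat this bound, which is only possible with generating isotopies that move $M\x X'$ off itself at intermediate times. The expected argument (deferred to \cite{ZiHofer}) does exactly that: factor $\phi_j$ into $k$ pieces of Hofer norm roughly $\Vert\phi_j\Vert^{M,\om}/k$, run the pieces simultaneously with supports lying over disjoint small balls in $M'$ (disjoint supports keep the total oscillation of order $1/k$ instead of summing), and shuttle the slice through these balls by cheap motions purely in the $M'$-factor, so that only the endpoint, not the path, preserves $M\x X'$. Your proposal, which leans on the slice-preserving extension of Proposition \ref{prop:H wt H}, supplies no such mechanism, so the key step is missing and the approach as stated fails.
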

In contrast with this result, under the hypotheses of Theorem \ref{thm:diam M M'}, the absolute semi-norm $\Vert\cdot\Vert^{M\x X',\om\oplus\om'}$ is non-degenerate. This follows from Proposition \ref{prop:Vert}(\ref{prop:Vert:sympl}).

The relative Hofer semi-norm gives rise to the \emph{Hofer diameter of $X$ relative to $Y$}, which we define as
\begin{equation}\label{eq:diam X Y om}\diam(X,Y,\om):=\sup\big\{\Vert\phi\Vert_X^{Y,\om}\,\big|\,\phi\in\Ham(X,\om)\big\}.
\end{equation}
This quantity measures how much Hamiltonian dynamics of $Y$ is captured by the subset $X$. Our main result is motivated by the following instances of Question \ref{q:small}. 
\begin{question}[Hofer diameter of a subset]\label{q:subset} What is the relative Hofer diameter $\diam(X,M,\om)$ for a given (small) closed subset $X\sub M$?
\end{question}
We now fix a subset $X_0\sub M$ and a number $d\in[0,\infty)$. 
\begin{question}[Maximal Hofer diameter]\label{q:diam M om X d} What is the supremum of the numbers $\diam(X,M,\om)$, where $X$ is a compact subset of $X_0$, of Hausdorff dimension at most $d$? 
\end{question}
In order to state our result, we define the map 
\begin{equation}\label{eq:kkk}\kkk:\N\x[0,\infty)\to\N\cup\{\infty\}\end{equation}
as follows. For $(n,d)\in\N\x[0,\infty)$ we define $\kkk(n,d)$ to be the infimum of all sums $\sum_{i=1}^\ell k_i$, where $\ell\in\N$ is such that
\begin{equation}\label{eq:ell 2}\ell\geq2,
\end{equation}
and $k_1,\ldots,k_\ell\in\N$ are integers for which there exist numbers $n_i\in\N$, for $i=1,\ldots,\ell$, such that the following conditions hold:
\begin{eqnarray}\label{eq:n i k i}&n_i\geq k_i,&\\
\label{eq:k i n i d geq}&\sum_ik_in_i\geq n,\quad \sum_ik_i(2n_i-k_i)\leq d.&\\
\label{eq:2 min}&2\min\{n_1,\ldots,n_\ell\}\leq n.&
\end{eqnarray}
Our main result provides lower bounds on the quantities in Questions \ref{q:subset} and \ref{q:diam M om X d} in the case $(M,\om):=(\R^{2n},\om_0)$, with $X$ the unit sphere $S^{2n-1}$ (for Question \ref{q:subset}) and $X_0$ the closed unit ball $\BAR B^{2n}\sub\R^{2n}$:
\begin{thm}[Relative Hofer diameter of a small set]\label{thm:exists X} The following statements hold.
\begin{enui}
\item\label{thm:exists X:i}\label{thm:exists X:S} 
For every integer $n\in\{2,3,\ldots\}$ we have 
\begin{equation}\label{eq:diam S 2n - 1}\diam(S^{2n-1},\R^{2n},\om_0)\geq\frac\pi2.\end{equation}
\item\label{thm:exists X:ii}\label{thm:exists X:k'} For every integer $n\in\{2,3,\ldots\}$ and real number $d\in[n,2n-1]$ there exists a compact subset $X\sub\BAR B^{2n}$ of Hausdorff dimension at most $d+1$, such that 
\begin{equation}\label{eq:diam X}\diam\big(X,\R^{2n},\om_0\big)\geq\frac\pi{\kkk(n,d)}.
\end{equation}
\end{enui}
\end{thm}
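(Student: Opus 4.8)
The plan is to establish both parts by exhibiting, for an appropriate Hamiltonian diffeomorphism $\phi$ of the small set $X$, a lower bound on $\Vert\phi\Vert_X^{\R^{2n},\om_0}$, i.e., a lower bound valid for \emph{every} Hamiltonian $H$ on $\R^{2n}$ whose time-one flow restricts to $\phi$ on $X$. The natural candidate for $\phi$ is (the restriction to $X$ of) a rotation in one of the coordinate planes, say the time-$\frac12$ flow of the harmonic oscillator $\frac12(x_1^2+y_1^2)$; restricted to the circle $S^1\sub\C$ in the first coordinate plane this is the antipodal map, which cannot be the identity. For part \ref{thm:exists X:S} the set $X$ is the unit sphere $S^{2n-1}$; since $n\geq2$, the sphere contains the \emph{whole} circle bundle over $\CP^{n-1}$, and in particular it contains, for each point of $\CP^{n-1}$, a circle on which the Hopf rotation acts. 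The lower bound $\frac\pi2$ should come from a capacity-type / energy-capacity argument: any $H$ realizing $\phi$ must, roughly, displace a Lagrangian (or move a Hamiltonian chord) whose action or whose enclosed area is at least $\frac\pi2$, forcing $\Vert H\Vert_{S^{2n-1}}\geq\frac\pi2$.

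The key steps I would carry out are: (1) Reduce the relative Hofer diameter to a concrete energy estimate: by \eqref{eq:diam X Y om} and \eqref{eq:Vert phi X Y om}, it suffices to find one $\phi\in\Ham(X,\om_0)$ and show $\Vert H\Vert_X\geq\pi/\kkk(n,d)$ for all admissible $H$. (2) Construct $X$ in part \ref{thm:exists X:k'} as a union of $\ell$ pieces dictated by an optimal decomposition $(k_i,n_i)$ achieving (or approaching) the infimum defining $\kkk(n,d)$: on the $i$-th block one takes a product of a sphere $S^{2n_i-1}$ (carrying a Hopf-type rotation) with something of controlled dimension, so that the total Hausdorff dimension is at most $d+1$ — the ``$+1$'' absorbing the extra circle/interval directions needed to link the blocks. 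The dimension count $\sum_i k_i(2n_i-k_i)$ is exactly the dimension of a suitable (complex) Stiefel-type manifold / isotropic-or-symplectic piece, and $\sum_i k_i n_i\geq n$ ensures it embeds in $\R^{2n}$. (3) Run the lower-bound mechanism of \cite{ZiHofer}: the chosen $\phi$ permutes components or rotates in a way that any extension to $\R^{2n}$ must pay Hofer energy at least $\pi$ on at least $\kkk(n,d)$ of the blocks collectively — more precisely, the pigeonhole on the $\ell\geq2$ pieces together with the oscillation estimate $\sup_X H-\inf_X H$ over each rotated circle yields the bound $\pi/\sum k_i = \pi/\kkk(n,d)$.

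I expect the main obstacle to be step (2)–(3) glued together: namely, simultaneously (a) keeping the Hausdorff dimension of $X$ at most $d+1$ while (b) retaining enough ``symplectic rigidity'' in $X$ that a nontrivial $\phi$ forces positive relative energy, and (c) getting the \emph{constant} right, i.e., matching the combinatorial optimum $\kkk(n,d)$ rather than some weaker quantity. The condition \eqref{eq:2 min}, $2\min_i n_i\leq n$, is the signature of a squeezing/embedding constraint (a sphere $S^{2n_i-1}$ of the right size sits in $\R^{2n}$ only when $n_i\leq n/2$ in the relevant normalization), so the proof must weave that inequality into the construction of $X$, not merely into the definition of $\kkk$. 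Handling part \ref{thm:exists X:S} is comparatively soft — it is essentially the case $\ell=2$, $n_1=n_2=n$, $k_1=k_2=1$ of the general mechanism, giving $\kkk=2$ and hence the bound $\frac\pi2$ — but verifying that the sphere genuinely supports the required $\phi$ and that no cheaper extension exists still relies on a Gromov-width or Lagrangian-intersection input that I would import from \cite{ZiHofer}.
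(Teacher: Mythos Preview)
Your outline has the right overall shape --- a specific rotation $\phi$, Stiefel-type pieces governed by the combinatorics of $\kkk(n,d)$, and an energy--capacity input --- but there is a genuine gap at the heart of step~(3). The mechanism you propose (``pigeonhole on the $\ell\geq2$ pieces together with the oscillation estimate $\sup_X H-\inf_X H$ over each rotated circle'') does not work, because the infimum in the definition of $\Vert\phi\Vert_X^{\R^{2n},\om_0}$ is over \emph{all} $H$ with $\phi_H^1|_X=\phi$, and such an $H$ need not respect any circle fibration, nor need $\phi_H^t$ trace out circles in $X$. There is no way to read off $\sup_X H^t-\inf_X H^t$ directly from the motion of circles; an arbitrary extension can scramble the geometry of $X$ wildly while still restricting correctly at time~$1$.

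The paper's argument is different and more delicate. Given an arbitrary extension $\Phi\in\Ham(\R^{2n},\om_0)$ with $\Phi|_X=\phi^{\BAR{z_0}}|_X$, one shows that $\Phi$ must \emph{displace} a nearby regular coisotropic submanifold $\psi^t(X_0)$ (a slightly dilated product of Stiefel manifolds $V(k_i,n_i,a)$) for all small $t>0$; then the coisotropic intersection theorem of \cite{SZSmall} (not \cite{ZiHofer}, which is a forthcoming paper on a different topic) bounds $\Vert\Phi\Vert^{\R^{2n},\om_0}$ below by the split minimal action $A_\Cross(\psi^t(X_0))=a=\pi/\kkk(n,d)$. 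The hard step --- why \emph{every} such $\Phi$ displaces $\psi^t(X_0)$ --- is handled by the ``rigidifying pair'' framework (Lemmas \ref{le:d X limsup} and \ref{le:rigid}): one realizes $X_0$ as the image of $\mu^{-1}(0)$ for a moment map $\mu$ of a product-of-unitary-groups action, and proves that any symplectomorphism fixing $X_0$ pointwise must preserve $df$ along $X_0$ for a suitable $f$ factoring through $\mu$. This pins down the first-order behaviour of $\phi^{z_0}\circ\Phi$ along $X_0$, and a compactness argument (Lemma \ref{le:X 0 phi psi}) then yields the displacement. This rigidifying mechanism is the key idea missing from your proposal. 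Two smaller corrections: the condition $2\min_i n_i\leq n$ is not an embedding constraint but exactly what is needed for the linear $S^1$-action $\phi^z$ (which swaps the first row of the smallest Stiefel block into part of another block) to be defined on $\C^n$; and your guess for part~(\ref{thm:exists X:S}) with $n_1=n_2=n$ violates it --- the paper's choice is $X_0=S^1(\frac\pi2)\times S^{2n-3}(\frac\pi2)$, i.e.\ $n_1=1$, $n_2=n-1$.
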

The estimate (\ref{eq:diam S 2n - 1}) is sharp up to a factor of $16$. This follows from the argument after Proposition \ref{prop:Diam diam} below. The proof of Theorem \ref{thm:exists X} is based on a coisotropic intersection result proved by the authors in \cite{SZSmall}. As another key ingredient, given a pair $(X_0,\al)$, where $X_0\sub M$ is a subset and $\al\in\Om^1(M)$, we will define what it means for $(X_0,\al)$ to be ``rigidifying''. Given a compact subset $X_0$, we will prove a lower bound on the Hofer norm of a certain Hamiltonian diffeomorphism, if there exists a function $f:M\to\R$ for which $(X_0,df)$ is rigidifying and some other conditions hold (Lemma \ref{le:d X limsup} below). We show that these conditions are satisfied if there exists a certain Hamiltonian Lie group action (Lemma \ref{le:rigid}).

The next result summarizes some properties of the map $\kkk$, which occurs in part (\ref{thm:exists X:k'}) of Theorem \ref{thm:exists X}. We define the function $\K:\N\to\N$ by 
\begin{equation}\K(n):=\inf\big\{\sum_{i=1}^\ell k_i\,\big|\,\ell\in\N,\,k_1,\ldots,k_\ell\in\N:\,n=\sum_ik_i^2\big\}.\label{eq:k n inf}
\end{equation}
The first few values of this function are 
\[\begin{array}{rrrrrrrrrrrrrrrrr%%rrr
}
    n=1&2&3&4&5&6&7&8&9&10&11&12&13&14&15&16&17\\ %% &18&19&20\\
\K(n)=1&2&3&2&3&4&5&4&3& 4& 5& 6& 5& 6& 7& 4& 5 %%& 6 & 7& 6
\end{array}
\]
\begin{prop}\label{prop:kkk} For every $n\in\{2,3,\ldots\}$ we have
\begin{equation}\label{eq:kkk n d}\kkk(n,d)\leq2n-d,\quad\forall d\in[n,2n-2].\end{equation}
\begin{eqnarray}\label{eq:k K}&\kkk(n,n)=\K(n),\quad\textrm{if }n\neq k^2,\,\forall k\in\N,&\\
\label{eq:K}&\K(n)<\sqrt n+2^{\frac32}\sqrt[4]n.&
\end{eqnarray}
\end{prop}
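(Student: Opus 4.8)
The plan is to prove Proposition~\ref{prop:kkk} by treating its three assertions separately, each being an exercise in optimizing the combinatorial data $(\ell,k_i,n_i)$ appearing in the definition of $\kkk(n,d)$.

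\textbf{Proof of (\ref{eq:kkk n d}).} I would exhibit an explicit admissible tuple. Fix $d\in[n,2n-2]$ and set $\ell:=2$. The idea is to choose $k_1,k_2$ with $k_1+k_2$ as small as the constraints allow while keeping all of (\ref{eq:n i k i}), (\ref{eq:k i n i d geq}), (\ref{eq:2 min}) satisfiable. A natural first attempt is $k_1=k_2=1$, $n_1=n_2=$ something, but then $\sum k_i n_i = n_1+n_2$ must be $\geq n$ while $\sum k_i(2n_i-k_i)=2(n_1+n_2)-2\leq d$ forces $n_1+n_2\leq (d+2)/2$, which is generally too small. Instead I would take, say, $k_1$ large and $k_2=1$: with $n_1=k_1$ and $n_2$ chosen large, the second constraint in (\ref{eq:k i n i d geq}) reads $k_1^2 + (2n_2-1)\leq d$, the first reads $k_1^2 + n_2\geq n$, and (\ref{eq:2 min}) reads $2\min\{k_1,n_2\}\leq n$. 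Subtracting, $n_2 - (k_1^2 - n) \le n_2 \le (d+1-k_1^2)/2$; combined with $k_1^2+n_2\ge n$ one gets a feasible $n_2$ precisely when $k_1^2 \le$ roughly $2n-d-1$, and then $k_1+k_2 = k_1+1$. Optimizing, the best choice is to make $k_1$ as large as possible; carrying out the (elementary but slightly fiddly) arithmetic and checking all three inequalities hold with room to spare should yield $\kkk(n,d)\le 2n-d$ for $d$ in the stated range. The main care here is verifying (\ref{eq:2 min}) simultaneously with the other two — that is the constraint most easily overlooked.

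\textbf{Proof of (\ref{eq:k K}).} Here I would set $d=n$ and compare the definitions of $\kkk(n,n)$ and $\K(n)$ directly. For the inequality $\kkk(n,n)\le\K(n)$: given an optimal decomposition $n=\sum_{i=1}^\ell k_i^2$ for $\K$, I would try the tuple $n_i:=k_i$. Then (\ref{eq:n i k i}) holds with equality, $\sum k_i n_i=\sum k_i^2=n$ and $\sum k_i(2n_i-k_i)=\sum k_i^2=n=d$, so (\ref{eq:k i n i d geq}) holds with equality; (\ref{eq:2 min}) requires $2\min k_i^2\le n$, which holds unless $\ell=1$ — but $\ell=1$ forces $n=k_1^2$, excluded by hypothesis, and if the optimal $\K$-decomposition has $\ell\ge 2$ then $2\min k_i^2 \le \sum k_i^2 = n$ is automatic. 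So $\kkk(n,n)\le\sum k_i=\K(n)$. For the reverse inequality $\K(n)\le\kkk(n,n)$: given an admissible tuple for $\kkk(n,n)$ with $\sum k_i n_i\ge n$ and $\sum k_i(2n_i-k_i)\le n$, subtracting gives $\sum 2k_i(n_i-k_i)\le \sum k_i(2n_i - k_i) - \sum k_i n_i \cdots$ — more precisely $\sum k_i(2n_i-k_i) - \sum k_i n_i = \sum k_i(n_i-k_i)\ge 0$ combined with $\sum k_i n_i \ge n \ge \sum k_i(2n_i-k_i)$ forces $\sum k_i(n_i-k_i)=0$, hence $n_i=k_i$ for all $i$ (using $n_i\ge k_i$ and $k_i\ge 1$), and then $n\le\sum k_i^2\le n$, so $n=\sum k_i^2$, witnessing $\K(n)\le\sum k_i$. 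The delicate point is handling the case when the optimal $\K$-decomposition might be forced to have $\ell=1$; but that is exactly when $n$ is a perfect square, which is why the hypothesis $n\ne k^2$ is present — I would spell out that under this hypothesis any $\K$-optimal decomposition has $\ell\ge 2$ (since a single square would give $n=k_1^2$).

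\textbf{Proof of (\ref{eq:K}).} This is a pure number-theoretic estimate on $\K(n)=\inf\{\sum k_i : n=\sum k_i^2\}$, independent of symplectic geometry. The plan is: write $n = q^2 m + r$ where $q:=\lfloor n^{1/4}\rfloor$ and $0\le r$ is the remainder after greedily removing as many copies of $q^2$ as possible, i.e. $m:=\lfloor n/q^2\rfloor$ and $r:=n-mq^2$ with $0\le r<q^2$. Then $n = \underbrace{q^2+\cdots+q^2}_{m} + \underbrace{1+\cdots+1}_{r}$, so $\K(n)\le mq + r$. Now bound: $m\le n/q^2$, and since $q\ge n^{1/4}/2$ roughly (more carefully $q > n^{1/4}-1$), $m q \le n/q < n/(n^{1/4}-1)$, while $r < q^2 \le \sqrt n$. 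Combining and simplifying the resulting expression in $n$, together with possibly splitting off low-order terms, should give the clean bound $\K(n) < \sqrt n + 2^{3/2} n^{1/4}$. The main obstacle — and the place requiring genuine care rather than routine algebra — is getting the constant $2^{3/2}$ exactly right: the naive greedy bound gives something like $\sqrt n + O(n^{1/4})$ with a worse constant, so I expect to need a slightly sharper choice (for instance using $q=\lceil n^{1/4}\rceil$ versus $\lfloor n^{1/4}\rfloor$, or a two-level decomposition where the remainder $r<q^2$ is itself written efficiently using squares of size $\approx r^{1/2}\le q$) to pull the constant down to $2^{3/2}$, and then verifying the inequality is strict for all $n\ge 2$ (a small-$n$ check may be needed). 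The other two parts are essentially bookkeeping with the definition; this estimate is where the real work lies.
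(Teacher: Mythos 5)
Your treatment of (\ref{eq:k K}) is correct and coincides with the paper's argument (subtract the two inequalities in (\ref{eq:k i n i d geq}), use $n_i\geq k_i$ to force $n_i=k_i$ and $\sum_ik_i^2=n$; conversely set $n_i:=k_i$, with the hypothesis $n\neq k^2$ guaranteeing $\ell\geq2$). The other two parts, however, have genuine gaps. For (\ref{eq:kkk n d}) your two-summand witness cannot cover the whole range $d\in[n,2n-2]$: with $\ell=2$, $n_1=k_1$, $k_2=1$, an integer $n_2$ compatible with both inequalities in (\ref{eq:k i n i d geq}) and with $n_2\geq1$ exists only if $2n-d-1\leq k_1^2\leq d-1$, so at $d=n$ you would need $k_1^2=n-1$ to be a perfect square. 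Worse, your own subtraction argument shows that for $d=n$ \emph{every} admissible tuple has $n_i=k_i$ and $\sum_ik_i^2=n$, so an $\ell=2$ witness exists only when $n$ is a sum of two squares; already for $n=d=3$ (which lies in $[n,2n-2]$) no two-summand choice works, although $\kkk(3,3)\leq3$ must still be proved. The paper's witness is far simpler and works uniformly: take $\ell:=2n-d$, $k_i:=1$ for all $i$, $n_i:=1$ for $i<\ell$ and $n_\ell:=d-n+1$; then $\sum_ik_in_i=n$, $\sum_ik_i(2n_i-k_i)=d$, $2\min_in_i=2\leq n$, and $\sum_ik_i=2n-d$.

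For (\ref{eq:K}) your greedy scheme is set up with the wrong block size. With $q=\lfloor n^{1/4}\rfloor$ and $m=\lfloor n/q^2\rfloor$ copies of $q^2$, the decomposition costs $mq+r$ with $mq$ of order $n/q\approx n^{3/4}$, which is asymptotically much larger than $\sqrt n+2^{3/2}\sqrt[4]n$; no amount of ``combining and simplifying'' recovers the stated bound from this decomposition, because the decomposition itself is inefficient. The correct greedy takes \emph{one} square of side $\lfloor\sqrt n\rfloor$ (cost at most $\sqrt n$), leaving a remainder $r\leq2\lfloor\sqrt n\rfloor$, which is then treated with squares of side about $\sqrt r\leq\sqrt2\,n^{1/4}$, and so on recursively; the constant $2^{3/2}$ comes from $\sqrt{2\sqrt n}$ together with the lower-order tail, and small values of $n$ need a separate check. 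Note finally that the paper does not reprove this estimate at all: inequality (\ref{eq:K}) is quoted from \cite{SZSmall} (Proposition 8, inequality (36)), so if you want a self-contained proof you must carry out the corrected recursion in full rather than cite it.
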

This proposition implies explicit lower bounds on the right-hand side of inequality (\ref{eq:diam X}). 

To put Theorem \ref{thm:exists X} into perspective, for each open subset $U\sub M$, we define the \emph{extension relative Hofer diameter of $U$} to be
\[\Diam(U,M,\om):=\]
\[\sup\big\{\Vert \phi_H^1\Vert^{M,\om}\,\big|\,H\in\HH(M,\om):\textrm{ support of }H\sub [0,1]\x U\big\}\in[0,\infty]\]
(where $\HH(M,\om):=\HH(M,\om,M)$). This diameter measures the sizes of trivial extensions of Hamiltonian diffeomorphisms generated by functions with support in $[0,1]\x U$. Note that in contrast with this, the definition of $\diam(X,M,\om)$ involves the restriction of a map $\psi:M\to M$ to $X$. The two diameters are related to each other as follows:
\begin{prop}[Relative Hofer diameters]\label{prop:Diam diam} Let $(M,\om)$ be a symplectic manifold, $U\sub M$ an open subset, and $X\sub U$ a compact subset. Then
\begin{equation}\label{eq:Diam diam} \Diam(U,M,\om)\geq\diam(X,M,\om).
\end{equation}
\end{prop}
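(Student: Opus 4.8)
The plan is to take a Hamiltonian diffeomorphism $\psi = \phi_H^1 \in \Ham(X,\om)$ witnessing a value close to $\diam(X,M,\om)$, where $H \in \HH(M,\om,X)$, and to produce from it a Hamiltonian diffeomorphism of $M$ supported in $[0,1]\times U$ whose Hofer norm on $M$ is at least $\Vert\psi\Vert_X^{M,\om} = \Vert\psi\Vert^{X,\om}$ (using $X=Y=M$ here, so the relative and absolute norms on $X$ coincide in the definition of $\diam(X,M,\om)$). The mechanism is a cut-off: since $X$ is compact and $U$ is open with $X \sub U$, choose a smooth function $\chi : M \to [0,1]$ with $\chi \equiv 1$ on a neighbourhood of $X$ and $\supp\chi \sub U$, and replace $H$ by $\wt H(t,\cdot) := \chi \cdot H(t,\cdot)$. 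Then $\supp\wt H \sub [0,1]\times U$, so $\phi_{\wt H}^1$ contributes to the supremum defining $\Diam(U,M,\om)$.

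First I would check that $\wt H$ still generates a globally defined flow on $M$, i.e.\ $\wt H \in \HH(M,\om)$: this follows because $\wt H$ has compact support (being supported in $U$ and cut off), and compactly supported Hamiltonians always generate complete flows that are diffeomorphisms of $M$. Next, the key point is that $\phi_{\wt H}^t$ agrees with $\phi_H^t$ on a neighbourhood of $X$ for all $t$: on the open set where $\chi \equiv 1$ the two Hamiltonian vector fields agree, and since $\phi_H^t(X) = X$ by $X$-compatibility of $X_H$, the trajectories starting in $X$ stay in $X$ (hence in the region where $\chi\equiv1$) for all $t \in [0,1]$; a standard uniqueness-of-ODE-solutions argument then gives $\phi_{\wt H}^t|_X = \phi_H^t|_X$, so in particular $\phi_{\wt H}^1|_X = \phi_H^1|_X = \psi$. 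Consequently $\wt H$ is an admissible competitor in the infimum (\ref{eq:Vert X om}) defining $\Vert\psi\Vert^{X,\om}$, so $\Vert\psi\Vert^{X,\om} \leq \Vert\wt H\Vert_X$. Finally, since $0 \leq \chi \leq 1$, for each $t$ we have $\sup_X \wt H(t,\cdot) - \inf_X \wt H(t,\cdot) = \sup_X H(t,\cdot) - \inf_X H(t,\cdot) \leq \sup_M H(t,\cdot) - \inf_M H(t,\cdot)$ — in fact on $X$ itself $\chi\equiv1$ so $\wt H = H$ there — and more to the point $\Vert\wt H\Vert_X \le \Vert\wt H\Vert_M \le \Vert H\Vert_M$ is not quite what we want; instead we directly estimate $\Vert \phi_{\wt H}^1 \Vert^{M,\om} \le \Vert \wt H \Vert_M$ and must relate $\Vert\wt H\Vert_M$ to $\Vert\psi\Vert^{X,\om}$.

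To close the loop cleanly I would instead argue as follows: given $\eps>0$, pick $H \in \HH(M,\om,X)$ with $\phi_H^1|_X = \psi$ and $\Vert H\Vert_X \le \Vert\psi\Vert^{X,\om} + \eps$; replacing $H$ by $H - \inf_X H(t,\cdot)$ (a time-dependent normalization, harmless for the flow) and then multiplying by the cut-off $\chi$ as above, the resulting $\wt H$ satisfies $\Vert\wt H\Vert_M = \Vert\wt H\Vert_X = \Vert H\Vert_X$ provided $\chi$ also cuts off $H$ before it can exceed its sup/inf on $X$ — which requires choosing $\chi$ supported where $H$ is suitably controlled; the cleaner route is to first shrink the Hamiltonian to be supported near $X$ using that $X_H$ is $X$-compatible, but $H$ need not be small away from $X$. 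The honest fix, which I expect to be the main technical obstacle, is to show one may assume without loss of generality that $\inf_X H(t,\cdot) = 0$ and $H \ge 0$ everywhere after modification, or alternatively to bound $\Vert\phi_{\wt H}^1\Vert^{M,\om}$ below rather than $\Vert\wt H\Vert_M$ above — i.e.\ to use that $\Diam$ is a supremum, so it suffices to exhibit \emph{one} compactly supported $K$ with $\supp K \sub [0,1]\times U$ and $\Vert\phi_K^1\Vert^{M,\om} \ge \Vert\psi\Vert^{X,\om} - \eps$. Taking $K = \wt H$ with the normalization $\min_X H(t,\cdot) = 0$ and noting $\phi_K^1|_X = \psi$ forces $\Vert\phi_K^1\Vert^{M,\om} \ge \Vert\phi_K^1\Vert^{X,\om} = \Vert\psi\Vert^{X,\om}$ (since restriction to $X$ is an admissible map in the infimum defining the latter, applied to $\phi_K^1 \in \Ham(M,\om)$), and this lower bound is exactly what feeds into $\Diam(U,M,\om)$. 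Taking the supremum over $\psi$ and letting $\eps \to 0$ yields (\ref{eq:Diam diam}).
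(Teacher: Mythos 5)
Your cut-off construction is exactly the paper's: $\wt H:=\chi H$ has compact support in $[0,1]\x U$, hence lies in $\HH(M,\om)$, and $\phi_{\wt H}^1|_X=\phi_H^1|_X=\psi$ because the flow of $H$ preserves $X$ and the two vector fields agree near $X$. The gap is in what you do with $\phi_{\wt H}^1$ afterwards. At the outset you assert $\Vert\psi\Vert_X^{M,\om}=\Vert\psi\Vert^{X,\om}$, claiming the relative and absolute norms coincide; they coincide only when $Y=X$, whereas $\diam(X,M,\om)$ is defined via the \emph{relative} semi-norm (\ref{eq:Vert phi X Y om}) with $Y=M$, and the paper emphasizes (Theorem \ref{thm:diam M M'} and the comment after it) that $\Vert\cdot\Vert_X^{M,\om}$ and $\Vert\cdot\Vert^{X,\om}$ can differ drastically. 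This misidentification is what drives you into the normalization difficulties of your middle paragraph, and it makes your closing inequality $\Vert\phi_K^1\Vert^{M,\om}\geq\Vert\phi_K^1|_X\Vert^{X,\om}$ the load-bearing step. That inequality is unjustified: a Hamiltonian $L\in\HH(M,\om)$ generating $\phi_K^1$ with small Hofer norm on $M$ need not satisfy $\phi_L^t(X)=X$ for intermediate times $t$, so it is not a competitor in the infimum (\ref{eq:Vert X om}) defining $\Vert\cdot\Vert^{X,\om}$; indeed, in view of Theorem \ref{thm:diam M M'} (relative norm identically zero while the absolute norm is non-degenerate) such an inequality fails in general.

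The repair is that with the correct (relative) norm no Hofer-norm estimate on $\wt H$ or $H$, and no normalization, is needed at all: since $\phi_{\wt H}^1\in\Ham(M,\om)$ and $\phi_{\wt H}^1|_X=\psi$, the map $\phi_{\wt H}^1$ is itself admissible in the infimum (\ref{eq:Vert phi X Y om}) defining $\Vert\psi\Vert_X^{M,\om}$, so $\Vert\phi_{\wt H}^1\Vert^{M,\om}\geq\Vert\psi\Vert_X^{M,\om}$ directly from the definition; since $\supp\wt H\sub[0,1]\x U$, this gives $\Diam(U,M,\om)\geq\Vert\psi\Vert_X^{M,\om}$, and taking the supremum over $\psi\in\Ham(X,\om)$ yields (\ref{eq:Diam diam}). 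This is precisely how the paper concludes.
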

We denote by $B^{2n}(a)\sub\R^{2n}$ the open ball of radius $\sqrt{a/\pi}$ around 0. It follows from \cite[Corollary 2]{ZiHofer} and a cutoff argument that 
\[\Diam(B^{2n}(a),\R^{2n},\om_0)\leq8a.\] 
(The proof of this result is a variant of an argument by J.-C.~Sikorav.) Combining this with (\ref{eq:Diam diam}), it follows that  
\[\diam(S^{2n-1},\R^{2n},\om_0)\leq8\pi.\]
This shows that the estimate (\ref{eq:diam S 2n - 1}) in Theorem \ref{thm:exists X} is sharp up to a factor of $16$. 
\subsubsection*{Remarks}\label{subsubsec:remarks}
\begin{itemize}
\item{\bf On Theorem \ref{thm:exists X}.} A straight-forward calculation shows that 
\[\diam\big(\R^{2n},\om_0,rX\big)=r^2\diam\big(\R^{2n},\om_0,X\big),\] 
for every $X\sub\R^{2n}$ and $r\in\R$. Hence Theorem \ref{thm:exists X} implies ``rescaled versions'' of itself, e.g., that $\diam\big(S^{2n-1}(a),\R^{2n},\om_0\big)\geq\frac a2$ for every $n\geq2$ and $a>0$. Here $S^{2n-1}(a)\sub\R^{2n}$ denotes the sphere of radius $\sqrt{a/\pi}$ around 0. 

The number $\kkk(n,d)$ occuring in this result is a modified version of a quantity defined in \cite{SZSmall}. 

\item{\bf On further research.} In the subsequent article \cite{ZiHofer} Theorem \ref{thm:diam M M'} will be proved. 

In Section \ref{sec:coiso Hofer} below we will develop a framework for finding lower bounds on relative Hofer diameters. This technique can be exploited in further examples. 

\item{\bf On compact supports.} Analogously to the group $\Ham(X,\om)$ (as defined in (\ref{eq:Ham X om})), one can define the group $\Ham_c(X,\om)$ of Hamiltonian diffeomorphisms on $X$ generated by a compactly supported function. To see that $\Ham(X,\om)$ can be strictly larger than $\Ham_c(X,\om)$, consider the example 
\[M=X:=\R^{2n},\quad\om:=\om_0,\quad\phi^t:\R^{2n}=\C^n\to\C^n,\phi^t(x):=e^{it}x,\] 
for some $t\in\R\in2\pi\Z$. Then $\phi^t\in\Ham(\R^{2n},\om_0)\wo\Ham_c(\R^{2n},\om_0)$. 

For the purpose of this article it seems more natural to consider the group $\Ham(X,\om)$. One argument for this is that $\Ham(M,\om)\wo\Ham_c(M,\om)$ may contain physically relevant maps, as in the above example. (Here $\phi^t$ is the time-$t$ evolution of the harmonic oscillator.)

Furthermore, if we define the displacement energy $e(X,M,\om)$ of a subset $X\sub M$ based on $\Ham(M,\om)$ (see (\ref{eq:e X M om}) below), then there exist triples $(M,\om,X)$, for which $\BAR{X}$ is non-compact and $e(M,\om,X)<\infty$. (Take for example $(M,\om):=(\R^2,\om_0)$ and $X:=\R\x\{0\}$.) In contrast with this, if we base the definition of $e(X,M,\om)$ on $\Ham_c(M,\om)$ instead, then we have to take special care of subsets $X\sub M$ for which $\BAR{X}$ is non-compact.

Note also that unlike $\Ham_c$, $\Ham$ has the nice product property 
\[\phi\x\id\in\Ham\big(M\x M',\om\oplus\om'\big),\quad\forall\phi\in\Ham(M,\om),\]
for arbitrary symplectic manifolds $(M,\om)$ and $(M',\om')$. This gives rise to an estimate for the displacement energy of a product set. 
\end{itemize}
\subsubsection*{Related work}\label{subsubsec:related}
J.-C.~Sikorav proved that for every open subset $U\sub\R^{2n}$ the diameter $\Diam(U,\R^{2n},\om_0)$ is bounded above by $16$ times the proper displacement energy of $U$. (See \cite{Si} or Theorem 10, Section 5.6 in the book \cite{HZ}.)

On the other hand, let $(M,\om)$ be a \emph{closed} symplectic manifold with $\pi_2(M)=0$ and $U\sub M$ a non-empty open subset. Then it follows from the proof of Theorem 1.1.~in the paper \cite{Os} by Y.~Ostrover that $\Diam(U,M,\om)=\infty$. 

The \emph{absolute} Hofer diameter 
\[\diam(M,\om):=\diam(M,\om,M)=\Diam(M,\om,M)\]
has been calculated for many closed symplectic manifolds. In all known examples it is infinite. For a recent overview and references, see the article by D.~McDuff \cite{McD}. 

In \cite{SZSmall} we considered Question \ref{q:small} from a different point of view, obtaining a stable displacement-energy-Gromov-width inequality, non-squeezing results, and existence of a stably exotic structure on $\R^{2n}$. These results are consequences of the key result, Theorem \ref{thm:N phi N} below. They involve functions similar to $\kkk$ (as defined in (\ref{eq:kkk})).
\subsubsection*{Organization of the article}\label{subsubsec:org}
In Section \ref{subsec:proofs:prop:Ham X om,Vert} we start by proving the first parts of Propositions \ref{prop:Ham X om} and \ref{prop:Vert} in a parallel way. Then we do the same for the second parts. In Section \ref{subsec:proof:prop:X iso,Vert X Y Y',kkk,prop:Diam diam} we prove Propositions \ref{prop:X iso}, \ref{prop:Vert X Y Y'}, \ref{prop:kkk}, and \ref{prop:Diam diam}. 

In Section \ref{sec:coiso Hofer} we develop a framework for proving a lower bound on the relative Hofer diameter of some subset, and we prove Theorem \ref{thm:exists X}. In Subsection \ref{subsec:coiso} we state the key result about coisotropic intersections (Theorem \ref{thm:N phi N}), which we proved in the article \cite{SZSmall}. In Subsection \ref{subsec:rigid} we introduce some ``rigidifying property'' and show how this implies a lower bound on the relative Hofer norm of a certain Hamiltonian diffeomorphism (Lemma \ref{le:d X limsup}). We also prove a sufficient criterion for the ``rigidifying property'' (Lemma \ref{le:rigid}). In Subsection \ref{subsec:proof:thm:exists X} we prove Theorem \ref{thm:exists X}.

The appendix contains some auxiliary results about symplectic geometry, point-set topology, and manifolds, which are used in the proofs of the results of Section \ref{sec:mot main}. 
\subsubsection*{Acknowledgements}
A considerable part of the work on this project was done during the second author's stay at the Max Planck Institute for Mathematics, Bonn. He would like to express his gratitude to the MPIM for the invitation and the generous fellowship. 
\section{Proofs of the propositions}\label{sec:proofs}
\subsection{Proofs of Propositions \ref{prop:Ham X om} and \ref{prop:Vert}}\label{subsec:proofs:prop:Ham X om,Vert}

We start by proving the first parts of Propositions \ref{prop:Ham X om} and \ref{prop:Vert} in a parallel way, post-poning the proofs of the second parts to page \pageref{proof:prop:Ham X om:sympl}. %%%  and \pageref{proof:prop:Vert:sympl}. %%% keep, since page might change

We need the following. Let $M$ be a $C^\infty$-manifold and $U\sub[0,1]\x M$ an open subset. We denote by $\pi:TM\to M$ the canonical projection. Let $V:U\to TM$ be a smooth map such that $\pi\circ V(t,x)=x$, for every $(t,x)\in U$. (If $U=[0,1]\x M$ then this means that $V$ is a time-dependent vector field on $M$.) We denote $V^t:=V(t,\cdot)$, for $t\in[0,1]$. We define $\DD_V$ to be the set of all pairs $(t_0,x_0)\in [0,1]\x M$ for which there exists a solution $x\in C^\infty([0,t_0],M)$ of the equations
\begin{equation}\label{eq:x V}x(0)=x_0,\quad (t,x(t))\in U,\quad\dot x(t)=V^t\circ x(t),\quad\forall t\in[0,1].\end{equation}
Furthermore, we define the flow of $V$ to be the map 
\[\DD_V\ni(t_0,x_0)\mapsto \phi_V^{t_0}(x_0):=\phi_V(t_0,x_0):=x(1)\in M,\]
where $x\in C^\infty([0,1],M)$ the unique solution of (\ref{eq:x V}). 

In the following, $(M,\om)$ is a symplectic manifold and $X\sub M$ a closed subset. Let $H,K\in C^\infty([0,1]\x M,\R)$. We define 
\begin{equation}\label{eq:BAR H}\BAR H:\DD_{X_H}\to\R,\quad\BAR H^t:=-H^t\circ\phi_H^t,\end{equation}
\begin{equation}\label{eq:H K}H\# K:\big\{(t,x)\in[0,1]\x M\,\big|\,x\in\phi_H^t(\DD_{X_H}^t)\big\}\to\R,\end{equation}
\[(H\# K)^t:=H^t+K^t\circ(\phi_H^t)^{-1}.\]
It follows from Remark \ref{rem:phi X t} below that the inverse $(\phi_H^t)^{-1}$ exists and hence $H\# K$ is well-defined, and that the domains of the functions $\BAR{H}$ and $H\# K$ are open subsets of $[0,1]\x M$. Their Hamiltonian vector fields are defined on the same sets. 

Let $X\sub M$ be a closed subset. 
\begin{lem}\label{le:BAR H K} If $H,K\in\HH(M,\om,X)$ then we have
\begin{eqnarray}\label{eq:X DD}&X\sub\DD_{X_{\BAR H}}^1,&\\
\label{eq:X DD H K}&X\sub\DD_{X_{H\# K}}^1,&\\
\label{eq:phi BAR H t}&\phi_{\BAR H}^t|_X=\phi_H^t|_X^{-1},\quad\forall t\in[0,1],&\\
\label{eq:phi H K t}&\phi_{H\# K}^t|_X=\phi_H^t\circ\phi_K^t|_X,\quad\forall t\in[0,1].&
\end{eqnarray}
\end{lem}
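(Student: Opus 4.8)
The plan is to establish the four claims in Lemma~\ref{le:BAR H K} by relating the flows of $\BAR H$ and $H\# K$ to the flow of $H$ via explicit curves, and then use the $X$-compatibility hypothesis to transfer the relevant properties. First I would treat the inverse statement \eqref{eq:phi BAR H t}. Fix $x_0\in X$ and, for $t\in[0,1]$, consider the curve $y(s):=\phi_H^{t-s}\big((\phi_H^t)^{-1}(x_0)\big)$ for $s\in[0,t]$; a direct computation using $\BAR H^s=-H^s\circ\phi_H^s$ shows that $y$ solves $\dot y(s)=X_{\BAR H}^s\circ y(s)$ with $y(0)=x_0$. Since $H\in\HH(M,\om,X)$, the point $(\phi_H^t)^{-1}(x_0)$ lies in $X$ (as $\phi_H^t(X)=X$ and $X\subseteq\DD_{X_H}^1$, so $\phi_H^t$ restricts to a bijection of $X$), hence the curve $y$ stays in $X$ and is defined on all of $[0,1]$. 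This simultaneously gives $X\subseteq\DD_{X_{\BAR H}}^1$, proving \eqref{eq:X DD}, and $\phi_{\BAR H}^t(x_0)=y(t)=\phi_H^0\big((\phi_H^t)^{-1}(x_0)\big)=(\phi_H^t|_X)^{-1}(x_0)$, proving \eqref{eq:phi BAR H t}.

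Next I would handle the composition statement \eqref{eq:phi H K t}. Fix $x_0\in X$ and set $z(s):=\phi_H^s\big(\phi_K^s(x_0)\big)$ for $s\in[0,t]$. Here one must be careful that this composition is well-defined: since $K\in\HH(M,\om,X)$, we have $\phi_K^s(x_0)\in X\subseteq\DD_{X_H}^1$, so $\phi_H^s$ is defined at $\phi_K^s(x_0)$, and the result again lies in $X$ by $X$-compatibility of $H$. Differentiating $z$ via the product/chain rule and using the standard identity for the Hamiltonian vector field of $H\# K$ — namely $X_{H\# K}^s = X_H^s + (\phi_H^s)_* X_K^s$, which is exactly the content of the composition rule for Hamiltonian flows — one checks that $z$ solves $\dot z(s)=X_{H\# K}^s\circ z(s)$ with $z(0)=x_0$, and $z$ stays in $X$. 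This yields $X\subseteq\DD_{X_{H\# K}}^1$, proving \eqref{eq:X DD H K}, and $\phi_{H\# K}^t(x_0)=z(t)=\phi_H^t\circ\phi_K^t(x_0)$, proving \eqref{eq:phi H K t}.

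The routine-but-essential background facts I would invoke are: (i) the uniqueness of solutions to the flow ODE, so that exhibiting any curve solving the equation with the right initial condition identifies the flow; (ii) Remark~\ref{rem:phi X t} (cited just before the lemma), which guarantees that $\phi_H^t$ is a diffeomorphism onto its (open) image, so that $(\phi_H^t)^{-1}$ makes sense in the formulas above; and (iii) the elementary computations $\frac{d}{ds}\phi_H^s(y) = X_H^s(\phi_H^s(y))$ and the Hamiltonian-vector-field identities for $\BAR H$ and $H\# K$, which are standard (e.g.\ as in Hofer--Zehnder \cite{HZ}) on the open domains where everything is defined. The one subtlety worth spelling out carefully is that all the curves constructed lie in $X$: this is where $X$-compatibility of $H$ and $K$ enters, and it is what upgrades the local statements about flows on $M$ to the claimed statements about flows defined on all of $[0,1]$ when restricted to $X$.

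I expect the main obstacle to be purely bookkeeping: the flows $\phi_{\BAR H}^t$ and $\phi_{H\# K}^t$ are a priori only defined on the open sets $\DD_{X_{\BAR H}}$ and $\DD_{X_{H\# K}}$, which need not contain $X$ before the argument is run, so one cannot simply write down $\phi_{\BAR H}^t|_X$ at the outset. The correct logical order is therefore to first produce the solution curve lying in $X$ (using only the flow of $H$, which is controlled on $X$ by hypothesis), deduce that $X$ is contained in the time-$1$ domain, and only then read off the identity for the restricted flow. Keeping that order straight, and verifying the Hamiltonian-vector-field identity $X_{H\# K}^s = X_H^s + (\phi_H^s)_* X_K^s$ on the appropriate open set, is the only place where care is genuinely required; the rest is a short computation.
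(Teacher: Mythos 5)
Your overall strategy -- exhibit explicit solution curves for the flows of $\BAR H$ and $H\# K$, use $X$-compatibility of $H$ and $K$ to keep those curves inside $X$ (hence inside the domains where $\BAR H$ and $H\# K$ are defined), and only then read off the domain statements and the flow identities -- is exactly the Hofer--Zehnder argument the paper invokes (its ``proof'' is a one-line citation to \cite[Chapter 5, Proposition 1]{HZ}), and your treatment of \eqref{eq:X DD H K} and \eqref{eq:phi H K t} is correct: the identity $X_{H\# K}^s=X_H^s+(\phi_H^s)_*X_K^s$ holds on $\phi_H^s(\DD_{X_H}^s)$ because $(\phi_H^s)^{-1}$ is symplectic there, and the curve $z(s)=\phi_H^s\circ\phi_K^s(x_0)$ does solve the $H\# K$ flow equation and stays in $X$.

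There is, however, a genuine error in your argument for \eqref{eq:X DD} and \eqref{eq:phi BAR H t}: the curve $y(s)=\phi_H^{t-s}\big((\phi_H^t)^{-1}(x_0)\big)$ does \emph{not} solve $\dot y(s)=X_{\BAR H}^s\circ y(s)$ when $H$ is genuinely time-dependent. Differentiating gives $\dot y(s)=-X_H^{t-s}\circ y(s)$, whereas $X_{\BAR H}^s=-\big(d\phi_H^s\big)^{-1}X_H^s\circ\phi_H^s$ (since $\BAR H^s=-H^s\circ\phi_H^s$ and $\phi_H^s$ is symplectic); already for $H^t(q,p)=f(t)p$ on $\R^2$ the two vector fields along $y$ are $-f(t-s)\dd_q$ and $-f(s)\dd_q$, which disagree. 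Your curve has the correct endpoints but is the flow of the time-reversed Hamiltonian $-H^{t-s}$, not of $\BAR H$, so uniqueness of solutions cannot be invoked to conclude $\phi_{\BAR H}^t(x_0)=y(t)$; for time-dependent flows one also cannot write $\phi_H^t=\phi_H^{t-s}\circ\phi_H^s$. The fix is immediate and keeps the rest of your bookkeeping intact: take instead $y(s):=\big(\phi_H^s|_X\big)^{-1}(x_0)$, which lies in $X\sub\DD_{X_H}^1\sub\DD_{X_H}^s$ because $\phi_H^s(X)=X$ and $\phi_H^s$ is injective (Remark \ref{rem:phi X t}); differentiating the identity $\phi_H^s(y(s))=x_0$ gives $\dot y(s)=-\big(d\phi_H^s(y(s))\big)^{-1}X_H^s\big(\phi_H^s(y(s))\big)=X_{\BAR H}^s(y(s))$, which yields \eqref{eq:X DD} and $\phi_{\BAR H}^s|_X=\phi_H^s|_X^{-1}$ for all $s\in[0,1]$ simultaneously.
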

\begin{proof}[Proof of Lemma \ref{le:BAR H K}] These assertions follow from arguments as in the proof of \cite[Chapter 5, Proposition 1]{HZ}.
\end{proof}
\begin{proof}[Proof of Proposition \ref{prop:Ham X om}(\ref{prop:Ham X om:group})]\setcounter{claim}{0} Let $\phi\in\Ham(X,\om)$. We show that $\phi$ is a bijection on $X$: We choose $H\in\HH(M,\om,X)$ such that $\phi_H^1|_X=\phi$. By Remark \ref{rem:phi X t} below the map $\phi_H^1$ is injective. Furthermore, by the definition of $\HH(M,\om,X)$, we have $\phi(X)=\phi_H^1(X)=X$. It follows that $\phi$ is a bijection from $X$ to itself. 
\begin{claim}\label{claim:phi -1} We have $\phi^{-1}\in\Ham(X,\om)$. 
\end{claim}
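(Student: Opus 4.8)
The plan is to show $\phi^{-1}\in\Ham(X,\om)$ by exhibiting an explicit element of $\HH(M,\om,X)$ whose time-one flow restricts to $\phi^{-1}$ on $X$. Since $\phi\in\Ham(X,\om)$, we may fix $H\in\HH(M,\om,X)$ with $\phi_H^1|_X=\phi$. The natural candidate is the Hamiltonian $\BAR H$ defined in (\ref{eq:BAR H}); recall $\BAR H^t=-H^t\circ\phi_H^t$. The only subtlety is that $\BAR H$ as defined is only a function on the open set $\DD_{X_H}$, not on all of $[0,1]\x M$, so it does not literally lie in $C^\infty([0,1]\x M,\R)$, and hence not a priori in $\HH(M,\om,X)$. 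So the argument has two parts: first, use Lemma \ref{le:BAR H K} to get all the dynamical information we need about $\BAR H$ near $X$; second, replace $\BAR H$ by a genuine globally defined function agreeing with it near $X$.

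For the first part, Lemma \ref{le:BAR H K} (applied with the roles as stated) gives $X\sub\DD_{X_{\BAR H}}^1$ and $\phi_{\BAR H}^t|_X=(\phi_H^t|_X)^{-1}$ for all $t\in[0,1]$. In particular, for each $t$ the map $\phi_{\BAR H}^t$ is defined on $X$ and satisfies $\phi_{\BAR H}^t(X)=(\phi_H^t|_X)^{-1}(X)=X$, using that $\phi_H^t(X)=X$ (from $H\in\HH(M,\om,X)$). Thus $X_{\BAR H}$ is ``$X$-compatible'' in the sense of the definition, and at $t=1$ we get $\phi_{\BAR H}^1|_X=(\phi_H^1|_X)^{-1}=\phi^{-1}$.

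For the second part — which is the real, if minor, obstacle — I would produce $\wt K\in C^\infty([0,1]\x M,\R)$ that agrees with $\BAR H$ on a neighborhood (in $[0,1]\x M$) of the compact... or rather closed set $\{(t,x): t\in[0,1],\ x\in\phi_H^t(X)\}=[0,1]\x X$-type set, and whose Hamiltonian flow still preserves $X$. Concretely: the domain $\DD_{X_{\BAR H}}$ is an open subset of $[0,1]\x M$ containing the closed set $S:=\{(t,x)\in[0,1]\x M : x\in X\}$ (by $X\sub\DD_{X_{\BAR H}}^t$ for all $t$, which follows from (\ref{eq:X DD}) together with Remark \ref{rem:phi X t} on openness of the flow domain in $t$). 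Choose a smooth cutoff function $\beta:[0,1]\x M\to[0,1]$ supported in $\DD_{X_{\BAR H}}$ and equal to $1$ on a neighborhood of $S$, and set $\wt K:=\beta\cdot\BAR H$, extended by $0$ outside the support. Then $\wt K\in C^\infty([0,1]\x M,\R)$, and since $\wt K$ agrees with $\BAR H$ near $S$, the vector fields $X_{\wt K}$ and $X_{\BAR H}$ agree on a neighborhood of $X$ for each $t$; hence any integral curve of $X_{\BAR H}$ starting in $X$ and staying in $X$ is also an integral curve of $X_{\wt K}$. Therefore $X\sub\DD_{X_{\wt K}}^1$, $\phi_{\wt K}^t(X)=X$ for all $t$, so $\wt K\in\HH(M,\om,X)$, and $\phi_{\wt K}^1|_X=\phi_{\BAR H}^1|_X=\phi^{-1}$. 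This proves $\phi^{-1}\in\Ham(X,\om)$, establishing Claim \ref{claim:phi -1}. (The remaining step of part (\ref{prop:Ham X om:group}), closure under composition, would be handled identically using $H\# K$ and (\ref{eq:phi H K t}) in place of $\BAR H$.)
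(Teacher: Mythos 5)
Your proof is correct and follows essentially the same route as the paper: apply Lemma \ref{le:BAR H K} to get $X\sub\DD_{X_{\BAR H}}^1$ and $\phi_{\BAR H}^t|_X=(\phi_H^t|_X)^{-1}$, and then cut off $\BAR H$ near $X$ (the paper does this with a time-independent Urysohn function from Lemma \ref{le:Urysohn}) to obtain an honest element of $\HH(M,\om,X)$ whose flow agrees with that of $\BAR H$ on $X$, hence restricts to $\phi^{-1}$. The only nitpick is that your cutoff $\beta$ should be supported in the domain of definition of $\BAR H$, namely $\DD_{X_H}$, rather than in the flow domain $\DD_{X_{\BAR H}}$, so that $\beta\cdot\BAR H$ is well defined everywhere; this does not affect the argument.
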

In the proof of this claim we will denote by $\Int A$ the interior of a subset $A\sub M$. 
\begin{proof}[Proof of Claim \ref{claim:phi -1}] We define $\BAR H$ as in (\ref{eq:BAR H}). By (\ref{eq:X DD}) (Lemma \ref{le:BAR H K}) we have $X\sub\DD_{X_{\BAR H}}^1$. Since $X$ is closed and $\DD_{X_{\BAR H}}^1$ is open, it follows that there exist closed sets $A_0,A_1$ such that $X\sub\Int A_1$, $M\wo\DD_{X_{\BAR H}}^1\sub\Int A_0$ and $A_0\cap A_1=\emptyset$. By Lemma \ref{le:Urysohn} below there exists a function $f\in C^\infty(M,\R)$ such that $f\const i$ on $A_i$, for $i=0,1$. We define $\wt H:[0,1]\x M\to\R$ by $\wt H^t(x):=f(x)\BAR H^t$, if $x\in A_0$, and $\wt H^t(x):=0$, otherwise. 

Note that $\wt H^t=\BAR H^t$ on $A_1$. Using that $X\sub\Int A_1$, it follows that $X\sub\DD_{X_{\wt H}}^t$ and $\phi_{\wt H}^t|_X=\phi_{\BAR H}^t|_X$, for every $t\in[0,1]$. Combining this with the equality (\ref{eq:phi BAR H t}) of Lemma \ref{le:BAR H K}, it follows that $\phi^{-1}=\phi_{\wt H}^1|_X$. Condition (\ref{eq:phi BAR H t}) implies that $\wt H\in\HH(M,\om,X)$. Hence it follows that $\phi^{-1}\in\Ham(X,\om)$. This proves Claim \ref{claim:phi -1}.
\end{proof}

A similar argument, using (\ref{eq:X DD H K},\ref{eq:phi H K t}) in Lemma \ref{le:BAR H K} shows that $\Ham(M,\om)$ is closed under composition. The statement of Proposition \ref{prop:Ham X om}(\ref{prop:Ham X om:group}) is a consequence of this, Claim \ref{claim:phi -1} and the fact $\id_X\in\Ham(X,\om)$.
\end{proof}
\begin{proof}[Proof of Proposition \ref{prop:Vert}(\ref{prop:Vert:semi})]\setcounter{claim}{0} That the map $\Vert\cdot\Vert^{X,\om}$ is a semi-norm follows from an argument similar to the proof of Proposition \ref{prop:Ham X om}(\ref{prop:Ham X om:group}), using Lemma \ref{le:BAR H K}. Invariance follows from a straight-forward argument. 
\end{proof}
We continue by proving the second parts of Propositions \ref{prop:Ham X om} and \ref{prop:Vert} in a parallel way. We need the following two results. 
\begin{lem}\label{le:H|X} Assume that $X$ is a symplectic submanifold of $M$, and $H\in\HH(M,\om,X)$. Then we have 
\begin{equation}\label{eq:DD X H}\DD_{X^{\om|_X}_{H|_X}}^1=X,\quad\phi_{H|_X,\om|_X}^t=\phi_{H,\om}^t|_X,\,\forall t\in[0,1].\end{equation}
\end{lem}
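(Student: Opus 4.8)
The plan is to show that the Hamiltonian flow of $H$ on $M$, when restricted to $X$, coincides with the intrinsic Hamiltonian flow of $H|_X$ with respect to the induced symplectic form $\om|_X$. The key point is a purely pointwise fact about symplectic submanifolds: if $X$ is a symplectic submanifold of $(M,\om)$ and $Y\in T_xM$ is tangent to $X$ for $x\in X$, then whether $Y$ equals the Hamiltonian vector field of some function at $x$ can be tested either in $M$ or in $X$, and the two tests agree for functions that restrict to the same 1-jet along $X$. More precisely, I would first establish the following linear-algebra lemma: for $x\in X$, a vector $Y\in T_xX$ satisfies $\om(Y,\cdot)=-dH^t(x)$ on $T_xM$ if and only if $\om|_X(Y,\cdot)=-d(H^t|_X)(x)$ on $T_xX$ — in particular, the Hamiltonian vector field $X_H^t(x)$, which by $X$-compatibility is automatically tangent to $X$ along $X$, is itself the intrinsic Hamiltonian vector field $X^{\om|_X}_{H|_X}(x)$.

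\textbf{Key steps.} First I would record that, since $H\in\HH(M,\om,X)$, the vector field $X_H$ is $X$-compatible, so $\phi_H^t(X)=X$ for all $t$; differentiating this relation (or invoking Remark \ref{rem:phi X t}) shows that $X_H^t(x)\in T_xX$ for every $x\in X$ and $t\in[0,1]$. Second, I would prove the linear-algebra lemma above: the crucial input is that $\om|_X$ is nondegenerate on $T_xX$ (this is exactly the hypothesis that $X$ is a symplectic submanifold), so the intrinsic Hamiltonian vector field $X^{\om|_X}_{H|_X}(x)$ is well-defined and uniquely characterized; then one checks that the vector $X_H^t(x)$, which lies in $T_xX$, satisfies the same defining equation $\om|_X(X_H^t(x),\xi)=-d(H^t|_X)(x)\xi$ for all $\xi\in T_xX$, simply because $\om|_X$ and $d(H^t|_X)$ are the restrictions to $T_xX$ of $\om$ and $dH^t$. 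Hence $X_H^t|_X=X^{\om|_X}_{H|_X}$ as vector fields on $X$. Third, with the two vector fields on $X$ now identified, the curve $t\mapsto\phi_H^t(x)$, which stays in $X$, solves the ODE $\dot x=X^{\om|_X}_{H|_X}\circ x$ on the (boundaryless, hence geodesically-complete-enough-for-this-argument) manifold $X$, with initial value $x$; by uniqueness of integral curves it equals $\phi^t_{H|_X,\om|_X}(x)$, and existence for all $t\in[0,1]$ is inherited from the $M$-flow. This gives both $X\sub\DD^1_{X^{\om|_X}_{H|_X}}$ and the flow identity in \eqref{eq:DD X H}; the reverse inclusion $\DD^1_{X^{\om|_X}_{H|_X}}\sub X$ is trivial since $X$ is the whole ambient manifold there.

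\textbf{Main obstacle.} I expect the only subtle point to be the verification that $X_H^t(x)$ is tangent to $X$ along $X$ — this is where $X$-compatibility of $H$ (not merely $H\in C^\infty$) is essential, and it is what makes the restriction $X_H^t|_X$ a genuine vector field on $X$ rather than merely a section of $TM|_X$. Once tangency is in hand, the rest is the elementary observation that restricting a nondegenerate form to a symplectic subspace and restricting the differential of a function are compatible operations, plus standard ODE uniqueness on $X$. I would also take care to state the argument for a fixed $t$ and note that smoothness in $t$ is automatic, so that no additional regularity argument is needed. A minor bookkeeping issue is that $\phi^t_{H|_X,\om|_X}$ is a priori only a partially-defined flow on $X$, so I would phrase the uniqueness argument as: the $M$-integral curve through $x$ restricted to $X$ is an $X$-integral curve of $X^{\om|_X}_{H|_X}$ defined on all of $[0,1]$, hence $x\in\DD^1$ and the flows agree.
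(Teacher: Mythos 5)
Your proposal is correct and follows essentially the same route as the paper: the tangency statement you derive by differentiating $\phi_H^t(X)=X$ is exactly the paper's Lemma \ref{le:T X}, and your identification of $X^\om_{H^t}(x)$ with $X^{\om|_X}_{H^t|_X}(x)$ via nondegeneracy of $\om$ on the symplectic subspace $T_xX$ is the paper's argument, with the concluding ODE-uniqueness step (which the paper leaves implicit) spelled out. The only caveat is that the ``if and only if'' in your plan holds in general only in the direction you actually use, namely that the ambient Hamiltonian vector field, once known to be tangent to $X$, satisfies the intrinsic defining equation.
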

For the proof of Lemma \ref{le:H|X} we need the following result, which will also be used for the proof of Proposition \ref{prop:X iso}.
\begin{lem}\label{le:T X} Assume that $X\sub M$ is a submanifold. Then for every $H\in\HH(M,\om,X)$, $t\in[0,1]$, and $x\in X$, we have
\begin{equation}\label{eq:X H t} X^\om_{H^t}(x)\in T_xX.
\end{equation}
\end{lem}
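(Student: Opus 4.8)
The plan is to exploit the defining property of an $X$-compatible vector field, namely that its flow carries $X$ onto $X$ at every time, and to differentiate the resulting flow lines inside $X$.

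First I would fix $x\in X$ and $s\in[0,1]$, and record that since $X_H$ is $X$-compatible we have $X\sub\DD_{X_H}^1$, hence $X\sub\DD_{X_H}^t$ for every $t\in[0,1]$ (being able to flow up to time $1$ entails being able to flow up to any earlier time), and $\phi_H^t(X)=X$ for every such $t$. In particular the map $\phi_H^s\colon X\to X$ is onto, so there exists a point $x_0\in X$ with $\phi_H^s(x_0)=x$.

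Next I would consider the curve $\gamma\colon[0,1]\to M$, $\gamma(t):=\phi_H^t(x_0)$. This is well defined and smooth because $x_0\in\DD_{X_H}^1$, and it satisfies $\gamma(t)=\phi_H^t(x_0)\in\phi_H^t(X)=X$ for every $t\in[0,1]$. Since $X$ is a submanifold of $M$, a smooth curve in $M$ whose image is contained in $X$ is smooth as a curve in $X$ (a standard fact for embedded submanifolds; cf.\ the appendix), so $\dot\gamma(t)\in T_{\gamma(t)}X$ for every $t$. By the definition of $\gamma$ as a flow line of $X_H$ we have $\dot\gamma(t)=X^\om_{H^t}(\gamma(t))$, and evaluating at $t=s$ gives
\[X^\om_{H^s}(x)=X^\om_{H^s}(\gamma(s))=\dot\gamma(s)\in T_{\gamma(s)}X=T_xX,\]
which is (\ref{eq:X H t}).

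The only point requiring care is the passage from ``$\gamma$ is a smooth $M$-valued curve landing in $X$'' to ``$\gamma$ is a smooth $X$-valued curve''; for embedded submanifolds this is routine, and I would either invoke the corresponding auxiliary lemma in the appendix or give the short local-chart argument. The surjectivity of $\phi_H^s$ on $X$, which is exactly what lets us reach an arbitrary time $s$ rather than only $s=0$, is immediate from the equality $\phi_H^s(X)=X$ in the definition of $X$-compatibility, so no extra work is needed there.
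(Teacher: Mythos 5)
Your proposal is correct and follows essentially the same route as the paper: both arguments differentiate a flow line $t\mapsto\phi_H^t(x_0)$, which stays in $X$ by $X$-compatibility, and use the surjectivity $\phi_H^t(X)=X$ to reach an arbitrary point $x\in X$ at an arbitrary time. The only difference is presentational (you fix $x$ and pull back to $x_0$ first, the paper fixes $x_0$ and invokes surjectivity at the end), and your explicit remark about smooth curves with image in an embedded submanifold being tangent to it fills in a step the paper leaves implicit.
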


\begin{proof}[Proof of Lemma \ref{le:T X}]\setcounter{claim}{0} Let $x_0\in X$. For $t\in[0,1]$ we denote $x^t:=\phi_{H,\om}^t(x_0)$. By definition, we have $\phi_{H,\om}^t(X)=X$ and hence $x^t\in X$, for every $t\in[0,1]$. Let $t\in[0,1]$. It follows that
\[X_{H^t}(x^t)=\frac d{dt}x^t\in T_{x^t}X.\]
Hence, using again $\phi_{H,\om}^t(X)=X$, condition (\ref{eq:X H t}) follows. This proves Lemma \ref{le:T X}.
\end{proof}
\begin{proof}[Proof of Lemma \ref{le:H|X}]\setcounter{claim}{0}\label{proof:le:H|X} Let $x\in X$. Then for every $v\in T_xX$, we have 
\[\om|_X(X^{\om|_X}_{H^t|_X}(x),v)=d(H^t|_X)v=dH^t\,v=\om(X^\om_{H^t}(x),v).\]
Applying Lemma \ref{le:T X} and using the fact that $T_xX$ is a symplectic subspace of $T_xM$, it follows that 
\[X^{\om|_X}_{H^t|_X}(x)=X^\om_{H^t}(x).\]
The statement of Lemma \ref{le:H|X} follows.
\end{proof}
\begin{prop}\label{prop:H wt H} If $X$ is a symplectic submanifold then for every $H\in\HH(X,\om|_X)$ there exists $\wt H\in\HH(M,\om,X)$ such that 
\begin{eqnarray}\label{eq:phi wt H om t}&\phi_{\wt H,\om}^t|_X=\phi_{H,\om|_X}^t,\quad\forall t\in[0,1],&\\
\label{eq:wt H |}&\wt H|_{[0,1]\x X}=H.
\end{eqnarray} 
\end{prop}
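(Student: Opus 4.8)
The plan is to extend the Hamiltonian $H\in\HH(X,\om|_X)$ from $[0,1]\x X$ to all of $[0,1]\x M$ in a way that forces the extended flow to preserve $X$ and to restrict on $X$ to the original flow. First I would observe that, since $X$ is a symplectic submanifold, $\om|_X$ is nondegenerate, so $X^{\om|_X}_{H^t}$ is a genuine time-dependent vector field on $X$ whose time-$t$ flow is defined on all of $X$ (because $H\in\HH(X,\om|_X)$) and is a diffeomorphism of $X$. The key point is to produce an extension $\wt H$ of $H$ whose Hamiltonian vector field $X^\om_{\wt H^t}$, when restricted to points of $X$, is tangent to $X$ and there coincides with $X^{\om|_X}_{H^t}$. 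The criterion of Lemma \ref{le:T X} (read in reverse) and the computation in the proof of Lemma \ref{le:H|X} tell us exactly which first-order behavior of $\wt H$ along $X$ we need: for $x\in X$ and $v\in T_xM$ we want $\om(X^\om_{\wt H^t}(x),v)=d\wt H^t(x)v$, and splitting $T_xM=T_xX\oplus(T_xX)^{\om}$, we need $d\wt H^t(x)|_{T_xX}=d(H^t)$ and we may prescribe $d\wt H^t(x)|_{(T_xX)^{\om}}$ to be zero; this last choice makes $X^\om_{\wt H^t}(x)$ lie in $T_xX$ and match $X^{\om|_X}_{H^t}(x)$.

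Concretely, I would build $\wt H$ in two steps. Using a tubular neighborhood of $X$ in $M$ — more precisely, the symplectic normal bundle $(T X)^{\om}\to X$ together with a symplectomorphism onto a neighborhood $U$ of $X$ identifying $X$ with the zero section (Weinstein's symplectic neighborhood theorem, which applies because $X$ is a symplectic submanifold) — I would define, on $U$, a function $H'$ whose value on the zero section is $H$ and which is ``quadratically flat'' transverse to $X$ in the sense that its differential annihilates the fiber directions along $X$; the simplest choice is to pull $H$ back via the bundle projection $U\to X$, i.e.\ $H'(t,\cdot):=H(t,\cdot)\circ\pr$. Then $dH'^t(x)$ kills $(T_xX)^{\om}$ for $x\in X$ and agrees with $dH^t$ on $T_xX$, which is exactly what we want. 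Next I would cut off: choose a closed neighborhood $A_1$ of $X$ with $A_1\sub U$ and a smooth function $\chi:M\to[0,1]$ (via Lemma \ref{le:Urysohn}) equal to $1$ on a smaller neighborhood of $X$ and supported in $A_1$, and set $\wt H:=\chi H'$ on $U$ and $\wt H:=0$ off $U$. Since $\chi\equiv1$ near $X$, the differential of $\wt H^t$ along $X$ is unchanged, so $X^\om_{\wt H^t}(x)=X^{\om|_X}_{H^t}(x)\in T_xX$ for all $x\in X$. By uniqueness of integral curves, the flow of $\wt H$ starting on $X$ stays on $X$ and there equals the flow of $H$; since the latter is globally defined on $X$ for all $t\in[0,1]$, we get $X\sub\DD_{X_{\wt H}}^t$ and $\phi_{\wt H,\om}^t(X)=X$, hence $\wt H\in\HH(M,\om,X)$, together with \eqref{eq:phi wt H om t}. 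Finally, $\wt H|_{[0,1]\x X}=\chi|_X\cdot H=H$ because $\chi\equiv1$ on $X$, giving \eqref{eq:wt H |}.

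The main obstacle is the transition between the model neighborhood and $M$: one must make sure the cutoff does not disturb the first-order matching along $X$, and that the flow genuinely cannot leave $X$ — this is where closedness of $X$ and the openness of $\DD_{X_{\wt H}}^t$ (Remark \ref{rem:phi X t}) are used, exactly as in the proof of Claim \ref{claim:phi -1}. A secondary technical point is verifying that $\phi_{H,\om|_X}^t$ is defined on all of $X$ and is a diffeomorphism for each $t\in[0,1]$; this is immediate from $H\in\HH(X,\om|_X)$ by the very definition of $\HH(X,\om|_X)=\HH(X,\om|_X,X)$ given after \eqref{eq:Ham X om}. Everything else — the symplectic neighborhood theorem, Urysohn-type cutoffs, and uniqueness of ODE solutions — is standard and invoked from the cited results.
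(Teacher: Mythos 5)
Your proposal is correct and follows essentially the same route as the paper: extend $H$ by pulling it back along the projection of a tubular neighborhood of $X$ modeled on $TX^\om$, cut off with a smooth Urysohn-type function equal to $1$ near $X$, check that along $X$ the Hamiltonian vector field of the extension is tangent to $X$ and equals $X^{\om|_X}_{H^t}$ (this is the paper's Lemma \ref{le:X H r}, applied to the retraction $r=\pi\circ\psi^{-1}$), and conclude via uniqueness of integral curves that the flow stays in $X$ and restricts to $\phi_{H,\om|_X}^t$. The only cosmetic difference is that you invoke Weinstein's symplectic neighborhood theorem, whereas the paper only needs an ordinary tubular neighborhood with normal bundle $TX^\om$ and the first-order normalization of Proposition \ref{prop:tubular}.
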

In the proof of this result we will use the following notation. Let $(V,\om)$ be a symplectic vector space and $W\sub V$ a linear subspace. We denote
\begin{equation}\label{eq:W om}W^\om:=\big\{v\in V\,\big|\,\om(v,w)=0,\,\forall w\in W\big\}.\end{equation}
\begin{proof}[Proof of Proposition \ref{prop:H wt H}]\setcounter{claim}{0}\label{proof:prop:H wt H} Let $H\in\HH(X,\om|_X)$. By Proposition \ref{prop:tubular} below (applied with $N:=X$) there exists an embedding $\psi:E:=TX^\om:=\bigcup_{x\in X}T_xX^\om\to M$ satisfying the conditions (\ref{eq:psi N},\ref{eq:d psi x v}). We define $U:=\psi(E)$. This is an open subset of $M$ containing $X$. 

Since $M\wo U$ and $X$ are closed and do not intersect, there exists a pair of closed subsets $A_0,A_1\sub M$ such that $M\wo U\sub \Int(A_0)$, $X\sub\Int(A_1)$, and $A_0\cap A_1=\emptyset$. We choose a function $f$ as in Lemma \ref{le:Urysohn} below. We denote by $\pi:TX^\om\to X$ the canonical projection, and define 
\[\wt H:[0,1]\x M\to\R,\quad \wt H(t,x):=\left\{\begin{array}{ll}
f(x)H\big(t,\pi\circ\psi^{-1}(x)\big),&\textrm{if }x\in\psi(E),\\
0,&\textrm{otherwise.}
\end{array}\right.\]
This function is smooth and satisfies equality (\ref{eq:wt H |}). We define 
\[r:=\pi\circ\psi^{-1}:\Int A_1\to X.\] 
It follows from (\ref{eq:psi N},\ref{eq:d psi x v}) and our choice $E=TX^\om$ that this is a smooth retraction onto $X$, satisfying $\ker dr(x)=T_xX^\om$, for every $x\in X$. Let $t\in[0,1]$. Then we have $H^t\circ r=\wt H^t$ on $\Int A_1$. Hence Lemma \ref{le:X H r} below implies that $X^\om_{\wt H^t}(x)=X^{\om|_X}_{H^t}(x)$, for every $x\in X$. It follows that $\wt H\in\HH(M,\om,X)$ and equality (\ref{eq:phi wt H om t}) holds. This proves Proposition \ref{prop:H wt H}.
\end{proof}
We are now ready for the proofs of the remaining parts of Propositions \ref{prop:Ham X om} and \ref{prop:Vert}.

\begin{proof}[Proof of Proposition \ref{prop:Ham X om}(\ref{prop:Ham X om:sympl})]\setcounter{claim}{0}\label{proof:prop:Ham X om:sympl} We show the inclusion  ``$\sub$'' in {\bf (\ref{eq:Ham X om |})}: Let $\phi\in\Ham(X,\om)$. Choosing $H\in\HH(M,\om,X)$ such that $\phi_H^1|_X=\phi$, the inclusion  ``$\sub$'' is a consequence of Lemma \ref{le:H|X}.

The inclusion ``$\cont$'' in (\ref{eq:Ham X om |}) is a consequence of Proposition \ref{prop:H wt H}. This completes the proof of Proposition \ref{prop:Ham X om}(\ref{prop:Ham X om:sympl}).
\end{proof}

\begin{proof}[Proof of Proposition \ref{prop:Vert}(\ref{prop:Vert:sympl})]\setcounter{claim}{0}\label{proof:prop:Vert:sympl} We show the inequality ``$\geq$'' in {\bf (\ref{eq:Vert X om|})}: Let $\phi\in\Ham(X,\om)$. Let $H\in\HH(M,\om,X)$ be such that $\phi_H^1|_X=\phi$. By Lemma \ref{le:H|X} the conditions (\ref{eq:DD X H}) hold. By the definition of $\Vert\cdot\Vert^{X,\om|_X}$, it follows that
\[\Vert\phi\Vert^{X,\om|_X}\leq\Vert H|_X\Vert_X=\Vert H\Vert_X.\] 
It follows that $\Vert\phi\Vert^{X,\om|_X}\leq\Vert\phi\Vert^{X,\om}$. This proves inequality ``$\geq$'' in (\ref{eq:Vert X om|}).

The inequality ``$\leq$'' in {\bf (\ref{eq:Vert X om|})} is a consequence of Proposition \ref{prop:H wt H}. 

It remains to show that $\Vert\cdot\Vert^{X,\om}$ is non-degenerate, i.e., {\bf condition (\ref{eq:non-deg})} holds. By (\ref{eq:Vert X om|}) it suffices to prove the following claim.
\begin{claim}\label{claim:non-deg} If $X=M$ then condition (\ref{eq:non-deg}) holds. 
\end{claim}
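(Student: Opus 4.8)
The plan is to deduce what remains of Proposition \ref{prop:Vert}(\ref{prop:Vert:sympl}) — namely non-degeneracy of $\Vert\cdot\Vert^{M,\om}$, which by $(\ref{eq:Vert X om|})$ is exactly Claim \ref{claim:non-deg} — from the positivity of the displacement energy of a non-empty open set, i.e.\ from a form of the energy--capacity inequality.

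Suppose, for contradiction, that $\phi\in\Ham(M,\om)$ satisfies $\phi\neq\id$ and $\Vert\phi\Vert^{M,\om}=0$. By Proposition \ref{prop:Ham X om} the map $\phi$ is a homeomorphism of $M$, and since $\phi\neq\id$ there is a point $p\in M$ with $\phi(p)\neq p$. Using that $M$ is Hausdorff I would pick disjoint open sets $U\ni p$ and $V\ni\phi(p)$, and then a Darboux chart around $p$ furnishing an open ball $B\sub U\cap\phi^{-1}(V)$ with $\BAR B$ compact; then $\phi(\BAR B)\cap\BAR B=\emptyset$, so $\phi$ displaces $\BAR B$.

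Now fix $\eps>0$. Since $\Vert\phi\Vert^{M,\om}=0$ there is $H\in\HH(M,\om)$ with $\phi_H^1=\phi$ and $\Vert H\Vert_M<\eps$; replacing $H^t$ by $H^t-\inf_MH^t$ (a function of $t$ only, which changes neither $X_H$, nor its flow, nor $\Vert H\Vert_M$) I may assume $H\geq0$ with $\inf_MH^t=0$ for every $t$. The set $\mathcal K:=\bigcup_{t\in[0,1]}\phi_H^t(\BAR B)$ is compact, being the continuous image of $[0,1]\x\BAR B$ under the flow map. Using a smooth Urysohn function as in Lemma \ref{le:Urysohn} I would choose $\chi\in C^\infty(M,[0,1])$ with compact support which is $\const1$ on an open neighbourhood of $\mathcal K$, and set $\wt H:=\chi H$. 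Then $\wt H\in\HH(M,\om)$ has compact support; since $X_{\wt H}=X_H$ on $\{\chi\const1\}$ while every $H$-trajectory starting in $\BAR B$ stays in $\mathcal K$, uniqueness of solutions of the flow equation gives $\phi_{\wt H}^t|_{\BAR B}=\phi_H^t|_{\BAR B}$ for all $t\in[0,1]$; in particular $\phi_{\wt H}^1$ still displaces $\BAR B$. Finally $0\leq\wt H\leq H$ pointwise, hence $\inf_M\wt H^t=0$ and $\sup_M\wt H^t\leq\sup_MH^t$, so $\Vert\wt H\Vert_M\leq\Vert H\Vert_M<\eps$.

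Thus for every $\eps>0$ there is a compactly supported Hamiltonian of Hofer norm $<\eps$ whose time-one flow displaces the non-empty open set $B$, forcing the displacement energy $e(B,M,\om)$ to vanish. This contradicts the positivity of the displacement energy of a non-empty open subset of a symplectic manifold (a consequence of the energy--capacity inequality; for a Darboux ball it follows from Hofer--Zehnder capacity theory, see \cite{HZ}). This contradiction proves Claim \ref{claim:non-deg}, completing the proof of Proposition \ref{prop:Vert}(\ref{prop:Vert:sympl}). The only genuinely deep input is this last cited fact; the one step requiring care is verifying that the cut-off Hamiltonian $\wt H$ has the same flow as $H$ on $\BAR B$, which rests on the invariance of $\mathcal K$ under the $H$-flow together with ODE uniqueness.
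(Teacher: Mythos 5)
Your argument is in substance the same as the paper's: produce a compactly supported Hamiltonian diffeomorphism that agrees with enough of $\phi$ to displace a small compact set, then invoke the energy--capacity inequality. The paper outsources your hand-rolled cut-off to Lemma \ref{le:phi psi} (quoted from \cite{SZSmall}) and applies Lalonde--McDuff \cite[Theorem 1.1]{LMGeo} directly, obtaining the quantitative bound $\Vert\phi\Vert^{M,\om}\geq\frac12w(U)>0$ instead of arguing by contradiction; the deep input is identical.

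Two steps in your write-up need repair, though neither affects the strategy. First, the normalization ``replace $H^t$ by $H^t-\inf_MH^t$'' is not legitimate as stated: $\Vert H\Vert_M<\infty$ only forces $\inf_MH^t>-\infty$ for almost every $t$, and even where finite the function $t\mapsto\inf_MH^t$ is in general only upper semicontinuous, so the ``normalized'' Hamiltonian need not be smooth, hence need not lie in $\HH(M,\om)$. The fix is the one the paper itself uses in the proof of Proposition \ref{prop:Vert X Y Y'}: subtract $H^t(x_0)$ for a fixed point $x_0\in M$ (this is smooth in $t$), and set $\wt H^t:=\chi\cdot\big(H^t-H^t(x_0)\big)$; since $\inf_MH^t\leq H^t(x_0)\leq\sup_MH^t$ and $0\leq\chi\leq1$, the oscillation of $\wt H^t$ over $M$ is still at most $\sup_MH^t-\inf_MH^t$, so $\Vert\wt H\Vert_M\leq\Vert H\Vert_M<\eps$, and the flow on $\BAR B$ is unchanged exactly as in your ODE-uniqueness argument. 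Second, for a general symplectic manifold the positivity of the displacement energy of a nonempty open set is not covered by \cite{HZ}, which treats $(\R^{2n},\om_0)$; a Darboux-chart reduction does not apply, because your displacing diffeomorphism $\phi_{\wt H}^1$, while compactly supported, is not supported in the chart. The correct reference is precisely the one the paper uses, Lalonde--McDuff \cite[Theorem 1.1]{LMGeo}, applied to $\phi_{\wt H}^1$; with that citation your argument closes.
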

For the proof of this claim we denote by $w(U):=w(U,\om|_U)$ the Gromov width of an open subset $U\sub M$. 

\begin{pf}[Proof of Claim \ref{claim:non-deg}] Let $\id\neq\phi\in\Ham(M,\om)$. Let $H\in\HH(M,\om,M)$ be such that $\phi_H^1=\phi$. We choose $x_0\in M$ such that $\phi(x_0)\neq x_0$ and an open neighborhood $U$ of $x_0$ with compact closure, such that $\phi(\BAR{U})\cap\BAR{U}=\emptyset$. 

Let $\eps>0$. By Lemma \ref{le:phi psi} below there exists $\psi\in\Ham_c(M,\om)$ such that $\psi|_{\BAR{U}}=\phi|_{\BAR{U}}$ and condition (\ref{eq:psi K d c}) holds. By a result by D.~McDuff and F.~Lalonde \cite[Theorem 1.1]{LMGeo} we have
\[\Vert\psi\Vert^{M,\om}_c\geq \frac12w(U).\]
Combining this with (\ref{eq:psi K d c}), and using that $\eps>0$ is arbitrary, it follows that $\Vert\phi\Vert^{M,\om}\geq \frac12w(U)>0$. This proves Claim \ref{claim:non-deg} and completes the proof of Proposition \ref{prop:Vert}(\ref{prop:Vert:sympl}).
\end{pf}\end{proof}
\subsection{Proofs of Propositions \ref{prop:X iso}, \ref{prop:Vert X Y Y'}, \ref{prop:kkk}, and \ref{prop:Diam diam}} \label{subsec:proof:prop:X iso,Vert X Y Y',kkk,prop:Diam diam}
\begin{proof}[Proof of Proposition \ref{prop:X iso}]\setcounter{claim}{0} Let $H\in\HH(M,\om,X)$. It suffices to show that for every path $x\in C^\infty([0,1],X)$ and $t\in[0,1]$ we have
\begin{equation}\label{eq:H t x 0} H^t\circ x(0)=H^t\circ x(1).
\end{equation}
To see this, we fix such a pair $(x,t)$. We have, for every $s\in[0,1]$, 
\begin{equation}\label{eq:d ds}\frac d{ds}(H^t\circ x)(s)=dH^t\dot x(s)=\om\big(X_{H^t}\circ x(s),\dot x(s)\big).\end{equation}
By Lemma \ref{le:T X} we have $X_{H^t}\circ x(s)\in T_{x(s)}X$, for every $s\in[0,1]$. Since $X$ is isotropic, it follows that the last expression in (\ref{eq:d ds}) vanishes. Hence (\ref{eq:d ds}) implies (\ref{eq:H t x 0}). This proves Proposition \ref{prop:X iso}.
\end{proof}
\begin{proof}[Proof of Proposition \ref{prop:Vert X Y Y'}]\setcounter{claim}{0} We prove the {\bf first statement}. Conditions (\ref{eq:one},\ref{eq:g -1}) (with $\Vert\cdot\Vert:=\Vert\cdot\Vert_X^{Y,\om}$) follow from straight-forward arguments. To see that condition (\ref{eq:gh}) holds, let $\phi_1,\phi_2\in\Ham(X,\om)$. Without loss of generality assume that $\Vert\phi_i\Vert_X^{Y,\om}<\infty$, for $i=1,2$. Let $\eps>0$. By definition of $\Vert\phi_i\Vert_X^{Y,\om}$ there exist maps $\psi_1,\psi_2\in\Ham(Y,\om)$ such that 
\begin{equation}\label{eq:psi i X}\psi_i|_X=\phi_i,\quad\quad\Vert\psi_i\Vert^{Y,\om}<\Vert\phi_i\Vert_X^{Y,\om}+\eps,\end{equation}
for $i=1,2$. We have $\psi:=\psi_1\circ\psi_2\in\Ham(Y,\om)$ and $\psi|_X=\phi_1\circ\phi_2$. It follows that
\begin{eqnarray}\nn\Vert\phi_1\circ\phi_2\Vert_X^{Y,\om}&\leq&\Vert\psi\Vert^{Y,\om}\\
\nn&\leq&\Vert\psi_1\Vert^{Y,\om}+\Vert\psi_2\Vert^{Y,\om}\\
\nn&<&\Vert\phi_1\Vert_X^{Y,\om}+\Vert\phi_2\Vert_X^{Y,\om}+2\eps,
\end{eqnarray}
where in the last inequality we used (\ref{eq:psi i X}). Since $\eps>0$ is arbitrary, the triangle inequality 
\[\Vert\phi_1\circ\phi_2\Vert_X^{Y,\om}\leq\Vert\phi_1\Vert_X^{Y,\om}+\Vert\phi_2\Vert_X^{Y,\om}\]
follows. This proves (\ref{eq:gh}).

Let $Y'$ as in the hypothesis of the {\bf second part of the proposition}, and $\phi\in\Ham(X,\om)$. The second statement is a consequence of the following claim. 
\begin{claim}\label{claim:eps H'} For every $\psi\in\Ham(Y,\om)$ satisfying $\psi|_X=\phi$ and every $\eps>0$, there exists $\psi'\in\Ham(Y',\om)$ such that
\begin{eqnarray}\label{eq:psi' Y}&\psi'|_X=\phi,&\\
\label{eq:Vert psi'}&\Vert\psi'\Vert^{Y',\om}<\Vert\psi\Vert^{Y,\om}+3\eps.&
\end{eqnarray}
\end{claim}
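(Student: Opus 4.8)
The plan is to establish Claim \ref{claim:eps H'}, from which the inequality \eqref{eq:Vert X Y Y'} follows immediately: given $\psi \in \Ham(Y,\om)$ with $\psi|_X = \phi$ and $\Vert\psi\Vert^{Y,\om}$ close to $\Vert\phi\Vert_X^{Y,\om}$, the claim produces $\psi' \in \Ham(Y',\om)$ with $\psi'|_X = \phi$ and $\Vert\psi'\Vert^{Y',\om}$ only slightly larger, so that $\Vert\phi\Vert_X^{Y',\om} \le \Vert\psi'\Vert^{Y',\om} < \Vert\psi\Vert^{Y,\om} + 3\eps$; taking the infimum over $\psi$ and letting $\eps \to 0$ gives \eqref{eq:Vert X Y Y'}. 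So the real content is the claim. First I would unwind the definitions: pick $K \in \HH(M,\om,Y)$ with $\phi_K^1|_Y = \psi$ and $\Vert K\Vert_Y < \Vert\psi\Vert^{Y,\om} + \eps$. The issue is that $K$ is $Y$-compatible but need not be $Y'$-compatible — its flow may push points of $Y'$ around wildly, or even fail to be defined on all of $Y'$. The strategy is the same cutoff trick used in the proofs of Proposition \ref{prop:Ham X om}\eqref{prop:Ham X om:group} and Proposition \ref{prop:H wt H}: modify $K$ outside a neighbourhood of $Y$ so that the modified Hamiltonian is $Y'$-compatible, without changing the flow on $Y$ and without increasing the Hofer norm on $Y'$ by much.

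Concretely: since $Y$ is compact and contained in $\Int Y'$, and since $Y \subseteq \DD^1_{X_K}$ with $\DD^1_{X_K}$ open, one can choose (using Lemma \ref{le:Urysohn}) a compactly supported cutoff function $\chi \in C^\infty(M,[0,1])$ with $\chi \equiv 1$ on a neighbourhood $U$ of $Y$ with $\BAR U \subseteq \Int Y'$, and with $\supp\chi$ contained in a slightly larger set on which the time-$t$ flow of $K$ is defined for all $t \in [0,1]$ and keeps $\supp\chi$ inside $\Int Y'$ — here one shrinks $U$ and uses continuity of the flow and compactness. Then set $K' := \chi K$. Because $\chi \equiv 1$ near $Y$, the Hamiltonian vector fields of $K$ and $K'$ agree on a neighbourhood of $Y$, so $\phi_{K'}^t|_Y = \phi_K^t|_Y$ for all $t$, giving $\phi_{K'}^1|_Y = \psi$ and hence $\psi' := \phi_{K'}^1|_{Y'}$ restricts to $\phi$ on $X$. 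Because $K'$ has compact support lying in a region swept by the flow into $\Int Y'$, one checks $\phi_{K'}^t(Y') = Y'$ for all $t$ (points of $Y'$ outside $\supp\chi$ are fixed; points inside stay inside), so $K' \in \HH(M,\om,Y')$ and $\psi' \in \Ham(Y',\om)$. Finally $\Vert K'\Vert_{Y'} = \Vert\chi K\Vert_{Y'} \le \sup|\chi| \cdot (\text{something})$ — more carefully, on $Y'$ we have $0 \le \chi \le 1$, so $\sup_{Y'}(K')^t - \inf_{Y'}(K')^t \le \sup_{Y'}(K^t)^+ + \sup_{Y'}(-K^t)^+$, which after possibly first normalizing $K$ by an additive time-dependent constant (which changes neither the flow nor $\Vert K\Vert_Y$, and can be chosen so that $\inf_Y K^t \le 0 \le \sup_Y K^t$) can be bounded by $\Vert K\Vert_Y + (\text{error from }Y'\wo Y)$; the remaining subtlety is controlling $K$ on $Y' \wo Y$, which one handles by additionally multiplying by a cutoff supported so close to $Y$ that the extra contribution is $< \eps$, at the cost of shrinking $U$ once more. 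This yields $\Vert\psi'\Vert^{Y',\om} \le \Vert K'\Vert_{Y'} < \Vert K\Vert_Y + \eps < \Vert\psi\Vert^{Y,\om} + 2\eps < \Vert\psi\Vert^{Y,\om} + 3\eps$.

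The main obstacle is the interplay between two competing requirements on the cutoff region: it must be large enough that $\chi \equiv 1$ on a full neighbourhood of $Y$ (so the flows agree on $Y$) and that $\supp\chi$ is flow-invariant into $\Int Y'$ (so $K'$ is genuinely $Y'$-compatible), yet small enough that the values of $K$ on $\supp\chi \wo Y$ do not inflate the Hofer norm measured over $Y'$. Reconciling these is exactly the kind of point-set argument deferred to the appendix, and I expect it to rely on: (i) compactness of $Y$ together with openness of $\DD^1_{X_K}$ to get a flow-invariant compact neighbourhood of $Y$ inside $\Int Y'$ (a "flow box" type argument, or invariance of $\DD^1_{X_K}$ under the flow as in Remark \ref{rem:phi X t}), and (ii) uniform continuity of $K$ to make the $Y' \wo U$ contribution small as $U \downarrow Y$. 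The rest — smoothness of $K'$, the composition/inverse compatibility, and the three $\eps$'s bookkeeping — is routine and parallels Propositions \ref{prop:Ham X om} and \ref{prop:H wt H} closely enough that I would only sketch it.
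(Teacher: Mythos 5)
Your proposal follows essentially the same route as the paper's proof: cut the Hamiltonian off on nested compact neighbourhoods of $Y$ inside $\Int Y'$, subtract a smooth time-dependent constant (the paper uses $H^t(x_0)$ for a fixed $x_0\in Y$) so that the cutoff does not inflate the oscillation on $Y'$, and shrink the cutoff region so that, by continuity and compactness of $[0,1]\x Y$, its oscillation exceeds that over $Y$ by at most $2\eps$, giving $\Vert H'\Vert_{Y'}<\Vert\psi\Vert^{Y,\om}+3\eps$. The only superfluous ingredient is your requirement that the flow of the original Hamiltonian $K$ be defined on $\supp\chi$ and keep it inside $\Int Y'$: once $K':=\chi K$ is compactly supported in $\Int Y'$, its flow is globally defined, fixes the complement of that support pointwise and hence preserves $Y'$, which is all that is needed.
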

\begin{pf}[Proof of Claim \ref{claim:eps H'}] Assume that $\psi$ and $\eps$ are as above. We choose a function $H\in\HH(M,\om,Y)$ such that 
\begin{equation}\label{eq:phi H 1 Y}\phi_H^1|_Y=\psi,\quad\Vert H\Vert_Y<\Vert\psi\Vert^{Y,\om}+\eps.
\end{equation}
Since by hypothesis, $Y$ is compact and contained in $\Int Y'$, there exists a compact neighborhood $K_0$ of $Y$ that is contained in $\Int Y'$. We choose a compact neighborhood $K_1$ of $Y$ that is contained in $K_0$ and satisfies 
\begin{equation}\label{eq:max min}\max_{K_1}H^t\leq\max_YH^t+\eps,\quad\min_{K_1}H^t\geq\min_YH^t-\eps,\end{equation}
for every $t\in[0,1]$. Furthermore, we choose a compact neighborhood $K_2$ of $Y$ that is contained in $\Int K_1$. By Lemma \ref{le:Urysohn} below there exists a function $f\in C^\infty(M,[0,1])$ such that $f\const1$ on $K_2$ and $f\const0$ on $M\wo\Int K_1$. We choose a point $x_0\in Y$ and define $H':[0,1]\x M\to\R$ by 
\[H'(t,x):={H'}^t(x):=f(x)\big(H^t(x)-H^t(x_0)\big).\]
The support of this function is contained in $[0,1]\x K_1$ and hence compact. Hence its Hamiltonian flow exists on $M$. We define $\psi':=\phi_{H'}^1|_{Y'}$. It follows that $\psi'\in\Ham(Y',\om)$. For each $t\in[0,1]$, the functions ${H'}^t$ and $H^t$ differ on $K_2$ by the constant $H^t(x_0)$. Since $\phi_H^1|_Y=\psi$, $\psi|_X=\phi$ and $X\sub \Int K_2$, equality (\ref{eq:psi' Y}) follows.

Using that $f\leq1$, $f\const0$ on $M\wo K_1$, and inequalities (\ref{eq:max min}), we have, for every $t\in[0,1]$,
\begin{eqnarray*}\sup_{Y'}{H'}^t-\inf_{Y'}{H'}^t&\leq&\max_{K_1}(H^t-H^t(x_0))-\min_{K_1}(H^t-H^t(x_0))\\
&\leq&\max_YH^t-\min_YH^t+2\eps.
\end{eqnarray*}
It follows that 
\[\Vert H'\Vert_{Y'}\leq\Vert H\Vert_Y+2\eps.\]
Combining this with the inequality $\Vert\psi'\Vert^{Y',\om}\leq\Vert H'\Vert_{Y'}$ and the inequality in (\ref{eq:phi H 1 Y}), inequality (\ref{eq:Vert psi'}) follows. Hence $\psi'$ has the required properties. This proves Claim \ref{claim:eps H'} and hence the second statement, and completes the proof of Proposition \ref{prop:Vert X Y Y'}.
\end{pf}\end{proof}
\begin{proof}[Proof of Proposition \ref{prop:kkk}]\setcounter{claim}{0} {\bf Inequality (\ref{eq:kkk n d})} follows by taking $\ell:=2n-d$, $k_i:=1$, for $i=1,\ldots,\ell$, $n_i:=1$, for $i=1,\ldots,\ell-1$, and $n_\ell:=d-n+1$. 

Let $n\in\N$ be such that $n\neq k^2$, for every $k\in\N$. {\bf Inequality ``$\geq$'' in (\ref{eq:k K})} is a consequence of the next claim. Let $\ell\geq2$ and $k_1,\ldots,k_\ell$ be as in the definition of $\kkk(n,n)$.
\begin{claim}\label{claim:sum k i 2 n} We have
\begin{equation}\label{eq:sum k i 2 n}\sum_{i=1}^\ell k_i^2=n.
\end{equation}
\end{claim}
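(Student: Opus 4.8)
\textbf{Proof proposal for Claim \ref{claim:sum k i 2 n}.}

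The plan is to show that the two inequalities in (\ref{eq:k i n i d geq}), combined with the constraint (\ref{eq:n i k i}) and the hypothesis that $n$ is not a perfect square, force $n_i = k_i$ for every $i$, whence $\sum_i k_i^2 = \sum_i k_i n_i \geq n$ and $\sum_i k_i^2 = \sum_i k_i(2n_i - k_i) \leq d = n$, giving (\ref{eq:sum k i 2 n}). First I would record the elementary inequality $k_i(2n_i - k_i) \geq k_i n_i$, which holds precisely because $n_i \geq k_i$ (it rearranges to $k_i(n_i - k_i) \geq 0$). Summing over $i$ and using the two bounds in (\ref{eq:k i n i d geq}) with $d = n$ yields
\[
n \geq \sum_i k_i(2n_i - k_i) \geq \sum_i k_i n_i \geq n,
\]
so all these quantities equal $n$; in particular $\sum_i k_i(2n_i - k_i) = \sum_i k_i n_i$, i.e. $\sum_i k_i(n_i - k_i) = 0$. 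Since each summand $k_i(n_i - k_i)$ is nonnegative (as $k_i \geq 1$ and $n_i \geq k_i$), every summand vanishes, and as $k_i \geq 1$ this forces $n_i = k_i$ for all $i$.

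Having $n_i = k_i$ for all $i$, the chain of equalities above gives $\sum_i k_i^2 = \sum_i k_i n_i = n$, which is exactly (\ref{eq:sum k i 2 n}). Note that at this point the hypothesis that $n$ is not a perfect square has not yet been used; it is not needed for the claim itself, but it is what makes the claim useful for the surrounding argument: together with (\ref{eq:ell 2}) it guarantees $\ell \geq 2$ is consistent (a single square $n = k^2$ is excluded), so that $\kkk(n,n)$ and $\K(n)$ are computed over the same index sets and inequality ``$\geq$'' in (\ref{eq:k K}) follows by comparing the defining infima.

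I do not expect a serious obstacle here; the only thing to be careful about is the direction of the inequality $k_i(2n_i - k_i) \geq k_i n_i$ and the fact that $2\min\{n_1,\ldots,n_\ell\} \leq n$ (condition (\ref{eq:2 min})) plays no role in this particular claim and should not be invoked. The remaining subtlety, to be handled when deducing ``$\geq$'' in (\ref{eq:k K}) from the claim, is that the infimum defining $\K(n)$ is taken with no lower bound on $\ell$, so any decomposition $n = \sum_i k_i^2$ admissible for $\kkk(n,n)$ (which additionally requires $\ell \geq 2$ and (\ref{eq:2 min})) is in particular admissible for $\K(n)$, giving $\K(n) \leq \kkk(n,n)$ — the inequality asserted. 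The reverse inequality ``$\leq$'' in (\ref{eq:k K}), presumably treated separately, would use that a minimal decomposition for $\K(n)$ can be taken with $\ell \geq 2$ when $n$ is not a square and automatically satisfies (\ref{eq:2 min}) after reordering, since $\min n_i = \min k_i = 1 \leq n/2$ once $n \geq 2$.
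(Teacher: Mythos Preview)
Your proof of the claim is correct and follows the same route as the paper: both derive $\sum_i k_i(n_i-k_i)\leq 0$ from the two inequalities in (\ref{eq:k i n i d geq}) with $d=n$, conclude $n_i=k_i$ from (\ref{eq:n i k i}), and then read off $\sum_i k_i^2=n$. Your sandwich $n\geq\sum_i k_i(2n_i-k_i)\geq\sum_i k_i n_i\geq n$ is just a repackaging of the paper's subtraction step; the only caveat is that your closing aside about the ``$\leq$'' direction asserts $\min k_i=1$, which need not hold (e.g.\ $n=8=2^2+2^2$), but this lies outside the claim itself.
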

\begin{proof}[Proof of Claim \ref{claim:sum k i 2 n}] We choose integers $n_1,\ldots,n_\ell$ such that the inequalities (\ref{eq:n i k i},\ref{eq:k i n i d geq},\ref{eq:2 min}) are satisfied. Subtracting the first from the second inequality in (\ref{eq:k i n i d geq}), we obtain $\sum_ik_i(n_i-k_i)\leq0$. Using the inequalities (\ref{eq:n i k i}), it follows that $n_i=k_i$, for every $i=1,\ldots,\ell$. Combining this with (\ref{eq:k i n i d geq}), the equality (\ref{eq:sum k i 2 n}) follows. This proves Claim \ref{claim:sum k i 2 n}.
\end{proof}

We show {\bf inequality ``$\leq$'' in (\ref{eq:k K})}: Let $\ell\in\N$ and $k_1,\ldots,k_\ell\in\N$ be as in the definition of $\K(n)$. This means that $\sum_{i=1}^\ell k_i^2=n$. Our hypothesis that $n\neq k^2$, for every $k\in\N$, implies that the condition (\ref{eq:ell 2}) is satisfied.

We define $n_i:=k_i$, for $i=1,\ldots,\ell$. The conditions (\ref{eq:n i k i},\ref{eq:k i n i d geq}) are satisfied with $d=n$. Furthermore, using that $\ell\geq2$, it follows that (\ref{eq:2 min}) holds. Inequality ``$\leq$'' in (\ref{eq:k K}) follows. This proves (\ref{eq:k K}).

{\bf Inequality (\ref{eq:K})} was proved in \cite{SZSmall} (Proposition 8, inequality (36)). This completes the proof of Proposition \ref{prop:kkk}.
\end{proof}
\begin{proof}[Proof of Proposition \ref{prop:Diam diam}]\setcounter{claim}{0} Assume that $c\in\big[0,\diam(X,M,\om)\big)$. By definition there exists $\phi\in\Ham(X,\om)$ such that $\Vert\phi\Vert_X^{M,\om}\geq c$. We choose a function $H\in\HH(M,\om,X)$ such that $\phi_H^1|_X=\phi$. We also choose a  function $\rho\in C^\infty(\R^{2n},[0,1])$ with compact support contained in $U$, such that $\rho\const1$ in some neighborhood $V\sub M$ of $X$. We define $\wt H:[0,1]\x M\to\R$ by $\wt H(t,x):=\rho(x)H(t,x)$. 

Note that the support of $\wt H$ is compact and contained in $U$. Furthermore, we have $\phi_{\wt H}^1|_X=\phi$. It follows that 
\[\Diam(M,\om,U)\geq\Vert\phi_{\wt H}^1\Vert_M\geq\Vert\phi\Vert_X^{M,\om}\geq c.\]
Since $c<\diam(X,M,\om)$ is arbitrary, the inequality (\ref{eq:Diam diam}) follows. This proves Proposition \ref{prop:Diam diam}.
\end{proof}
\section{Coisotropic intersections and relative Hofer diameters}\label{sec:coiso Hofer}
This section is the core of the article. We develop a framework for proving a lower bound on the relative Hofer diameter of a set. We use this to prove the main result, Theorem \ref{thm:exists X}, in Section \ref{subsec:proof:thm:exists X}. The method described here is of interest in its own, since it can be used to prove similar results in different settings. 
\subsection{Coisotropic intersections}\label{subsec:coiso}
The proof of Theorem \ref{thm:exists X} is based on the following result about coisotropic intersections, which we proved in \cite{SZSmall}. To state it, let $(M,\om)$ be a symplectic manifold. We call it \emph{(weakly geometrically) bounded} \label{bounded} iff there exist an almost complex structure $J$ on $M$ and a complete Riemannian metric $g$ such that the following conditions hold:
\begin{itemize}
\item The sectional curvature of $g$ is bounded and $\inf_{x\in M}\iota^g_x>0$, where $\iota^g_x$ denotes the injectivity radius of $g$ at the point $x\in M$. 
\item There exists a constant $C\in(0,\infty)$ such that 
\[|\om(v,w)|\leq C|v|\,|w|,\quad\om(v,Jv)\geq C^{-1}|v|^2,\]
for every $v,w\in T_xM$ and $x\in M$. Here $|v|:=\sqrt{g(v,v)}$.
\end{itemize}
This is a mild condition on $(M,\om)$. (For examples see \cite{SZSmall}.)

Recall that a submanifold $N$ of $M$ is called \emph{coisotropic} iff for $x\in N$ the subspace
\[T_xN^\om=\big\{v\in T_xM\,\big|\,\om(v,w)=0,\,\forall w\in T_xN\big\}\]
of $T_xM$ is contained in $T_xN$. As an example, $N$ is coisotropic if it is a hypersurface.

Let $N\sub M$ be a coisotropic submanifold. We denote
\[N_\om:=\big\{\textrm{isotropic leaf of }N\big\},\]
and define the \emph{action spectrum} and the \emph{minimal action} of $(M,\om,N)$ as
\begin{equation}\label{eq:S}S(M,\om,N):=\left\{\int_\D u^*\om\,\bigg|\,u\in C^\infty(\D,M):\,\exists F\in N_\om:\, u(S^1)\sub F\right\},\end{equation}
\begin{equation}\label{eq:A M om N} A(M,\om,N):=\inf\big(S(M,\om,N)\cap (0,\infty)\big)\in[0,\infty].
\end{equation}
(Here our convention is that $\inf\emptyset:=\infty$.) We define the \emph{split minimal symplectic action of $N$, $A_{\Cross}(M,\om,N)$} as follows. We define a \emph{bounded splitting of $(M,\om,N)$} to be a tuple %%% check new line %%% keep comment
$(M_i,\om_i,N_i)_{i=1,\ldots,k}$, where $k\in\N$ and for every $i=1,\ldots,k$, $(M_i,\om_i)$ is a bounded symplectic manifold and $N_i\sub M_i$ a coisotropic submanifold, such that there exists a symplectomorphism $\phi$ from $\big(\Cross_{i=1}^kM_i,\oplus_{i=1}^k\om_i\big)$ to $(M,\om)$, satisfying $\phi\big(\Cross_{i=1}^kN_i\big)=N$.

We define 
\begin{equation}\label{eq:A Cross}A_\Cross(N):=A_{\Cross}(M,\om,N):=
\end{equation}
\[\sup\big\{\min_{i=1,\ldots k}A(M_i,\om_i,N_i)\,\big|\,(M_i,\om_i,N_i)_i\textrm{ bounded splitting of }(M,\om,N)\big\}.\]
Here our convention is that $\sup\emptyset=0$. 

\noindent{\bf Remark.}~If $(M,\om)$ is not bounded then $(M,\om,N)$ does not admit any bounded splitting, and therefore $A_{\Cross}(M,\om,N)=0$. This follows from the facts that a finite product of bounded symplectic manifolds is bounded, and boundedness is invariant under symplectomorphisms. $\Box$

We call a coisotropic submanifold $N\sub M$ \emph{regular} iff its isotropy relation is a closed subset and a submanifold of $N\x N$. Equivalently, the symplectic quotient of $N$ is well-defined. (For more details and examples see \cite{ZiLeafwise}.) We abbreviate $\HH(M,\om):=\HH(M,\om,M)$ and define the \emph{displacement energy} of a subset $X\sub M$ to be 
\begin{equation}\label{eq:e X M om}e(X,M):=e(X,M,\om):=\inf\big\{\Vert H\Vert\,\big|\,H\in\HH(M,\om)\,\big|\,\phi_H^1(X)\cap X=\emptyset\big\}.
\end{equation}
We call a manifold \emph{closed} iff it is compact and its boundary is empty. We are now able to formulate the key result. 
\begin{thm}[Coisotropic intersections, see \cite{SZSmall} (Theorem 1)]\label{thm:N phi N}Let $(M,\om)$ be a symplectic manifold and $\emptyset\neq N\sub M$ a closed connected regular coisotropic submanifold. Then we have
\begin{equation}\label{eq:e A}e(N,M)\geq A_{\Cross}(N).
\end{equation}
\end{thm}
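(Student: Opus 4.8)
The goal is to prove the coisotropic intersection estimate $e(N,M)\geq A_\Cross(N)$, where $N$ is a closed connected regular coisotropic submanifold. The plan is to reduce the problem to the displacement energy of a Lagrangian submanifold via the symplectic quotient, and then to apply a leafwise intersection / holomorphic-curve argument in a bounded symplectic manifold.

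\begin{proof}[Proof proposal]
First I would dispose of the trivial case: if $(M,\om)$ admits no bounded splitting of $(M,\om,N)$, then $A_\Cross(N)=0$ and the inequality is vacuous. So I may assume a bounded splitting $(M_i,\om_i,N_i)_{i=1,\dots,k}$ is given, and after applying the symplectomorphism $\phi$ it suffices to bound $e(N,M)$ from below by $\min_i A(M_i,\om_i,N_i)$. Since displacement energy is monotone under products --- a Hamiltonian displacing $N=\Cross_i N_i$ in $\Cross_i M_i$ restricts, via slicing, to give displacement information in at least one factor --- the key step is to prove the inequality for a \emph{single} bounded factor: if $(M,\om)$ is bounded and $N\sub M$ is a closed connected regular coisotropic submanifold, then $e(N,M)\geq A(M,\om,N)$. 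The reduction to the factorwise statement is the part I expect to require the most care, because one must show that displacing the whole product forces one of the coordinate projections of the Hamiltonian flow to displace a leaf-wise chunk of $N_i$ inside $M_i$ with controlled energy; this is where the ``regular'' hypothesis (existence of the symplectic quotient) and the connectedness of the leaves get used.

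For the single bounded factor, I would pass to the symplectic quotient. Because $N$ is regular, the space of isotropic leaves carries a symplectic structure $(\underline{M},\underline{\om})$, and $N$ is the total space of a fibration $N\to\underline N$; locally $N$ looks like $\underline N\x F$ with $F$ the leaf. A Hamiltonian isotopy of $M$ displacing $N$ from itself gives, by a standard argument (cf.\ the leafwise-intersection literature and the displacement-energy bounds of Chekanov and of Lalonde--McDuff), control on leafwise intersection points: if $e(N,M)<A(M,\om,N)$, then for any Hamiltonian $H$ with $\Vert H\Vert<A(M,\om,N)$ the time-one map $\phi_H^1$ must have a leafwise intersection point with $N$ --- a point $x\in N$ with $\phi_H^1(x)$ in the same isotropic leaf as $x$. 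One then contradicts $\phi_H^1(N)\cap N=\emptyset$. The existence of such a leafwise intersection point is exactly the kind of statement proved by counting holomorphic strips/disks with boundary on $N$ and on $\phi_H^1(N)$; the action threshold $A(M,\om,N)$ (the infimum of positive symplectic areas of disks whose boundary circle lies in a single leaf) is precisely the energy bound below which no such strip can break, forcing a genuine intersection. Boundedness of $(M,\om)$ is what supplies the compactness (monotonicity / a priori energy and $C^0$ bounds away from infinity) needed to run this Floer/holomorphic-curve machinery on a possibly non-compact $M$.

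The main obstacle, then, is the leafwise intersection theorem in the bounded (non-compact) setting with the sharp action bound $A(M,\om,N)$: one needs a version of Floer homology (or of Gromov's compactness combined with a continuation/monotonicity argument) for the pair $(N,\phi_H^1(N))$ that is valid for a closed coisotropic $N$ sitting inside a bounded symplectic manifold, and that detects an intersection point as soon as the Hofer norm of $H$ stays below the minimal disk area through a leaf. Assuming this analytical input --- which is the technical heart of \cite{SZSmall} --- the rest is bookkeeping: combine the single-factor bound with the product/monotonicity reduction, take the supremum over all bounded splittings, and conclude $e(N,M)\geq A_\Cross(N)$. I would also double-check the degenerate subcases ($A(M,\om,N)=\infty$, or $N$ Lagrangian so that leaves are points and the statement becomes the classical Lagrangian displacement-energy bound) to make sure the conventions $\inf\emptyset=\infty$ and $\sup\emptyset=0$ are handled consistently throughout.
\end{proof}
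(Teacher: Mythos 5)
There is a genuine gap, and it sits exactly where you flagged the most care would be needed: the reduction of the split statement to a single bounded factor. Displacing the product $N=\Cross_i N_i$ inside $\Cross_i M_i$ does \emph{not} force any coordinate projection to displace anything in a single factor: the displacing Hamiltonian need not be split, and for a given $x\in N$ only \emph{some} coordinate of $\phi_H^1(x)$ leaves the corresponding $N_i$, with the bad coordinate depending on $x$. There is no ``slicing'' argument here; indeed the phenomenon of stable displaceability (sets that become displaceable, with small energy, after taking a product) shows that a factorwise lower bound of this kind cannot be extracted by soft means. Since the whole point of $A_\Cross$ (as opposed to $A$) is to beat the fact that the action spectrum of a product is the \emph{sum} of the spectra --- so that $A$ of the product can be tiny even when each factor has large minimal action --- your plan of proving the bounded one-factor inequality $e(N,M)\geq A(M,\om,N)$ and then assembling the product case collapses at this assembly step. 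Note also that the one-factor statement is essentially \cite[Theorem 1]{ZiLeafwise}, and the paper's Remark \ref{rmk:} explains that this weaker, non-split bound would not even suffice for the intended application.

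For comparison, the present paper does not reprove the theorem; it is quoted from \cite{SZSmall}, and the proof there goes through a Lagrangian embedding rather than a leafwise Floer theory for $N$ itself: regularity gives the symplectic quotient $\underline{N_i}$ of each $N_i$, and $\Cross_iN_i$ embeds as a closed Lagrangian in $\Cross_i\big(M_i\x\underline{N_i}\big)$ equipped with $\oplus_i\big(\om_i\oplus(-\underline{\om_i})\big)$; a Hamiltonian displacing $N$ in $M$ transfers to one displacing this Lagrangian with the same Hofer norm, and Chekanov's Main Theorem \cite{Ch} (this is where boundedness of each factor enters) bounds the displacement energy below by the minimal area of holomorphic disks with boundary on the Lagrangian. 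Choosing a split almost complex structure, every such disk has nonnegative area in each factor and area at least $A(M_i,\om_i,N_i)$ in some factor; this componentwise positivity of holomorphic area is precisely the analytic substitute for the invalid topological slicing, and it is where the quantity $\min_iA(M_i,\om_i,N_i)$, hence $A_\Cross(N)$ after taking the supremum over bounded splittings, actually comes from. Your single-factor heuristic (quotient, leafwise intersection, energy threshold $A$) is in the right spirit, but without the Lagrangian-embedding-plus-split-$J$ mechanism (or some equivalent) the split lower bound claimed in the theorem is not reached.
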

The proof of this result (see \cite{SZSmall}) is based on a certain Lagrangian embedding of $N$ and on the Main Theorem in the article \cite{Ch} by Y.~Chekanov.

The idea of proof for part (\ref{thm:exists X:S}) of Theorem \ref{thm:exists X} is to find a Hamiltonian flow $[0,1]\x\R^{2n}\ni(t,x)\mapsto\phi^t(x)\in\R^{2n}$ that preserves $S^{2n-1}$, such that the following holds. Let $({\phi'}^t)$ be a Hamiltonian flow generated by some function in $\HH(\R^{2n},\om_0,S^{2n-1})$, such that ${\phi'}^1|_{S^{2n-1}}=\phi^1|_{S^{2n-1}}$. Then there exists a regular closed coisotropic submanifold $\emptyset\neq N\sub\R^{2n}$ such that
\[A_\Cross(N)\geq\frac\pi2,\quad{\phi'}^1(N)\cap N=\emptyset.\]
It then follows from Theorem \ref{thm:N phi N} that 
\[\Vert\phi^1|_{S^{2n-1}}\Vert^{\R^{2n},\om_0}_{S^{2n-1}}\geq\frac\pi2.\]
The claimed inequality (\ref{eq:diam S 2n - 1}) is a consequence of this. 

In the following subsection we will put this idea into a more general framework, which we will use for the proofs of both parts of Theorem \ref{thm:exists X}.
\subsection{Rigidifying pairs}\label{subsec:rigid}
Let $(M,\om)$ be a symplectic manifold. In this subsection, given a compact subset of $M$ and a Hamiltonian $S^1$-action on $M$, we construct a pair $(X,\phi)$, where $X\sub M$, and $\phi\in\Ham(X,\om)$, and we prove a lower bound on the Hofer norm of $\phi$ on $X$ relative to $M$. This is a key ingredient of the proof of Theorem \ref{thm:exists X}.

Let $X_0\sub M$ be a subset, and $\al\in\Om^1(M)$. We call the pair $(X_0,\al)$ \emph{rigidifying} iff for every symplectomorphism $\phi:M\to M$ the following holds. If $\phi|_{X_0}=\id_{X_0}$ then, for every $x\in X_0$ and $v\in T_xM$, we have
\begin{equation}\label{eq:al phi x}\al\,d\phi v=\al v.
\end{equation}
As an example, $(X_0,\al)$ is rigidifying if $X_0$ is open.

Let $S^1\x M\ni(z,x)\mapsto\phi^z(x)\in M$ be a Hamiltonian action. We fix a compact subset $X_0\sub M$, and define
\begin{equation}\label{eq:X}X:=\bigcup_{z\in S^1}\phi^z(X_0).\end{equation}
This is a compact subset of $M$. Let $z_0\in S^1\sub\C$. We denote by $\BAR{z_0}$ its complex conjugate. Note that $\phi^{\BAR{z_0}}|_X\in\Ham(X,\om)$. The next result gives a lower bound on $\Vert\phi^{\BAR{z_0}}|_X\Vert_X^{M,\om}$ (defined as in (\ref{eq:Vert X Y om})), if there exists a suitable rigidifying one-form $\al$ for $X_0$. Recall the definition (\ref{eq:e X M om}) of the displacement energy $e(X,M)=e(X,M,\om)$ of a subset $X\sub M$. 
\begin{lem}[Main Lemma]\label{le:d X limsup} Let $[0,1]\x M\ni(t,x)\mapsto\psi^t(x)\in M$ be a smooth map satisfying
\begin{equation}\label{eq:psi 0 id}\psi^0=\id.
\end{equation}
Assume that there exists a function $f\in C^\infty(M,\R)$ such that the pair $(X_0,df)$ is rigidifying, and
\begin{eqnarray}\label{eq:X phi Bar z 0}&f(X_0)\sub[0,\infty),\quad f\circ\phi^{z_0}(X_0)\sub(-\infty,0],&\\ 
\label{eq:dd t f psi t}&\left.\frac d{dt}\right|_{t=0}(f\circ\psi^t(x))>0,\,\,\left.\frac d{dt}\right|_{t=0}(f\circ\phi^{z_0}\circ\psi^t(x))\leq0,\,\,\forall x\in X_0.&
\end{eqnarray}
Then we have (with $X$ as in (\ref{eq:X}))
\begin{equation}\label{eq:d X limsup}\Vert\phi^{\BAR{z_0}}|_X\Vert_X^{M,\om}\geq\limsup_{t\searrow0}e(\psi^t(X_0),M,\om). 
\end{equation}
\end{lem}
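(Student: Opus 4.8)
The plan is to bound $\Vert\phi^{\BAR{z_0}}|_X\Vert_X^{M,\om}$ from below by passing to an arbitrary $\psi\in\Ham(X,\om)$ with $\psi|_X=\phi^{\BAR{z_0}}|_X$, realized by $\psi=\phi_H^1|_X$ for some $H\in\HH(M,\om,X)$, and then for each small $t>0$ producing a Hamiltonian diffeomorphism of $M$ that displaces $\psi^t(X_0)$ and whose Hofer norm is controlled by $\Vert H\Vert_X$. Concatenating the path generating $\psi$ (run forward, pushing $X$ back to itself via $\phi^{\BAR{z_0}}$) with the path $\phi^{sz_0}$, $s\in[0,1]$, and reparametrizing by $\psi^t$, the idea is to build a loop-like comparison showing that, up to an error tending to $0$ with $t$, the set $\psi^t(X_0)$ gets moved off itself by a Hamiltonian of energy $\le\Vert H\Vert_X$.

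More concretely, first I would fix $H\in\HH(M,\om,X)$ with $\phi_H^1|_X=\phi^{\BAR{z_0}}|_X$ and $\Vert H\Vert_X$ close to $\Vert\phi^{\BAR{z_0}}|_X\Vert_X^{M,\om}$. The composite symplectomorphism $g:=\phi^{z_0}\circ\phi_H^1$ restricts to the identity on $X_0$ (since $\phi_H^1|_X=\phi^{\BAR z_0}|_X$ and $X_0\subseteq X$), so by the rigidifying hypothesis $df\,dg\,v=df\,v$ for all $x\in X_0$, $v\in T_xM$. Combined with $f(X_0)\subseteq[0,\infty)$, $f\circ\phi^{z_0}(X_0)\subseteq(-\infty,0]$, and the two sign conditions on $\frac d{dt}|_{t=0}$ in \eqref{eq:dd t f psi t}, this is exactly what forces, for $t>0$ small, the inequality $f>0$ on $\psi^t(X_0)$ and $f\circ g\circ\psi^t<0$ — hence $g\circ\psi^t(X_0)\cap\psi^t(X_0)=\emptyset$, i.e. $g$ displaces $\psi^t(X_0)$. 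Here I expect to need a compactness argument: $X_0$ is compact, so the strict inequalities at $t=0$ persist uniformly for $t\in(0,t_1)$ for some $t_1>0$; the rigidifying relation is what upgrades ``$f$ and $f\circ g$ have the right signs and derivatives at points of $X_0$'' to the same statement at nearby points $\psi^t(x)$.

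Next, $g=\phi^{z_0}\circ\phi_H^1$ is itself a Hamiltonian diffeomorphism of $M$: it is the time-one map of the concatenation $H\#K$ of $H$ with a Hamiltonian $K$ generating the $S^1$-action from $\id$ to $\phi^{z_0}$, and its Hofer norm on $M$ is at most $\Vert H\Vert_M+\Vert K\Vert_M$. This is the weak point of the naive approach, because $\Vert H\Vert_M$ can far exceed $\Vert H\Vert_X$ and $\Vert K\Vert_M$ need not vanish; the fix, which I would carry out carefully, is to not displace with $g$ directly but to use that displacing $\psi^t(X_0)$ only requires a Hamiltonian supported near $X\cup g(X)$, and to replace $H$ by a cutoff $\wt H$ agreeing with $H$ on a neighborhood of $X$ (as in the proofs of Propositions~\ref{prop:Ham X om} and \ref{prop:Vert X Y Y'}), so that $\Vert\wt H\Vert_M$ is as close to $\Vert H\Vert_X$ as we like while $\phi_{\wt H}^1$ still displaces $\psi^t(X_0)$. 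Similarly the $S^1$-part $\phi^{z_0}$ contributes no energy in the limit because we can absorb it: the point is that $\psi^t(X_0)$ is being displaced, and we only track the part of the generating Hamiltonian that is ``seen'' near the relevant sets. Getting this bookkeeping right — choosing the cutoffs and the neighborhoods so that displacement of $\psi^t(X_0)$ survives while the energy is squeezed down to $\Vert H\Vert_X+\varepsilon$ — is the main obstacle.

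Finally, assembling: for each small $t>0$ and each $\varepsilon>0$ we obtain $e(\psi^t(X_0),M,\om)\le\Vert H\Vert_X+\varepsilon\le\Vert\phi^{\BAR z_0}|_X\Vert_X^{M,\om}+2\varepsilon$; taking $\limsup_{t\searrow0}$ and then $\varepsilon\to0$ yields \eqref{eq:d X limsup}. I would also double-check the degenerate cases (e.g. $\Vert\phi^{\BAR z_0}|_X\Vert_X^{M,\om}=\infty$, where there is nothing to prove, and $z_0=1$, where $\phi^{z_0}=\id$ and the sign conditions \eqref{eq:X phi Bar z 0}, \eqref{eq:dd t f psi t} become contradictory unless the relevant sets are empty) to see the hypotheses are used consistently, and confirm that smoothness of $(t,x)\mapsto\psi^t(x)$ together with $\psi^0=\id$ is exactly what makes the $t\searrow0$ limiting arguments legitimate.
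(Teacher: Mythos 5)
There is a genuine gap, and it starts with your very first step. The relative norm $\Vert\phi^{\BAR{z_0}}|_X\Vert_X^{M,\om}$ is by definition (\ref{eq:Vert phi X Y om}) an infimum of $\Vert\Phi\Vert^{M,\om}$ over global Hamiltonian diffeomorphisms $\Phi\in\Ham(M,\om)$ with $\Phi|_X=\phi^{\BAR{z_0}}|_X$; it is \emph{not} an infimum of $\Vert H\Vert_X$ over $X$-compatible Hamiltonians $H\in\HH(M,\om,X)$ --- that infimum is the absolute norm $\Vert\phi^{\BAR{z_0}}|_X\Vert^{X,\om}$, which can be strictly larger (indeed the relative norm can vanish identically while the absolute norm is non-degenerate, see the discussion around Theorem \ref{thm:diam M M'}). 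Hence an $H\in\HH(M,\om,X)$ with $\phi_H^1|_X=\phi^{\BAR{z_0}}|_X$ and $\Vert H\Vert_X\leq\Vert\phi^{\BAR{z_0}}|_X\Vert_X^{M,\om}+\eps$ need not exist, and your concluding chain $e(\psi^t(X_0),M,\om)\leq\Vert H\Vert_X+\eps\leq\Vert\phi^{\BAR{z_0}}|_X\Vert_X^{M,\om}+2\eps$ is unjustified; even if your cutoff bookkeeping were carried out, it would at best bound the absolute norm from below, which does not imply the stated bound on the relative norm since the comparison between the two goes the other way. The route the definition forces on you is simpler: fix an \emph{arbitrary} $\Phi\in\Ham(M,\om)$ with $\Phi|_X=\phi^{\BAR{z_0}}|_X$, show that $\Phi$ itself displaces $\psi^t(X_0)$ for all small $t>0$, and conclude $\Vert\Phi\Vert^{M,\om}\geq e(\psi^t(X_0),M,\om)$ directly from (\ref{eq:e X M om}); no cutoff and no comparison of $\Vert\cdot\Vert_M$ with $\Vert\cdot\Vert_X$ is needed, so the ``weak point'' you devote your third paragraph to repairing does not arise. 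Taking the infimum over $\Phi$ and the limsup over $t$ then gives (\ref{eq:d X limsup}). This is exactly what the paper does.

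Your displacement mechanism is also garbled. You assert that $g:=\phi^{z_0}\circ\phi_H^1$, which is the identity on $X_0$, displaces $\psi^t(X_0)$ because ``$f>0$ on $\psi^t(X_0)$ and $f\circ g\circ\psi^t<0$''. But the rigidifying property gives precisely $df\,dg\,v=df\,v$ at points of $X_0$, so $f\circ g\circ\psi^t(x)$ and $f\circ\psi^t(x)$ have the \emph{same} first-order expansion in $t$ at every $x\in X_0$, with strictly positive derivative by the first inequality in (\ref{eq:dd t f psi t}); the sign separation you claim is therefore false. The rigidity identity must be used the other way around: take $\wt f:=f\circ\phi^{z_0}$ as the separating function and $\Phi$ as the map to which Lemma \ref{le:X 0 phi psi} is applied. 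Then $\wt f(X_0)\sub(-\infty,0]$ and $\left.\frac d{dt}\right|_{t=0}(\wt f\circ\psi^t)\leq0$ by the second parts of (\ref{eq:X phi Bar z 0}) and (\ref{eq:dd t f psi t}), while $\wt f\circ\Phi=f$ on $X_0$, so $\wt f\circ\Phi(X_0)\sub[0,\infty)$, and rigidity applied to the symplectomorphism $\phi^{z_0}\circ\Phi$ (which is the identity on $X_0$) gives $\left.\frac d{dt}\right|_{t=0}(\wt f\circ\Phi\circ\psi^t)=\left.\frac d{dt}\right|_{t=0}(f\circ\psi^t)>0$; the uniform first-order estimate on the compact set $X_0$ (Lemma \ref{le:X 0 phi psi}) then yields $\Phi(\psi^t(X_0))\cap\psi^t(X_0)=\emptyset$ for small $t$. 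Note finally that the rigidifying property is formulated for symplectomorphisms defined on all of $M$, one more reason to work with the global extensions $\Phi\in\Ham(M,\om)$ rather than with $\phi_H^1$ for $H\in\HH(M,\om,X)$, whose flow need not be defined on $M$.
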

\begin{proof}[Proof of Lemma \ref{le:d X limsup}]\setcounter{claim}{0} We choose an open subset $U\sub M$ containing $X$ such that $\BAR{U}$ is compact. Let $\Phi\in\Ham(M,\om)$ be such that 
\begin{equation}\label{eq:psi X phi}\Phi|_X=\phi^{\BAR{z_0}}|_X.\end{equation} 
\begin{claim}\label{claim:d X M om} There exists $t_0\in(0,1]$ such that for $t\in(0,t_0]$, we have 
\[\Phi\circ\psi^t(X_0)\cap\psi^t(X_0)=\emptyset.\]
\end{claim}
\begin{proof}[Proof of Claim \ref{claim:d X M om}] We define $\wt f:=f\circ\phi^{z_0}$. We check the hypotheses of Lemma \ref{le:X 0 phi psi} (below) with $\phi:=\Phi$ and $f$ replaced by $\wt f$: The inclusions (\ref{eq:f X 0}) follow from (\ref{eq:X phi Bar z 0},\ref{eq:psi X phi}). 

We prove the inequalities (\ref{eq:f psi t x}): The first inequality in (\ref{eq:f psi t x}) follows from the second inequality in (\ref{eq:dd t f psi t}). Let $x_0\in X_0$. We define $x(t):=\psi^t(x)$. To prove the second inequality (with $x$ replaced by $x_0$), observe that by (\ref{eq:psi 0 id}), we have $x(0)=x_0$. Furthermore, (\ref{eq:psi X phi}) implies that $\phi^{z_0}\circ\Phi|_{X_0}=\id_{X_0}$. Therefore the hypothesis that $(X_0,df)$ is rigidifying implies that 
\begin{equation}\label{eq:df d phi z 0}df\,d(\phi^{z_0}\circ\Phi)\dot x(0)=df\,\dot x(0).
\end{equation}
The left hand side of this equality equals $\left.\frac d{dt}\right|_{t=0}\big(\wt f\circ\Phi\circ x\big)$. Furthermore, by the first inequality in (\ref{eq:dd t f psi t}), the right-hand side of (\ref{eq:df d phi z 0}) is positive. Hence the second inequality in (\ref{eq:f psi t x}) Therefore, all the hypotheses of Lemma \ref{le:X 0 phi psi} are satisfied. Applying this lemma, the statement of Claim \ref{claim:d X M om} follows.
\end{proof}
Claim \ref{claim:d X M om} implies that 
\begin{equation}\label{eq:d limsup} 
\Vert\Phi\Vert^{M,\om}\geq\limsup_{t\searrow0}e(\psi^t(X_0),M,\om).
\end{equation}
Since this holds for every $\Phi\in\Ham(M,\om)$ satisfying (\ref{eq:psi X phi}), inequality (\ref{eq:d X limsup}) follows. This proves Lemma \ref{le:d X limsup}. 
\end{proof}
The next result provides a large class of examples of rigidifying pairs $(X,\al)$. Let $X,Y,Y'$ be smooth manifolds and $f\in C^\infty(X,Y)$ and $f'\in C^\infty(X,Y')$ maps. We say that $f$ \emph{factors} through $f'$ iff there exists a map $g\in C^\infty(Y',Y)$ satisfying $f=g\circ f'$. Let $(M,\om)$ be a symplectic manifold, $X_0\sub M$ a subset, and $f\in C^\infty(M,\R)$. 
\begin{lem}\label{le:rigid} The pair $(X_0,df)$ is rigidifying, provided that there exist symplectic manifolds $(\wt M,\wt\om)$ and $(M',\om')$, a connected Lie group $G$, a Hamiltonian action of $G$ on $\wt M$, an moment map $\mu:\wt M\to\g^*$ for the action, and a symplectomorphism $\psi:\wt M\to M\x M'$ such that the following holds. (Note that by definition, $\mu$ is equivariant.) We denote by $\pr:M\x M'\to M$ the canonical projection. Then the composition $f\circ\pr\circ\psi$ factors through $\mu$, and we have
\begin{equation}\label{eq:X pr mu}X_0=\pr\circ\psi(\mu^{-1}(0)). 
\end{equation}
\end{lem}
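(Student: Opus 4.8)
The plan is to transport the problem to $\wt M$ via $\psi$, where it becomes a statement about a symplectomorphism that fixes the zero level set of the moment map $\mu$ pointwise. So let $\phi\colon M\to M$ be a symplectomorphism with $\phi|_{X_0}=\id_{X_0}$; I must verify (\ref{eq:al phi x}) for $\al=df$, i.e.\ that $df\,d\phi\,v=df\,v$ for every $x\in X_0$ and $v\in T_xM$. First I would set $\wt\phi:=\psi^{-1}\circ(\phi\x\id_{M'})\circ\psi$, a symplectomorphism of $(\wt M,\wt\om)$, and write $\wt f:=f\circ\pr\circ\psi=g\circ\mu$, where $g\in C^\infty(\g^*,\R)$ is the function through which $f\circ\pr\circ\psi$ factors. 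Since $\phi$ fixes $X_0$ pointwise, $\phi\x\id_{M'}$ fixes $X_0\x M'$ pointwise; and by (\ref{eq:X pr mu}) we have $\psi(\mu^{-1}(0))\sub X_0\x M'$, so $\wt\phi$ fixes $\mu^{-1}(0)$ pointwise. Now fix $x\in X_0$ and $v\in T_xM$, choose $\wt m\in\mu^{-1}(0)$ with $\pr\circ\psi(\wt m)=x$, and choose a lift $\wt v\in T_{\wt m}\wt M$ of $v$ under the submersion $\pr\circ\psi$. Using the identities $\phi\circ\pr=\pr\circ(\phi\x\id_{M'})$ and $(\phi\x\id_{M'})\circ\psi=\psi\circ\wt\phi$ together with the fact $\wt\phi(\wt m)=\wt m$, a short chain-rule computation gives $df\,v=d\wt f_{\wt m}\,\wt v$ and $df\,d\phi\,v=d\wt f_{\wt m}\,(d\wt\phi_{\wt m}\wt v)$. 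Hence it suffices to prove that $d\wt f_{\wt m}\circ d\wt\phi_{\wt m}=d\wt f_{\wt m}$ for every $\wt m\in\mu^{-1}(0)$.

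To establish this I would pass to the Hamiltonian vector field $X_{\wt f}$ of $\wt f$ on $(\wt M,\wt\om)$. Since $\ker d\wt f_{\wt m}$ is the $\wt\om$-orthogonal complement of $X_{\wt f}(\wt m)$, and $\wt\phi$ is a symplectomorphism with $\wt\phi(\wt m)=\wt m$, the equality $d\wt f_{\wt m}\circ d\wt\phi_{\wt m}=d\wt f_{\wt m}$ is equivalent to $d\wt\phi_{\wt m}\big(X_{\wt f}(\wt m)\big)=X_{\wt f}(\wt m)$. So everything reduces to showing that $d\wt\phi_{\wt m}$ fixes the single vector $X_{\wt f}(\wt m)$.

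This last step is the heart of the matter, and it is where the specific structure enters. Because $\wt f=g\circ\mu$, the defining property of the moment map gives $X_{\wt f}(\wt m)=\zeta_{\wt M}(\wt m)$, the value at $\wt m$ of the fundamental vector field of $\zeta:=dg_0\in(\g^*)^*=\g$ (up to a sign depending on the moment-map convention); in particular $X_{\wt f}(\wt m)$ is tangent to the orbit $G\cdot\wt m$. Equivariance of $\mu$ yields $\mu(G\cdot\wt m)=\operatorname{Ad}^*(G)\,\mu(\wt m)=\{0\}$, so the orbit $G\cdot\wt m$ — in particular the curve $s\mapsto\exp(s\zeta)\cdot\wt m$ — lies entirely in $\mu^{-1}(0)$. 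Since $\wt\phi$ fixes $\mu^{-1}(0)$ pointwise, differentiating this curve at $s=0$ gives $d\wt\phi_{\wt m}\big(X_{\wt f}(\wt m)\big)=X_{\wt f}(\wt m)$, which finishes the proof. The only things requiring care are the bookkeeping of base points in the reduction step and the normalisation conventions for $\mu$ and $X_{\wt f}$; neither affects the orbit-tangency observation, so I do not anticipate a genuine obstacle.
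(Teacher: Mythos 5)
Your proposal is correct and takes essentially the same route as the paper: after the (harmless) normalization $\psi=\id$, the paper reduces to showing $d(\mu\circ\wt\phi)=d\mu$ along $\mu^{-1}(0)$ (Lemma \ref{le:d mu phi}), which it proves exactly as you do---by differentiating the orbit curve $s\mapsto\exp(s\xi)\cdot\wt m$, which stays in $\mu^{-1}(0)$ by equivariance, and then using that $\wt\phi$ is symplectic. Your only repackaging is to work with the single Hamiltonian vector field $X_{\wt f}=X_\zeta$, $\zeta=dg_0$, instead of all fundamental vector fields at once, and to keep $\psi$ explicit via lifts rather than invoking the without-loss-of-generality reduction.
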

The proof of Lemma \ref{le:rigid} is based on the following result. Let $(M,\om)$ be a symplectic manifold, $\phi$ a symplectomorphism on $M$, and $G$ a connected Lie group. We denote by $\g$ the Lie algebra of $G$ and fix a Hamiltonian action of $G$ on $M$ and an (equivariant) moment map $\mu:M\to\g^*$. 
\begin{lem}\label{le:d mu phi} Assume that $\phi(\mu^{-1}(0))=\mu^{-1}(0)$ and the restriction of $\phi$ to $\mu^{-1}(0)$ is $G$-equivariant. Then we have
\begin{equation}\label{eq:d mu phi}d(\mu\circ\phi)(x)=d\mu(x),\quad\forall x\in \mu^{-1}(0).
\end{equation}
\end{lem}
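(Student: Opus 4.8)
The plan is to reduce the assertion to a pointwise, infinitesimal statement about the fundamental vector fields of the action, and then to combine the moment map equation with the hypothesis that $\phi$ intertwines these vector fields along $\mu^{-1}(0)$, together with $\phi^*\om=\om$.

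For $\xi\in\g$ write $\mu^\xi:=\lan\mu,\xi\ran\in C^\infty(M,\R)$ and let $\xi_M$ be the fundamental vector field of the action, $\xi_M(x):=\frac{d}{dt}\big|_{t=0}\exp(t\xi)\cdot x$. Since a linear map $T_xM\to\g^*$ is determined by its pairings with all $\xi\in\g$, and pairing with the (constant) element $\xi$ commutes with $d$, it suffices to show
\[d\big(\mu^\xi\circ\phi\big)(x)=d\mu^\xi(x)\qquad\textrm{for all }\xi\in\g,\ x\in\mu^{-1}(0).\]
Here I will use the moment map equation $d\mu^\xi=\om(\xi_M,\cdot\,)$ (the sign convention is immaterial, provided it is used consistently on both sides).

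Next I would observe that $\mu^{-1}(0)$ is $G$-invariant: by equivariance of $\mu$ the coadjoint action of $g$ carries $\mu(x)$ to $\mu(g\cdot x)$, and the coadjoint action fixes $0\in\g^*$. Hence, for $x\in\mu^{-1}(0)$ and $\xi\in\g$, the orbit curve $t\mapsto\exp(t\xi)\cdot x$ stays in $\mu^{-1}(0)$, as does $t\mapsto\exp(t\xi)\cdot\phi(x)$ because $\phi(x)\in\mu^{-1}(0)$ by hypothesis. The key step is then the infinitesimal intertwining relation
\[d\phi(x)\,\xi_M(x)=\xi_M(\phi(x)),\qquad x\in\mu^{-1}(0),\]
which I obtain by differentiating the $G$-equivariance identity $\phi(\exp(t\xi)\cdot x)=\exp(t\xi)\cdot\phi(x)$ (valid since $\phi|_{\mu^{-1}(0)}$ is equivariant) at $t=0$, using smoothness of $\phi$ and of the action on all of $M$.

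With this in hand the conclusion is a short computation. Fix $x\in\mu^{-1}(0)$, $\xi\in\g$, $v\in T_xM$; using the moment map equation at $\phi(x)$, then the intertwining relation, then $\phi^*\om=\om$, then the moment map equation at $x$:
\begin{eqnarray*}
d\big(\mu^\xi\circ\phi\big)(x)v&=&d\mu^\xi(\phi(x))\,d\phi(x)v\ =\ \om\big(\xi_M(\phi(x)),\,d\phi(x)v\big)\\
&=&\om\big(d\phi(x)\,\xi_M(x),\,d\phi(x)v\big)\ =\ \om\big(\xi_M(x),v\big)\ =\ d\mu^\xi(x)v.
\end{eqnarray*}
As $\xi$ and $v$ are arbitrary, this gives $d(\mu\circ\phi)(x)=d\mu(x)$. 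The argument presents no serious obstacle; the one point deserving care is that $0$ need not be a regular value of $\mu$, so $\mu^{-1}(0)$ may fail to be a manifold, and the equivariance of $\phi|_{\mu^{-1}(0)}$ must be exploited through the orbit curves and the infinitesimal relation $d\phi\circ\xi_M=\xi_M\circ\phi$ rather than via any submanifold structure. Connectedness of $G$ plays no role in this particular lemma.
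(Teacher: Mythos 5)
Your proposal is correct and follows essentially the same route as the paper: pair $\mu$ with $\xi\in\g$, use the moment map equation $d\mu^\xi=\om(X_\xi,\cdot)$, differentiate the equivariance identity $\phi(\exp(t\xi)x)=\exp(t\xi)\phi(x)$ at $t=0$ to get $d\phi(x)X_\xi(x)=X_\xi(\phi(x))$, and conclude via $\phi^*\om=\om$. Your extra remarks (invariance of $\mu^{-1}(0)$ and not needing it to be a submanifold) are correct but only make explicit what the paper's argument uses implicitly.
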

In the proof of this lemma, for $\xi\in\g$ we denote by $X_\xi$ the vector field on $M$ generated by $\xi$. 

\begin{proof}[Proof of Lemma \ref{le:d mu phi}]\setcounter{claim}{0} Let $x\in\mu^{-1}(0)$, $v\in T_xM$ and $\xi\in\g$. Then we have
\begin{equation}\label{eq:d mu d phi}\lan d\mu(\phi(x))d\phi(x)v,\xi\ran=\om\big(X_\xi\circ\phi(x),d\phi(x)v\big).\end{equation}
Since, by assumption, $\phi|_{\mu^{-1}(0)}$ is $G$-equivariant, we have $\phi\big(\exp(t\xi)x\big)=\exp(t\xi)\phi(x)$, for every $t\in\R$. Taking the derivative at $t=0$, it follows that 
\[d\phi(x)X_\xi(x)=X_\xi\circ\phi(x).\]
Combining this with equality (\ref{eq:d mu d phi}) and using that $\phi$ is a symplectomorphism, it follows that 
\[\lan d\mu(\phi(x))d\phi(x)v,\xi\ran=\lan d\mu(x)v,\xi\ran.\]
Equality (\ref{eq:d mu phi}) follows. This proves Lemma \ref{le:d mu phi}.
\end{proof}
\begin{proof}[Proof of Lemma \ref{le:rigid}]\setcounter{claim}{0}Let $\wt M$ etc. be as in the hypothesis. By a straight-forward argument, we may assume without loss of generality that $\wt M=M\x M'$ and $\psi$ is the identity map on $\wt M$. Let $\phi:M\to M$ be a symplectomorphism satisfying 
\begin{equation}\label{eq:phi X 0 id}\phi|_{X_0}=\id_{X_0}.
\end{equation}
In order to show that equality (\ref{eq:al phi x}) holds, we define $\wt\phi:=\phi\x\id_{M'}:\wt M\to\wt M$. By hypothesis there exists a map $g\in C^\infty(\g^*,\R)$ such that 
\begin{equation}\label{eq:f pr g mu}f\circ\pr=g\circ\mu.\end{equation} 
Using that $\phi\circ\pr=\pr\circ\wt\phi$, it follows that
\begin{equation}\label{eq:d f phi}d(f\circ\phi)d\pr=dg\,d(\mu\circ\wt\phi). 
\end{equation}
The map $\wt\phi$ is an $\om\oplus\om'$-symplectomorphism. Furthermore, equalities (\ref{eq:phi X 0 id},\ref{eq:X pr mu}) imply that $\wt\phi|_{\mu^{-1}(0)}=\id_{\mu^{-1}(0)}$. Therefore, we may apply Lemma \ref{le:d mu phi} with $\phi$ replaced by $\wt\phi$, and conclude that
\[d(\mu\circ\wt\phi)(\wt x)=d\mu(\wt x),\quad\forall\wt x\in\mu^{-1}(0).\]
Combining this with equalities (\ref{eq:d f phi},\ref{eq:f pr g mu}), we obtain
\[d(f\circ\phi)d\pr(\wt x)=df\,d\pr(\wt x),\quad\forall\wt x\in\mu^{-1}(0).\]
Using (\ref{eq:X pr mu}) and that $\pr$ is submersive, it follows that $df\,d\phi(x)=df(x)$, for every $x\in X_0$. It follows that $(X_0,df)$ is rigidifying. This proves Lemma \ref{le:rigid}.
\end{proof}
\subsection{Proof of Theorem \ref{thm:exists X} (Relative Hofer diameter of a small subset of a symplectic manifold)}\label{subsec:proof:thm:exists X}
Both parts of this result are proved along similar lines. The idea for the first part is to define $X_0$ to be the product of a circle and a sphere in $\R^{2n-2}$, each of radius $1/\sqrt2$, $\phi$ a certain linear unitary action of $S^1$ on $\R^{2n}$, $X:=\bigcup_{z\in S^1}\phi^z(X_0)$, and $\psi^t:\R^{2n}\to\R^{2n}$ a map that expands the circle-factor by $(1+t)$. It follows that $X\sub S^{2n-1}$. We may then apply Lemmas \ref{le:rigid} and \ref{le:d X limsup}, obtaining inequality (\ref{eq:d X limsup}). 

Since $\psi^t(X_0)$ is a regular coisotropic submanifold of $\R^{2n}$, we may then use the key result, Theorem \ref{thm:N phi N}, to estimate the right-hand side of inequality (\ref{eq:d X limsup}) from below by $\frac\pi2$. The claimed inequality (\ref{eq:diam S 2n - 1}) is a consequence of this and the following remark. 
\begin{rem}\label{rem:X Y} Let $(M,\om)$ be a symplectic manifold, $X\sub Y\sub M$ closed subsets and $H\in C^\infty\big([0,1]\x M,\R\big)$ a function such that $Y\sub\DD_{X_H}^1$ (the domain of the flow $\phi_H^1$) and $\phi_H^t(X)=X$, $\phi_H^t(Y)=Y$, for every $t\in[0,1]$. Then we have
\[\Vert\phi_H^1|_X\Vert_X^{M,\om}\leq\Vert\phi_H^1|_Y\Vert_Y^{M,\om}.\]
This follows from a straight-forward argument. $\Box$
\end{rem}
\begin{proof}[Proof of Theorem \ref{thm:exists X}(\ref{thm:exists X:S})]\setcounter{claim}{0} For $k\in\N$ and $a>0$ we denote by $S^{2k-1}(a)\sub\R^{2k}$ the sphere of radius $\sqrt{a/\pi}$ around 0. We define
\[X_0:=S^1(\frac\pi2)\x S^{2n-3}(\frac\pi2),\]
(Here we use the hypothesis that $n\geq2$.) Furthermore, we define the map 
\begin{equation}\label{eq:S 1 R 2n}S^1\x\R^{2n}\ni(z,x)\mapsto\phi^z(x)\in\R^{2n}\end{equation}
as follows. Let $z\in S^1$. We denote by $R^z:\R^2\to\R^2$ the rotation by $z$. (Identifying $\R^2=\C$, it is given by the formula $\phi^z(q_1+iq_2):=z(q_1+iq_2)$.) We define $\phi^z:\R^{2n}=\C^n=\C^2\x\C^{n-2}\to\C^n$ to be the unique complex linear extension of the map 
\[R^z\x\id_{\R^{n-2}}:\R^n=\R^2\x\R^{n-2}\to\R^n.\] 
(Note that the identification of $\R^{2n}$ with $\C^n$ here is not compatible with the identification of $\R^2$ with $\C$ in the above formula for $\phi^z$.) The map (\ref{eq:S 1 R 2n}) is a Hamiltonian $S^1$-action on $\C^n$. (It is generated by the function $H:\C^n\to\R$ defined by $H(q+ip):=q_1p_2-q_2p_1$.) We define $X:=\bigcup_{z\in S^1}\phi^z(X_0)$. Since $X_0\sub S^{2n-1}$ and $\phi^z$ is orthonormal, for every $z\in S^1$, it follows that $X\sub S^{2n-1}$, and $\phi^z$ preserves $X$ and $S^{2n-1}$, for every $z\in S^1$. Therefore, by Remark \ref{rem:X Y}, we have 
\begin{equation}\label{eq:d S 2n - 1}\Vert\phi^{-i}|_{S^{2n-1}}\Vert_{S^{2n-1}}^{\R^{2n},\om_0}\geq\Vert\phi^{-i}|_X\Vert_X^{\R^{2n},\om_0}.
\end{equation}
We define 
\begin{eqnarray*}&(M,\om):=(\C^n,\om_0),\quad z_0:=i,&\\
&\psi:[0,1]\x\C^n\to\C^n,\quad\psi^t(y,y'):=\psi(t,y,y'):=((1+t)y,y'),&\end{eqnarray*}
for $t\in[0,1]$ and $(y,y')\in\C^n=\C\x\C^{n-1}$. 
\begin{claim}\label{claim:hyp} The hypotheses of Lemma \ref{le:d X limsup} are satisfied. 
\end{claim}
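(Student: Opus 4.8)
\textbf{Proof proposal for Claim \ref{claim:hyp}.}

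The plan is to verify the hypotheses of Lemma \ref{le:d X limsup} one at a time, in the order in which they are stated there: first the rigidifying property of the pair $(X_0,df)$ for an appropriate $f$, then the initial condition (\ref{eq:psi 0 id}) on $\psi$, then the inclusions (\ref{eq:X phi Bar z 0}), and finally the two derivative inequalities in (\ref{eq:dd t f psi t}). The natural candidate for $f$ is the component of the moment map for the linear $S^1$-action $\phi$, that is, $f(q+ip):=q_1p_2-q_2p_1$, which is exactly the Hamiltonian generating $\phi$ as noted in the text. With this choice, $\phi^{z_0}=\phi^i$ is rotation by a quarter turn in the first $\C^2$-factor, so $f\circ\phi^i=-f$ on that factor, which will make (\ref{eq:X phi Bar z 0}) easy once one knows the sign of $f$ on $X_0$.

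For the rigidifying property I would invoke Lemma \ref{le:rigid} directly. Here one takes the trivial product $(M',\om')$ to be a point (or, if a positive-dimensional factor is needed for later parts, one can absorb the $\C^{n-2}$-sphere appropriately), $G=S^1$ acting on $M=\C^n$ by $\phi$, and $\mu$ the moment map whose single component is $f$ up to an additive constant chosen so that $\mu^{-1}(0)$ is precisely the locus where $f$ vanishes; one then checks that $X_0=S^1(\tfrac\pi2)\x S^{2n-3}(\tfrac\pi2)$ equals (the projection of) $\mu^{-1}(0)$, which amounts to the elementary fact that $q_1p_2-q_2p_1=0$ together with the two radius constraints cuts out exactly this product of spheres. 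Actually the cleanest route may be to use a slightly larger group or an auxiliary $M'$-factor so that $X_0$ is genuinely the zero set of a moment map; I would pick whichever packaging makes $X_0=\pr\circ\psi(\mu^{-1}(0))$ literally true, since the factoring hypothesis $f\circ\pr\circ\psi=g\circ\mu$ is then immediate from $g=\mathrm{id}$. Condition (\ref{eq:psi 0 id}) is trivial since $\psi^0(y,y')=(y,y')$.

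The inclusions (\ref{eq:X phi Bar z 0}) reduce to determining the sign of $f$ on $X_0$: on $X_0$ one has $q_1p_2-q_2p_1=0$ by construction (the circle factor $S^1(\tfrac\pi2)\subset\C$ is a Lagrangian-type locus where this bilinear form vanishes), so in fact $f\equiv 0$ on $X_0$, giving both $f(X_0)\subseteq[0,\infty)$ and $f\circ\phi^i(X_0)=-f(X_0)\subseteq(-\infty,0]$ at once. The remaining work, and the one place a short computation is unavoidable, is the pair of derivative inequalities in (\ref{eq:dd t f psi t}). Writing a point of $X_0$ as $(y,y')$ with $y=q_1+ip_1\in S^1(\tfrac\pi2)\subset\C$, the flow $\psi^t$ scales $y\mapsto(1+t)y$, and $f\circ\psi^t$ picks up factors of $(1+t)$ from the first $\C$-slot; differentiating at $t=0$ and using the constraint defining $X_0$ one gets a strictly positive expression for $\frac d{dt}\big|_0(f\circ\psi^t)$ (this is where the specific radius $\tfrac\pi2$ and the non-degeneracy of the circle factor enter), and then $f\circ\phi^i\circ\psi^t=-f\circ\psi^t$ forces the second derivative to be $\leq 0$, indeed $<0$, automatically. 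The main obstacle is purely bookkeeping: making the identifications of $\R^{2n}$ with $\C^n$ and of $\R^2$ with $\C$ consistent (the text explicitly warns these two identifications differ), and checking the sign in $\frac d{dt}\big|_0(f\circ\psi^t)$ comes out positive rather than merely nonzero. Once these are nailed down, all five hypotheses of Lemma \ref{le:d X limsup} hold and the claim follows.
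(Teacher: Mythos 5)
Your choice of $f$ is the wrong function, and several of the identities you build on it are false. You take $f(q+ip)=q_1p_2-q_2p_1$, i.e.\ the Hamiltonian generating the action $\phi$ itself. But a Hamiltonian is invariant under its own flow, so $f\circ\phi^z=f$ for every $z\in S^1$; in particular $f\circ\phi^i=f$, not $-f$ (one can also check directly: $(q_1,q_2)\mapsto(-q_2,q_1)$, $(p_1,p_2)\mapsto(-p_2,p_1)$ leaves $q_1p_2-q_2p_1$ unchanged). Moreover $f$ does not vanish on $X_0$: the point with $y_1=1/\sqrt2$, $y_2=\pm i/\sqrt2$ and all other coordinates zero lies in $X_0=S^1(\frac\pi2)\x S^{2n-3}(\frac\pi2)$ but has $f=\pm\frac12$, so $f(X_0)\not\sub[0,\infty)$ and the inclusions (\ref{eq:X phi Bar z 0}) fail. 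The derivative condition fails too: since $\psi^t$ scales $y_1$ by $(1+t)$, one gets $f\circ\psi^t=(1+t)f$, hence $\frac d{dt}\big|_{t=0}(f\circ\psi^t)=f$, which is not strictly positive on $X_0$ (it is $0$ or negative at the points above). Finally, the rigidifying step is only sketched, and for this $f$ it cannot be repaired along the lines of Lemma \ref{le:rigid}: $q_1p_2-q_2p_1$ does not factor through the moment map of any torus action whose zero level is $X_0$, and with $G=S^1$ the level set $\mu^{-1}(0)$ is a hypersurface, not the codimension-two set $X_0$.

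The paper's verification uses a different function: $f(y,y'):=|y|^2-\frac12$ on $\C\x\C^{n-1}$. Then $f\equiv0$ on $X_0$ (giving the first inclusion), and since $\phi^i(y,y'',y''')=(-y'',y,y''')$ one has $f\circ\phi^i(y,y'',y''')=|y''|^2-\frac12\leq0$ on $X_0$ (giving the second); the derivatives are $\frac d{dt}\big|_{t=0}(f\circ\psi^t)=2|y|^2=1>0$ on $X_0$ and $\frac d{dt}\big|_{t=0}(f\circ\phi^i\circ\psi^t)=0$ identically, because the $(1+t)$-scaled coordinate lands in the second slot, which $f$ ignores. The rigidifying property comes from Lemma \ref{le:rigid} with $G=S^1\x S^1$ acting by $(z,z')\cdot(y,y')=(zy,z'y')$, moment map $\mu(y,y')=\frac i2\left(\frac12-|y|^2,\frac12-|y'|^2\right)$, so that $\mu^{-1}(0)=X_0$ and $f$ visibly factors through $\mu$. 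The geometric point you missed is that $\phi^i$ swaps (with a sign) the first two complex coordinates rather than rotating the phase of $y_1$, so the right $f$ is a radius-type function distinguishing the first coordinate, not the generator of the circle action.
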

\begin{proof}[Proof of Claim \ref{claim:hyp}] The {\bf condition (\ref{eq:psi 0 id})} is clearly satisfied. We define the map 
\[f:\C^n=\C\x\C^{n-1}\to\R,\quad f(y,y'):=|y|^2-\frac12.\]
By the next claim, we may apply Lemma \ref{le:rigid}, to conclude that {\bf $(X_0,df)$ is rigidifying}. 
\begin{claim}\label{claim:hyp le:rigid} The pair $(X_0,f)$ satisfies the hypotheses of Lemma \ref{le:rigid}. 
\end{claim}
\begin{proof}[Proof of Claim \ref{claim:hyp le:rigid}] We define $(\wt M,\wt\om):=(\C^n,\om_0)$, $M'$ to be a point, $G:=S^1\x S^1$, the action of $G$ on $\C^n=\C\x\C^{n-1}$ by $(z,z')\cdot(y,y'):=(zy,z'y')$, the moment map 
\[\mu(y,y'):=\frac i2\left(\frac12-|y|^2,\frac12-|y'|^2\right)\in i\R\x i\R=\g\iso\g^*,\]
and $\psi$ to be the identity on $\wt M=M\x M'$. Then the hypotheses Lemma \ref{le:rigid} are satisfied. This proves Claim \ref{claim:hyp le:rigid}.
\end{proof}
The next hypothesis of Lemma \ref{le:d X limsup}, the {\bf inclusions (\ref{eq:X phi Bar z 0})}, follow from the facts 
\begin{equation}\label{eq:f X 0 0 f phi}f(X_0)=\{0\},\quad f\circ\phi^i(X_0)\sub[-\frac12,0].
\end{equation} 
(Here in the second condition we used that $\phi^i(y,y'',y''')=(-y'',y,y''')$, for every $(y,y'',y''')\in\C\x\C\x\C^{n-2}=\C^n$.) We prove that the {\bf inequalities (\ref{eq:dd t f psi t})} are satisfied: Direct calculations show that
\[\left.\frac d{dt}\right|_{t=0}\big(f\circ\psi^t(y,y')\big)=2|y|^2,\quad\left.\frac d{dt}\right|_{t=0}\big(f\circ\phi^i\circ\psi^t(y,y')\big)=0,\]
for every $(y,y')\in\C^n=\C\x\C^{n-1}$. Since every $x=(y,y')\in X_0$ satisfies $2|y|^2=1$, (\ref{eq:dd t f psi t}) follows. Hence all the hypotheses of Lemma \ref{le:d X limsup} are satisfied. This proves Claim \ref{claim:hyp}.
\end{proof}
By Claim \ref{claim:hyp} we may apply Lemma \ref{le:d X limsup}, to conclude that inequality (\ref{eq:d X limsup}) holds. 

Let now $t\in[0,1]$. Then we have $\psi^t(X_0)=S^1\big((1+t)^2\frac\pi2\big)\x S^{2n-3}(\frac\pi2)$. This is a closed regular coisotropic submanifold of $\R^{2n}$. Therefore, applying Theorem \ref{thm:N phi N}, inequality (\ref{eq:e A}) holds with $N:=\psi^t(X_0)$. Remark \ref{rmk:S N N'} and Proposition \ref{minimalareaStiefel} below imply that
\[A_\Cross\big(\R^{2n},\om_0,\psi^t(X_0)\big)\geq \min\big\{(1+t)^2\frac\pi2,\frac\pi2\big\}=\frac\pi2.\]
Combining this with (\ref{eq:d S 2n - 1},\ref{eq:d X limsup},\ref{eq:e A}), it follows that 
\[\Vert\phi^{-i}|_{S^{2n-1}}\Vert_{S^{2n-1}}^{\R^{2n},\om_0}\geq\frac\pi2.\]
Recalling the definition (\ref{eq:diam X Y om}) of $\diam(X,Y,\om)$, inequality (\ref{eq:diam S 2n - 1}) follows. This proves statement (\ref{thm:exists X:S}) of Theorem \ref{thm:exists X}. 
\end{proof}
\begin{rem}\label{rmk:} In \cite{ZiLeafwise} the second author proved a result (Theorem 1) similar to Theorem \ref{thm:N phi N}. That result states a positive lower bound on the number of leafwise fixed points of the given Hamiltonian diffeomorphism. Hence its conclusion is stronger than that of Theorem \ref{thm:N phi N}. 

The hypotheses of both results are the same, except that in \cite[Theorem 1]{ZiLeafwise} it is assumed that $\Vert\phi\Vert_\om<A(N)$, rather than $\Vert\phi\Vert_\om<A_\Cross(N)$. (The former condition is simpler and stronger than the latter.) 

Since for $\psi^t$ and $X_0$ as in the above proof of Theorem \ref{thm:exists X}(\ref{thm:exists X:S}), we have 
\[A(\psi^t(X_0))\to0,\quad\textrm{as }t\searrow0,\]
\cite[Theorem 1]{ZiLeafwise} is not suitable for this proof. We really need the refinement given in the present article. The same holds for the proof of Theorem \ref{thm:exists X}(\ref{thm:exists X:k'}) (see below). 
\end{rem}
{\bf Outline of the proof of the second part of Theorem \ref{thm:exists X}:} This is a refinement of the technique used in the proof of part (\ref{thm:exists X:S}). The idea is as follows: We choose $\ell\geq2$ and $k_1,\ldots,k_\ell\in\N$, such that 
\[\sum_{i=1}^\ell k_i=\kkk(n,d),\]
and there exist integers $n_1,\ldots,n_\ell$ as in the definition of $\kkk(n,d)$. Without loss of generality we may assume that $n_1=\min_in_i$. Assume first that 
\begin{equation}\label{eq:sum n}\sum_{i=1}^\ell k_in_i=n.\end{equation}
For every pair $k,m\in\N$ satisfying $k\leq m$, and $a>0$, we define the \emph{Stiefel manifold of area $a$} to be
\[V(k,n,a):=\big\{\Theta\in\C^{k\x n}\,\big|\,\Theta\Theta^*=\frac a\pi\one_k\big\}.\]
We define $a:=\frac\pi{\kkk(n,d)}$ and 
\[X_0:=\Cross_{i=1}^\ell V(k_i,n_i,a)\sub\Cross_{i=1}^\ell\C^{k_i\x n_i}=\C^n.\]
Here rescaling the standard Stiefel manifolds ensures that $X_0\sub\BAR{B}^{2n}$. The second part of condition (\ref{eq:k i n i d geq}) guarantees that the dimension of $X_0$ is bounded above by $d$. 

We choose a linear unitary action $S^1\x\C^n\ni(z,x)\mapsto\phi^z(x)\in\C^n$ which for a given tuple of matrices $(\Theta_1,\ldots,\Theta_\ell)\in X_0$ intertwines the first row of $\Theta_1$ with part of the first row of $\Theta_2$. (This makes sense because of our assumption that $n_1=\min_in_i$.) The set $X:=\bigcup_{z\in S^1}\phi^z(X_0)$ has the properties required in statement (\ref{thm:exists X:k'}): That $X\sub\BAR{B}^{2n}$ follows from the fact $X_0\sub\BAR{B}^{2n}$ and the orthogonality of the action $\phi$. Furthermore, since $\dim X_0\leq d$, the Hausdorff dimension of $X$ is bounded above by $d+1$. 

The main task is to show that inequality (\ref{eq:diam X}) holds. We will prove this by showing that the restriction of the map $\phi^{-i}:\C^n\to\C^n$ to $X$ has relative Hofer semi-norm bounded below by the right-hand side of (\ref{eq:diam X}). The proof of this bound is based on the Lemmas \ref{le:d X limsup} and \ref{le:rigid}. The remainder of the argument is now analogous to the argument for part (\ref{thm:exists X:S}).

If the integers $\ell$, $k_1,\ldots,k_\ell$, $n_1,\ldots,n_\ell$ cannot be chosen such that the equality (\ref{eq:sum n}) holds, then the idea is to ``project away'' the extra $(\sum_ik_in_i)-n$ complex dimensions. This means that we construct a suitable surjective linear map
\[\Psi:\Cross_{i=1}^\ell\C^{k_i\x n_i}\to\C^n,\]
and define 
\[X_0:=\Psi\big(\Cross_{i=1}^\ell V(k_i,n_i,a)\big)\sub\C^n.\]
We may then carry out a modified version of the above argument. 
\begin{proof}[Proof of Theorem \ref{thm:exists X}(\ref{thm:exists X:k'})]\setcounter{claim}{0} We choose $\ell\in\{2,3,\ldots\}$ and $k_1,\ldots,k_\ell\in\N$ as in the definition of $\kkk(n,d)$ such that 
\begin{equation}\label{eq:sum k}\sum_{i=1}^\ell k_i=\kkk(n,d).
\end{equation} 
We also choose integers $n_1,\ldots,n_\ell$ such that the conditions (\ref{eq:n i k i},\ref{eq:k i n i d geq},\ref{eq:2 min}) are satisfied. Reordering the pairs $(k_i,n_i)$, we may assume that $n_1=\min_in_i$. We choose an injective map 
\[(\lf,\kf,\nf):\{n_1+n_2+1,\ldots,n\}\to\{1,\ldots,\ell\}\x\N\x\N\]
satisfying  
\begin{eqnarray}\label{eq:K i}&\kf(i)\leq k_{\lf(i)},\quad\nf(i)\leq n_{\lf(i)},&\\
\label{eq:lf kf i}&(\lf,\kf)(i)\neq(1,1)\textrm{ or }(2,1),&\end{eqnarray}
for every $i\in\{n_1+n_2+1,\ldots,n\}$. (Our convention is that $\{m,\ldots,n\}:=\emptyset$ if $m>n$.) To see that we may choose this map to be injective, note that the number of allowed choices of $(\lf,\kf,\nf)$ is 
\[(k_1-1)n_1+(k_2-1)n_2+\sum_{L=3}^\ell k_Ln_L=\sum_{L=1}^\ell k_Ln_L-n_1-n_2.\]
(The first and second term are obtained by considering $L=1,2$, and the other terms by considering $L\geq3$.) By the first condition in (\ref{eq:k i n i d geq}) the right-hand side of this equality is bounded below by 
\[n-n_1-n_2=\big|\big\{n_1+n_2+1,\ldots,n\big\}\big|.\]
It follows that we may choose the map $(\lf,\kf,\nf)$ to be injective. We extend $(\lf,\kf,\nf)$ to $\{1,\ldots,n\}$ by defining 
\begin{eqnarray}\label{eq:L K N n1}&(\lf,\kf,\nf)(i):=(1,1,i),\quad\forall i\in\{1,\ldots,n_1\},&\\
\label{eq:L K N n2}&(\lf,\kf,\nf)(i):=(2,1,i-n_1),\,\forall i\in\big\{n_1+1,\ldots,\min\{n_1+n_2,n\}\big\}.
\end{eqnarray}
(Since $2n_1=2\min_in_i\leq n$, (\ref{eq:L K N n1}) makes sense.) We define the map $\Psi:\Cross_{i=1}^\ell\C^{k_i\x n_i}\to\C^n$ by
\begin{equation}\label{eq:Psi Cross}\Psi^i(\Theta_1,\ldots,\Theta_\ell):=(\Theta_{\lf(i)})^{\kf(i)}_{\nf(i)},\,\forall i\in\{1,\ldots,n\},\end{equation}
where for a matrix $\Theta$ the number $\Theta^i_j\in\C$ denotes its $(i,j)$-th entry. It follows from the inequalities (\ref{eq:K i}) (holding for $i\in\{n_1+n_2+1,\ldots,n\}$) and (\ref{eq:L K N n1},\ref{eq:L K N n2}) that this definition makes sense. We define 
\begin{eqnarray}\label{eq:a}&a:=\frac\pi{\kkk(n,d)},&\\
\label{eq:X 0 Psi}&X_0:=\Psi\big(\Cross_{i=1}^\ell V(k_i,n_i,a)\big)\sub\C^n.&\end{eqnarray}
Furthermore, we define the map 
\begin{equation}\label{eq:phi z }S^1\x\C^n\ni(z,x)\mapsto\phi^z(x)\in\C^n
\end{equation} 
as follows. Let $z\in S^1$. We denote by $R^z:\R^2\to\R^2$ the rotation by $z$, and define 
\[T^z:\R^{n_1}\x\R^{n_1}\to\R^{n_1}\x\R^{n_1},\quad T^z(q,q'):=(Q,Q'),\] 
where $(Q_i,Q'_i):=R^z(q_i,q'_i)$, for every $i\in\{1,\ldots,n_1\}$. We define $\phi^z:\C^n=\C^{2n_1}\x\C^{n-2n_1}\to\C^n$ to be the unique complex linear extension of the map 
\[T^z\x\id_{\R^{n-2n_1}}:\R^n=\R^{2n_1}\x\R^{n-2n_1}\to\R^n.\]
(The conditions (\ref{eq:2 min}) and $n_1=\min_in_i$ guarantee that this makes sense.) The map (\ref{eq:phi z }) is a Hamiltonian $S^1$-action on $\C^n$. (It is generated by the function $H:\C^n\to\R$ defined by $H(q+ip):=\sum_{i=1}^{n_1}q_ip_{n_1+i}-q_{n_1+i}p_i$.)
\begin{claim}\label{claim:X} The set 
\[X:=\bigcup_{z\in S^1}\phi^z(X_0)\]
satisfies the conditions of statement (\ref{thm:exists X:k'}).
\end{claim}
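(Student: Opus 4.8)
The plan is to verify the three defining properties of statement (\ref{thm:exists X:k'}) --- compactness together with the containment $X\sub\BAR B^{2n}$, the Hausdorff-dimension bound, and the lower bound on $\diam(X,\R^{2n},\om_0)$ --- the last one via the Main Lemma \ref{le:d X limsup}, Lemma \ref{le:rigid}, and Theorem \ref{thm:N phi N}, following the scheme of part (\ref{thm:exists X:S}). The first two are routine. A point of $\Cross_{i=1}^\ell V(k_i,n_i,a)$ has Frobenius norm square $\sum_i\operatorname{tr}(\Theta_i\Theta_i^*)=\frac a\pi\sum_ik_i=\frac a\pi\kkk(n,d)=1$ by (\ref{eq:a}) and (\ref{eq:sum k}); since $(\lf,\kf,\nf)$ is injective, $\Psi$ is a coordinate projection, hence norm-nonincreasing, so $X_0\sub\BAR B^{2n}$, and as $\phi^z$ is unitary, $X=\bigcup_{z\in S^1}\phi^z(X_0)\sub\BAR B^{2n}$; both sets are compact as continuous images of compacta. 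For the dimension, $\dim\Cross_iV(k_i,n_i,a)=\sum_ik_i(2n_i-k_i)\leq d$ by the second inequality in (\ref{eq:k i n i d geq}), and since $\Psi$ is Lipschitz and Hausdorff dimension behaves well under Lipschitz maps and products, $\dim_HX_0\leq d$ and $\dim_HX\leq d+1$.

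For the diameter bound it suffices, since $\phi^{-i}|_X\in\Ham(X,\om_0)$, to prove $\Vert\phi^{-i}|_X\Vert_X^{\R^{2n},\om_0}\geq a$, and I would deduce this from Lemma \ref{le:d X limsup} applied with $z_0:=i$, with $f:\C^n\to\R$, $f(x):=\sum_{j=1}^{n_1}|x_j|^2-\frac a\pi$, and with $\psi^t$ the linear map scaling by $1+t$ those coordinates $x_j$ for which $\lf(j)=1$ (i.e.\ those that $\Psi$ reads off the block $\Theta_1$) and fixing the rest. Condition (\ref{eq:psi 0 id}) is clear. That $(X_0,df)$ is rigidifying I would get from Lemma \ref{le:rigid}, taking $(\wt M,\wt\om):=(\Cross_i\C^{k_i\x n_i},\oplus_i\om_0)$, $(M',\om'):=(\C^{(\sum_ik_in_i)-n},\om_0)$, $G:=\Cross_i\U(k_i)$ acting by $(A_i)_i\cdot(\Theta_i)_i:=(A_i\Theta_i)_i$, moment map $\mu((\Theta_i)_i):=\big(\frac i2(\frac a\pi\one_{k_i}-\Theta_i\Theta_i^*)\big)_i$, and $\psi$ the coordinate-permutation symplectomorphism with $\pr\circ\psi=\Psi$: then $\mu^{-1}(0)=\Cross_iV(k_i,n_i,a)$, so (\ref{eq:X pr mu}) holds, while $f\circ\pr\circ\psi=f\circ\Psi$ depends only on $(\Theta_1\Theta_1^*)^1_1$ (by (\ref{eq:L K N n1}), $\Psi^j(\Theta)=(\Theta_1)^1_j$ for $j\leq n_1$) and hence factors through $\mu$. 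The remaining hypotheses (\ref{eq:X phi Bar z 0}), (\ref{eq:dd t f psi t}) are verified directly: using (\ref{eq:L K N n1}), (\ref{eq:L K N n2}) and the inequalities $2n_1\leq n$ and $n_1\leq n_2$ guaranteed by (\ref{eq:2 min}) and $n_1=\min_in_i$, the coordinates $x_1,\dots,x_{n_1}$ of a point of $X_0$ form the whole first row of some $\Theta_1\in V(k_1,n_1,a)$ and $x_{n_1+1},\dots,x_{2n_1}$ form part of the first row of some $\Theta_2\in V(k_2,n_2,a)$, so $f\equiv0$ on $X_0$, while $\phi^i(u,u',w)=(-u',u,w)$ gives $f\circ\phi^i\leq0$ on $X_0$; and since $\psi^t$ rescales $x_1,\dots,x_{n_1}$ but not $x_{n_1+1},\dots,x_{2n_1}$, one gets $\frac d{dt}\big|_0(f\circ\psi^t)=\frac{2a}\pi>0$ and $\frac d{dt}\big|_0(f\circ\phi^i\circ\psi^t)=0$ on $X_0$. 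Lemma \ref{le:d X limsup} then yields $\Vert\phi^{-i}|_X\Vert_X^{\R^{2n},\om_0}\geq\limsup_{t\searrow0}e(\psi^t(X_0),\R^{2n},\om_0)$.

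It then remains to bound $e(\psi^t(X_0),\C^n,\om_0)\geq a$ for every $t\in[0,1]$. Let $\wt\psi^t:\Cross_i\C^{k_i\x n_i}\to\Cross_i\C^{k_i\x n_i}$ scale the whole block $\Theta_1$ by $1+t$ and fix the others; then $\Psi\circ\wt\psi^t=\psi^t\circ\Psi$, so $\wt\psi^t(\Cross_iV(k_i,n_i,a))\sub\Psi^{-1}(\psi^t(X_0))$, which under the symplectomorphism $\psi$ is $\psi^t(X_0)\x M'$. Using monotonicity of $e$ under inclusions and the inequality $e(A\x M',M\x M')\leq e(A,M)$ --- both immediate from (\ref{eq:e X M om}) and the product property of $\Ham$ --- one obtains
\[e(\psi^t(X_0),\C^n,\om_0)\geq e\big(\wt\psi^t(\Cross_iV(k_i,n_i,a)),\Cross_i\C^{k_i\x n_i},\oplus_i\om_0\big).\]
Now $\wt\psi^t(\Cross_iV(k_i,n_i,a))=V(k_1,n_1,(1+t)^2a)\x\Cross_{i\geq2}V(k_i,n_i,a)$ is again a product of (rescaled) Stiefel manifolds --- this is why $\wt\psi^t$ rescales all of $\Theta_1$ rather than just its first row --- hence a closed connected regular coisotropic submanifold, each factor being the zero set of a moment map for a free $\U(k_i)$-action. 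So Theorem \ref{thm:N phi N} applies, and splitting over the factors $\C^{k_i\x n_i}$, each a bounded symplectic manifold, Remark \ref{rmk:S N N'} and Proposition \ref{minimalareaStiefel} give $A_\Cross\geq\min\{(1+t)^2a,a,\dots,a\}=a$. Combining all of this, $\limsup_{t\searrow0}e(\psi^t(X_0),\R^{2n},\om_0)\geq a$, hence $\diam(X,\R^{2n},\om_0)\geq\Vert\phi^{-i}|_X\Vert_X^{\R^{2n},\om_0}\geq a=\pi/\kkk(n,d)$.

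The main obstacle is keeping the coisotropic geometry under control. The deformation must be chosen so that the displaced set stays a product of genuine rescaled Stiefel manifolds: a level set $\{\Theta\Theta^*=D\}$ with $D$ a non-scalar diagonal matrix is \emph{not} coisotropic, so scaling a single row of $\Theta_1$ would fail, which is the reason for rescaling the whole block. One must then track precisely which entries $\Psi$ retains in order to arrange, simultaneously, (\ref{eq:X pr mu}), the factorization of $f\circ\Psi$ through $\mu$, the sign/derivative conditions (\ref{eq:X phi Bar z 0}), (\ref{eq:dd t f psi t}), and the identity $\Psi\circ\wt\psi^t=\psi^t\circ\Psi$. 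The computation of the minimal symplectic action of the rescaled Stiefel factors (Proposition \ref{minimalareaStiefel}) is the other substantive ingredient; everything else is routine.
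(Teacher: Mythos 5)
Your proposal is correct and follows essentially the same route as the paper: the same $f$, $\psi^t$, and moment-map setup for Lemma \ref{le:rigid}, the same block-scaled product of Stiefel manifolds $N^t=V(k_1,n_1,(1+t)^2a)\x\Cross_{i\geq2}V(k_i,n_i,a)$ pushed through $\Psi$, and the same chain of Remark \ref{rem:e M M'}, Theorem \ref{thm:N phi N}, Remark \ref{rmk:S N N'}, and Proposition \ref{minimalareaStiefel}. The only deviations are cosmetic (your $M'\iso\C^{(\sum_ik_in_i)-n}$ via a coordinate permutation versus the paper's $\ker\Psi$ with $\wt\Psi=(\Psi,\Psi')$, and $z_0=i$ in place of the paper's $z_0=\frac\pi2$).
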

\begin{pf}[Proof of Claim \ref{claim:X}] Since the Stiefel manifolds are compact, the set $X_0$ and hence {\bf $X$ is compact}. To see that {\bf $X$ is contained in $\BAR{B}^{2n}$}, note that $|\Psi(\Theta)|\leq|\Theta|\leq1,$ for every $\Theta=(\Theta_1,\ldots,\Theta_\ell)\in\Cross_iV(k_i,n_i,a)$. It follows that $X_0\sub\BAR{B}^{2n}$. Since $\phi^z$ is orthonormal, for every $z\in S^1$, this implies that $X\sub\BAR{B}^{2n}$. To see that {\bf $X$ has Hausdorff dimension at most $d+1$}, observe that
\begin{equation}\label{eq:dim Cross}\dim\big(\Cross_{i=1}^\ell V(k_i,n_i,a)\big)=\sum_{i=1}^\ell k_i(2n_i-k_i)\leq d,\end{equation}
where in the second step we used the second inequality in (\ref{eq:k i n i d geq}). Note that $X$ is the image of $S^1\x\Cross_{i=1}^\ell V(k_i,n_i,a)$ under the smooth map $(z,\Theta_1,\ldots,\Theta_\ell)\mapsto\phi^z\circ\Psi(\Theta_1,\ldots,\Theta_\ell)$. Combining this with (\ref{eq:dim Cross}), a standard result (cf.~\cite[p.~176]{Fed}) implies that $X$ has Hausdorff dimension at most $d+1$. 

Recalling that $a=\pi/{\kkk(n,d)}$, {\bf inequality (\ref{eq:diam X})} is a consequence of the definition (\ref{eq:diam X Y om}) and the following claim.
\begin{claim}\label{claim:Vert phi a} We have
\begin{equation}\label{eq:Vert phi a}\Vert\phi^{-i}|_X\Vert_X^{\R^{2n},\om_0}\geq a.
\end{equation}
\end{claim}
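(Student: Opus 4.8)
The plan is to run the same pipeline as in the proof of part (\ref{thm:exists X:S}): produce a rigidifying pair out of a moment map (Lemma \ref{le:rigid}), feed it into the Main Lemma \ref{le:d X limsup} to bound $\Vert\phi^{-i}|_X\Vert_X^{\R^{2n},\om_0}$ below by a limit of displacement energies, and then estimate those energies from below with the coisotropic intersection theorem (Theorem \ref{thm:N phi N}) and the minimal-area computation for Stiefel manifolds (Proposition \ref{minimalareaStiefel}). Write $\zeta=(\zeta_1,\dots,\zeta_n)$ for a point of $\C^n$. By (\ref{eq:Psi Cross}) together with (\ref{eq:L K N n1}) and (\ref{eq:L K N n2}), and using $2n_1\le n$ from (\ref{eq:2 min}), the coordinates $\zeta_1,\dots,\zeta_{n_1}$ of $\Psi(\Theta_1,\dots,\Theta_\ell)$ are exactly the entries of the first row of $\Theta_1$, and $\zeta_{n_1+1},\dots,\zeta_{2n_1}$ are the first $n_1$ entries of the first row of $\Theta_2$. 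Define $f:\C^n\to\R$ by $f(\zeta):=\sum_{j=1}^{n_1}|\zeta_j|^2-\tfrac a\pi$.

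First I would verify the hypotheses of Lemma \ref{le:rigid} for $(X_0,df)$, taking $\wt M:=\Cross_{i=1}^\ell\C^{k_i\x n_i}$, $M':=\C^{(\sum_ik_in_i)-n}$, $G:=\Cross_{i=1}^\ell\U(k_i)$ acting by $\Theta_i\mapsto g_i\Theta_i$, moment map $\mu=(\mu_i)_i$ with $\mu_i(\Theta):=\tfrac i2\big(\tfrac a\pi\one_{k_i}-\Theta_i\Theta_i^*\big)\in\mathfrak u(k_i)^*$, and $\psi:\wt M\to\C^n\x M'$ the coordinate permutation with $\pr\circ\psi=\Psi$. Then $\mu^{-1}(0)=\Cross_iV(k_i,n_i,a)$, so $\pr\circ\psi(\mu^{-1}(0))=X_0$ by (\ref{eq:X 0 Psi}), and $f\circ\pr\circ\psi=f\circ\Psi$ equals $(\Theta_1\Theta_1^*)^1_1-\tfrac a\pi$, which factors through $\mu$; hence $(X_0,df)$ is rigidifying. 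Next, set $\psi^t(\zeta):=\big((1+t)\zeta_1,\dots,(1+t)\zeta_{n_1},\zeta_{n_1+1},\dots,\zeta_n\big)$, so $\psi^0=\id$. On $X_0$ one has $\sum_{j=1}^{n_1}|\zeta_j|^2=(\Theta_1\Theta_1^*)^1_1=\tfrac a\pi$ and $\sum_{j=1}^{n_1}|\zeta_{n_1+j}|^2\le(\Theta_2\Theta_2^*)^1_1=\tfrac a\pi$, so $f(X_0)=\{0\}$ and $f\circ\phi^i(X_0)\sub[-\tfrac a\pi,0]$, giving (\ref{eq:X phi Bar z 0}) with $z_0:=i$. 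Moreover $\tfrac d{dt}\big|_{t=0}f(\psi^t\zeta)=2\sum_{j=1}^{n_1}|\zeta_j|^2=\tfrac{2a}\pi>0$ on $X_0$, while $\phi^i\circ\psi^t$ acts on $\zeta_{n_1+1},\dots,\zeta_{2n_1}$ independently of $t$ (since $\psi^t$ only rescales $\zeta_1,\dots,\zeta_{n_1}$, and $\phi^i$ feeds $\zeta_{n_1+j}$ into the $j$-th slot), so $\tfrac d{dt}\big|_{t=0}f(\phi^i\circ\psi^t\zeta)=0$; hence (\ref{eq:dd t f psi t}) holds. Lemma \ref{le:d X limsup} then yields, since $\overline i=-i$,
\[
\Vert\phi^{-i}|_X\Vert_X^{\R^{2n},\om_0}\ \ge\ \limsup_{t\searrow0}e\big(\psi^t(X_0),\R^{2n},\om_0\big).
\]

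It remains to bound $e(\psi^t(X_0),\R^{2n},\om_0)$ below by $a$ for all small $t>0$. Since $\psi^t\circ\Psi=\Psi\circ\wt\psi^t$, where $\wt\psi^t$ rescales the first row of $\Theta_1$ by $(1+t)$ and fixes everything else, $\psi^t(X_0)$ is the $\Psi$-image of a product of (ellipsoidally rescaled) Stiefel manifolds. I would show that $\psi^t(X_0)$ is, or contains, a regular closed coisotropic submanifold $N_t$ which up to a linear symplectomorphism of $\R^{2n}$ is a product $\wt V_1^t\x\Cross_{i\ge2}V(k_i,\nu_i,a)$ of Stiefel-type manifolds of ``area'' $a$ (here $\wt V_1^t$ is the space of $k_1$-frames in $\C^{n_1}$ whose first vector is rescaled to squared norm $(1+t)^2a/\pi$, and the $\nu_i\le n_i$ record the rows of $\Theta_i$ actually hit by $\Psi$); in the case $\sum_ik_in_i=n$ the map $\Psi$ is itself a linear symplectomorphism and $N_t=\psi^t(X_0)$. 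Then $e(\psi^t(X_0),\R^{2n},\om_0)\ge e(N_t,\R^{2n},\om_0)$ by monotonicity of $e$ under inclusion, Theorem \ref{thm:N phi N} gives $e(N_t,\R^{2n},\om_0)\ge A_\Cross(N_t)$, and the product bounded splitting of $N_t$ together with Remark \ref{rmk:S N N'} and Proposition \ref{minimalareaStiefel} shows each factor has minimal action $\ge a$ (for $\wt V_1^t$ one gets $\min\{(1+t)^2a,a\}=a$ when $t\ge0$), so $A_\Cross(N_t)\ge a$. Hence $\limsup_{t\searrow0}e(\psi^t(X_0),\R^{2n},\om_0)\ge a$, which with the displayed inequality proves (\ref{eq:Vert phi a}).

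The checks going into the rigidifying pair and the Main Lemma are routine and essentially parallel part (\ref{thm:exists X:S}); the hard part will be the last paragraph, namely showing that $\psi^t(X_0)$ contains a regular closed coisotropic submanifold whose symplectic type is a product of Stiefel manifolds of area $a$. When $\sum_ik_in_i=n$ this is immediate, but in general $\Psi$ collapses $(\sum_ik_in_i)-n$ complex dimensions, and one must verify carefully, using the explicit bookkeeping behind (\ref{eq:K i})--(\ref{eq:Psi Cross}) (so that the rows of each $\Theta_i$ that are used by $\Psi$ organize into complete Stiefel blocks), that the ``used-entry'' subfibre of $\Cross_iV(k_i,n_i,a)$ is carried by $\Psi$, symplectically, onto such a product. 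The other fine point is that invoking Proposition \ref{minimalareaStiefel} also requires the ellipsoidal variant $A(\wt V_1^t)\ge\min\{(1+t)^2a,a\}$, which should come out of the same proof since rescaling a single frame vector only rescales the corresponding part of the action spectrum.
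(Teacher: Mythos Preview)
Your verification of the hypotheses of Lemmas \ref{le:rigid} and \ref{le:d X limsup} is essentially correct and parallel to the paper's argument. The genuine gap is in your final paragraph, where you try to locate a regular closed coisotropic submanifold $N_t$ \emph{inside} $\psi^t(X_0)\sub\C^n$. The map $(\lf,\kf,\nf)$ on $\{n_1+n_2+1,\ldots,n\}$ is only required to be injective and to satisfy (\ref{eq:K i},\ref{eq:lf kf i}); nothing forces the entries of a given $\Theta_i$ that are hit by $\Psi$ to fill out complete rows, so your proposed ``used-entry subfibre'' need not be a product of Stiefel manifolds, nor even a submanifold. And even granting that, Proposition \ref{minimalareaStiefel} does not cover the ``ellipsoidal'' $\wt V_1^t$ you would need.

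The paper avoids this completely by lifting to the larger space rather than projecting down. Since $(\lf,\kf,\nf)$ is injective, $\Psi$ is a coordinate \emph{projection}, so $\wt\Psi:=(\Psi,\Psi')$ (with $\Psi'$ the projection onto $\ker\Psi$ along its symplectic complement) is a linear symplectomorphism $\Cross_i\C^{k_i\x n_i}\to\C^n\x\ker\Psi$. The paper also defines $\psi^t$ to rescale \emph{all} coordinates $\zeta_i$ with $\lf(i)=1$, not just the first $n_1$; the hypotheses of Lemma \ref{le:d X limsup} still go through (the first $n_1$ coordinates of $\phi^i\circ\psi^t(x)$ come from slots with $\lf=2$), and with this choice one has $\psi^t(X_0)=\Psi(N^t)$ for the honest product of round Stiefel manifolds
\[
N^t:=V\big(k_1,n_1,(1+t)^2a\big)\x\Cross_{i\ge2}V(k_i,n_i,a).
\]
Then $\wt\Psi(N^t)\sub\psi^t(X_0)\x\ker\Psi$, and Remark \ref{rem:e M M'} (the inequality $e(X\x M',M\x M')\leq e(X,M)$) together with monotonicity of $e$ gives
\[
e\big(\psi^t(X_0),\C^n\big)\ \geq\ e\big(\wt\Psi(N^t),\C^n\x\ker\Psi\big)\ =\ e\big(N^t,\Cross_i\C^{k_i\x n_i}\big).
\]
Now $N^t$ is regular, closed and coisotropic in $\Cross_i\C^{k_i\x n_i}$, and Theorem \ref{thm:N phi N}, Remark \ref{rmk:S N N'} and Proposition \ref{minimalareaStiefel} apply directly to give $e(N^t)\geq a$. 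The product inequality for displacement energy (Remark \ref{rem:e M M'}) is the idea missing from your approach; it replaces the delicate search for a coisotropic inside the projected set by a one-line reduction to the unprojected product.
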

\begin{pf}[Proof of Claim \ref{claim:Vert phi a}] We define $(M,\om):=(\C^n,\om_0)$, $z_0:=\frac\pi2$ and the map $[0,1]\x \C^n\ni(t,x)\mapsto \psi^t(x)\in\C^n$ by 
\begin{equation}\label{eq:psi t x i}(\psi^t(x))^i:=\left\{\begin{array}{ll}
(1+t)x^i,&\textrm{if }\lf(i)=1,\\
x^i,&\textrm{otherwise.}
\end{array}\right.
\end{equation}
\begin{claim}\label{claim:hyp X} The hypotheses of Lemma \ref{le:d X limsup} are satisfied. 
\end{claim}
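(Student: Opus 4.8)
The statement to establish is Claim~\ref{claim:hyp X}, namely that the hypotheses of Lemma~\ref{le:d X limsup} are verified for the data $(M,\om)=(\C^n,\om_0)$, $z_0=\frac\pi2$, the $S^1$-action $\phi$, the compact set $X_0$ defined in~(\ref{eq:X 0 Psi}), and the homotopy $\psi^t$ defined in~(\ref{eq:psi t x i}). The plan is to check the four ingredients of that lemma one at a time, exactly paralleling what was done in the proof of Claim~\ref{claim:hyp} for part~(\ref{thm:exists X:S}).

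First I would verify condition~(\ref{eq:psi 0 id}): this is immediate from~(\ref{eq:psi t x i}), since setting $t=0$ gives $\psi^0(x)^i=x^i$ for every $i$, so $\psi^0=\id$. Next I would produce the function $f\in C^\infty(\C^n,\R)$ making $(X_0,df)$ rigidifying. The natural choice is $f(x):=\sum_{\lf(i)=1}|x^i|^2-\frac{a}{\pi}$, i.e.\ (up to an additive constant) the squared norm of the block of coordinates indexed by $\lf(i)=1$, which is precisely the block that $\psi^t$ rescales. To see that $(X_0,df)$ is rigidifying I would invoke Lemma~\ref{le:rigid}: take $\wt M=\C^n$, $M'$ a point, $G=\prod_{j=1}^\ell\U(k_j)$ (or a torus built from the Stiefel-manifold symmetries) acting blockwise on $\Cross_i\C^{k_i\times n_i}$, with moment map whose zero set is $\Cross_i V(k_i,n_i,a)$, and $\psi=\Psi$ the identification; one then checks that $f\circ\pr\circ\psi$ factors through $\mu$ (it is a function of the matrix products $\Theta_j\Theta_j^*$) and that $X_0=\pr\circ\psi(\mu^{-1}(0))$ by~(\ref{eq:X 0 Psi}). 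This is the step most likely to require care, since one must choose the group and moment map so that $\mu^{-1}(0)$ is exactly $\Cross_i V(k_i,n_i,a)$ and so that the composite $f\circ\Psi$ genuinely factors through $\mu$ — the bookkeeping with the projection $\Psi$ and the index map $(\lf,\kf,\nf)$ is the delicate part.

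Third, I would check the inclusions~(\ref{eq:X phi Bar z 0}). On $X_0$ every tuple $(\Theta_1,\ldots,\Theta_\ell)$ satisfies $\Theta_j\Theta_j^*=\frac a\pi\one_{k_j}$, hence the sum of $|x^i|^2$ over the coordinates with $\lf(i)=1$ equals $\frac a\pi\cdot(\text{number of such coordinates in the first block})$, which is arranged so that $f|_{X_0}\equiv 0$; in particular $f(X_0)\subseteq[0,\infty)$. For the second inclusion I would compute the effect of $\phi^{z_0}=\phi^{i}$: the action $\phi^z$ (complex-linear extension of $T^z\times\id$) at $z=i$ swaps the coordinates indexed $1,\ldots,n_1$ with those indexed $n_1+1,\ldots,2n_1$ (up to sign), and by~(\ref{eq:L K N n1}),~(\ref{eq:L K N n2}) the first set is exactly $\{i:\lf(i)=1\}$ while the second is a subset of $\{i:\lf(i)=2\}$. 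Hence $f\circ\phi^{i}$ evaluated on $X_0$ becomes $\sum_{i=n_1+1}^{2n_1}|x^i|^2-\frac a\pi n_1$, which is $\le 0$ on $X_0$ because those coordinates are a proper subcollection of the second Stiefel block whose total squared norm is $\frac a\pi\cdot(\text{something}\le n_1)$; thus $f\circ\phi^{i}(X_0)\subseteq(-\infty,0]$.

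Finally, for the derivative conditions~(\ref{eq:dd t f psi t}), a direct calculation from~(\ref{eq:psi t x i}) gives $\frac{d}{dt}\big|_{t=0}(f\circ\psi^t(x))=2\sum_{\lf(i)=1}|x^i|^2$, which is strictly positive on $X_0$ since on $X_0$ that sum equals $\frac a\pi n_1>0$; and since $\phi^{i}$ maps the coordinates with $\lf(i)=1$ onto coordinates that $\psi^t$ does \emph{not} rescale, $f\circ\phi^{i}\circ\psi^t$ is independent of $t$ near $t=0$, so $\frac{d}{dt}\big|_{t=0}(f\circ\phi^{i}\circ\psi^t(x))=0\le 0$. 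This verifies all hypotheses of Lemma~\ref{le:d X limsup}, proving Claim~\ref{claim:hyp X}. The only genuinely nontrivial point is the rigidifying step via Lemma~\ref{le:rigid}; the rest are the same direct computations as in part~(\ref{thm:exists X:S}), just with the single circle/sphere replaced by the product of Stiefel manifolds and the extra projection $\Psi$ carried along.
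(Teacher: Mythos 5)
Your overall skeleton is the same as the paper's (check (\ref{eq:psi 0 id}), produce $f$, get the rigidifying property from Lemma \ref{le:rigid}, then verify (\ref{eq:X phi Bar z 0}) and (\ref{eq:dd t f psi t})), but two of your concrete choices break the argument. The first is the function $f$. You take $f(x)=\sum_{\lf(i)=1}|x^i|^2-\frac a\pi$, i.e.\ you sum over \emph{all} coordinates drawn from $\Theta_1$. But the injection $(\lf,\kf,\nf)$ on $\{n_1+n_2+1,\ldots,n\}$ may be forced to use slots with $\lf=1$, $\kf\geq2$ (e.g.\ $\ell=2$, $(k_1,n_1)=(2,2)$, $(k_2,n_2)=(1,2)$, $n=6$: the only allowed slots are $(1,2,1)$ and $(1,2,2)$). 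Those extra coordinates sit in positions $>2n_1$, so they are untouched by $\phi^i$ but rescaled by $\psi^t$. Consequently with your $f$: the value on $X_0$ is not $0$ (your claim $f|_{X_0}\equiv0$ is false, though $f\geq0$ still holds); the second inclusion in (\ref{eq:X phi Bar z 0}) can fail, since $f\circ\phi^i(x)=\sum_{j\leq n_1}|x^{n_1+j}|^2+\sum_{i>2n_1,\,\lf(i)=1}|x^i|^2-\frac a\pi$ and the second sum can make this positive; and the second inequality in (\ref{eq:dd t f psi t}) fails for the same reason, because $\frac d{dt}\big|_{t=0}(f\circ\phi^i\circ\psi^t(x))=2\sum_{i>2n_1,\,\lf(i)=1}|x^i|^2$, which is strictly positive in the example above, so $f\circ\phi^i\circ\psi^t$ is \emph{not} $t$-independent. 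Moreover, if only part of a row of $\Theta_1$ is selected, $f\circ\Psi$ is not a function of $\Theta_1\Theta_1^*$, so the factorization through $\mu$ needed for Lemma \ref{le:rigid} also fails. The paper's $f$ uses only the first $n_1$ coordinates, i.e.\ the complete first row of $\Theta_1$: $f(y,y')=|y|^2-\frac1{\kkk(n,d)}$ on $\C^{n_1}\x\C^{n-n_1}$; then $f\circ\Psi$ is the $(1,1)$-entry of $\Theta_1\Theta_1^*$ minus a constant (so it factors through $\mu$ via $g(\xi)=2i(\xi_1)^1_1$), and the inclusion and derivative checks go through exactly as in part (\ref{thm:exists X:S}).

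The second gap is the input data for Lemma \ref{le:rigid}. You propose $\wt M=\C^n$, $M'$ a point, and ``$\psi=\Psi$ the identification'', while letting $G=\Cross_j\U(k_j)$ act on $\Cross_i\C^{k_i\x n_i}$; this is inconsistent, and it cannot be repaired when $\sum_ik_in_i>n$, because then $\Psi$ is a non-injective linear projection, not an identification, and the Stiefel product does not live in $\C^n$ at all. Handling exactly this case is the point of the construction in the paper: one takes $\wt M=\Cross_i\C^{k_i\x n_i}$, $M'=\ker\Psi$ with the restricted form, $\Psi'$ the projection along $(\ker\Psi)^{\om_0}$, and $\wt\Psi=(\Psi,\Psi'):\wt M\to M\x M'$, a linear symplectomorphism; then $X_0=\pr\circ\wt\Psi(\mu^{-1}(0))$ with $\mu^{-1}(0)=\Cross_iV(k_i,n_i,a)$, and (with the correct $f$) $f\circ\pr\circ\wt\Psi=g\circ\mu$. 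You flagged this step as the delicate one, but flagging it is not carrying it out: as written, the rigidifying hypothesis is not verified, and with your choice of $f$ it could not be.
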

\begin{proof}[Proof of Claim \ref{claim:hyp X}] The {\bf condition (\ref{eq:psi 0 id})} clearly holds. We define
\begin{equation}\label{eq:f R 2n}f:\C^n=\C^{n_1}\x\C^{n-n_1}\to\R,\quad f(y,y'):=|y|^2-\frac1{\kkk(n,d)},\end{equation}
That the pair {\bf $(X_0,df)$ is rigidifying}, is a consequence of the following claim.
\begin{claim}\label{claim:pr hyp le:rigid} The hypothesis of Lemma \ref{le:rigid} is satisfied. 
\end{claim}
\begin{proof}[Proof of Claim \ref{claim:pr hyp le:rigid}] We define 
\[(\wt M,\wt\om):=\big(\Cross_{i=1}^\ell\C^{k_i\x n_i},\om_0\big),\,(M',\om'):=(\ker\Psi,\om_0|_{M'}),\,G:=\Cross_{i=1}^\ell\U(k_i),\]
where $\Psi$ is defined as in (\ref{eq:Psi Cross}) and $\U(k)\sub\C^{k\x k}$ denotes the unitary group. Furthermore, we define the action of $G$ on $\wt M$ by 
\begin{equation}\label{eq:U}(U_1,\ldots,U_\ell)\cdot(\Theta_1,\ldots,\Theta_\ell):=\big(U_1\Theta_1,\ldots,U_\ell\Theta_\ell\big).\end{equation}
Moreover, we identify the Lie algebra $\g$ of $G$ with its dual via the inner product given by the trace, and we define the map $\mu:\Cross_{i=1}^\ell\C^{k_i\x n_i}\to\g^*\iso\g$ by 
\[\mu(\Theta_1,\ldots,\Theta_\ell):=\left(\frac i2\big(\frac1{\kkk(n,d)}\one_{k_1}-\Theta_1\Theta_1^*\big),\ldots,\frac i2\big(\frac1{\kkk(n,d)}\one_{k_\ell}-\Theta_\ell\Theta_\ell^*\big)\right).\]
Finally, we define $\Psi':\wt M\to M'\sub\wt M$ to be the projection along ${M'}^{\om_0}$ (the symplectic complement of the linear symplectic subspace $M'$ of $\wt M$), and 
\begin{equation}\label{eq:psi}\wt\Psi:=(\Psi,\Psi'):\wt M\to M\x M'.\end{equation}
We show that the conditions of Lemma \ref{le:rigid} are satisfied with $\psi:=\wt\Psi$: The action (\ref{eq:U}) is Hamiltonian, and $\mu$ is a moment map. We denote by $\pr:M\x M'\to M$ the canonical projection. Condition (\ref{eq:X pr mu}) follows from the facts $\mu^{-1}(0)=\Cross_iV(k_i,n_i,a)$ and $\pr\circ\wt\Psi=\Psi$, and (\ref{eq:X 0 Psi}). 

Furthermore, we define $g:\g\to\R$ by $g(\xi_1,\ldots,\xi_\ell):=2i(\xi_1)^1_1$. Using (\ref{eq:f R 2n},\ref{eq:L K N n1},\ref{eq:Psi Cross}) and the fact $\pr\circ\wt\Psi=\Psi$, it follows that $g\circ\mu=f\circ\pr\circ\wt\Psi$. This proves that $\mu$ factors through $f\circ\pr\circ\wt\Psi$ and completes the proof of Claim \ref{claim:pr hyp le:rigid}.
\end{proof}
\begin{claim}\label{claim:X 0 phi} The {\bf inclusions (\ref{eq:X phi Bar z 0})} hold.
\end{claim}
\begin{proof}[Proof of Claim \ref{claim:X 0 phi}] Let $x:=(y,y')\in X_0$. By (\ref{eq:X 0 Psi}) this means that there exists a tuple $\Theta:=(\Theta_1,\ldots,\Theta_\ell)\in \Cross_iV(k_i,n_i,a)$ such that $\Psi(\Theta)=(y,y')$. To see the first inclusion in (\ref{eq:X phi Bar z 0}), observe that by (\ref{eq:L K N n1},\ref{eq:Psi Cross}), $y$ is the first row of $\Theta_1$. Using (\ref{eq:f R 2n}), it follows that $f(y,y')=0$. This proves the first inclusion.

To see that the second inclusion holds, we denote $y'=:(y'',y''')\in\C^{n_1}\x\C^{n-2n_1}$. Using (\ref{eq:phi z },\ref{eq:f R 2n}), we have 
\begin{equation}\label{eq:f phi y}f\circ\phi^i(y,y'',y''')=f(-y'',y,y''')=|y''|^2-\frac1{\kkk(n,d)}.\end{equation}
It follows from (\ref{eq:L K N n2},\ref{eq:Psi Cross}) that $|y''|$ is bounded above by the norm of the first row of $\Theta_2$, i.e., $1/\sqrt{\kkk(n,d)}$. Combining this with (\ref{eq:f phi y}), we have $f\circ\phi^{-i}(x)\leq0$. The second inclusion in (\ref{eq:X phi Bar z 0}) follows. This proves Claim \ref{claim:X 0 phi}.
\end{proof}
We check the last hypothesis of Lemma \ref{le:d X limsup}:
\begin{claim}\label{claim:dd t f} For every $x\in X_0\sub\C^n$ the {\bf inequalities (\ref{eq:dd t f psi t})} hold.
\end{claim}
\begin{proof}[Proof of Claim \ref{claim:dd t f}] To see that the {\bf first inequality} holds, we denote $x=:(y,y')\in\C^{n_1}\x\C^{n-n_1}$. By (\ref{eq:X 0 Psi}) there exists a tuple $\Theta:=(\Theta_1,\ldots,\Theta_\ell)\in\Cross_iV(k_i,n_i,a)$ such that $\Psi(\Theta)=x$. It follows from (\ref{eq:L K N n1},\ref{eq:psi t x i},\ref{eq:f R 2n}) that 
\begin{equation}\label{eq:dd t f 2}\left.\frac d{dt}\right|_{t=0}(f\circ\psi^t(x))=2|y|^2.\end{equation}
Furthermore, (\ref{eq:L K N n1},\ref{eq:Psi Cross}) imply that $y$ is the first row of $\Theta_1$, and therefore has norm $1/\sqrt{\kkk(n,d)}$. Combining this with (\ref{eq:dd t f 2}), it follows that the first inequality in (\ref{eq:dd t f psi t}) hold.

We show that the {\bf second inequality} holds: Using (\ref{eq:L K N n2},\ref{eq:Psi Cross},\ref{eq:psi t x i}), we have $\big(\phi^i\circ\psi^t(x)\big)^j=x^{n_1+j}$, for every $j\in\{1,\ldots,n_1\}$, and therefore,
\[\left.\frac d{dt}\right|_{t=0}\big(f\circ\phi^i\circ\psi^t(x)\big)=0.\]
Hence the second inequality in (\ref{eq:dd t f psi t}) is satisfied. This proves Claim \ref{claim:dd t f}. \end{proof}
Hence all hypotheses of Lemma \ref{le:d X limsup} are satisfied. This completes the proof of Claim \ref{claim:hyp X}. 
\end{proof}
By Claim \ref{claim:hyp X}, we may apply Lemma \ref{le:d X limsup}, to conclude that inequality (\ref{eq:d X limsup}) holds. 

Let now $t\in[0,1]$. We define 
\[N^t:=V\big(k_1,n_1,(1+t)^2a\big)\x\Cross_{i=2}^\ell V(k_i,n_i,a).\] 
We denote by $\pr:\C^n\x\ker\Psi\to\C^n$ the canonical projection. Recall the definition (\ref{eq:psi}) of $\wt\Psi$. Using (\ref{eq:L K N n1},\ref{eq:Psi Cross},\ref{eq:X 0 Psi},\ref{eq:psi t x i}), we have $\pr\circ\wt\Psi(N^t)=\Psi(N^t)=\psi^t(X_0)$. Hence it follows from Remark \ref{rem:e M M'} below that 
\begin{equation}\label{eq:e psi t X 0}e\big(\psi^t(X_0),\C^n,\om_0\big)\geq e\big(\wt\Psi(N^t),\C^n\x\ker\Psi,\om_0\big).\end{equation}
Furthermore, by an elementary argument, the map $\wt\Psi$ is a (linear) symplectomorphism, and therefore, 
\begin{equation}\label{eq:e psi N t}e\big(\wt\Psi(N^t),\C^n\x\ker\Psi,\om_0\big)=e\big(N^t,\Cross_i\C^{k_i\x n_i},\om_0\big).\end{equation}
Note that $N^t$ is a closed regular coisotropic submanifold of $\Cross_i\C^{k_i\x n_i}$. Hence applying Theorem \ref{thm:N phi N}, it follows that 
\begin{equation}\label{eq:e N t}e\big(N^t,\Cross_i\C^{k_i\x n_i},\om_0\big)\geq A_\Cross\big(\Cross_i\C^{k_i\x n_i},\om_0,N^t\big).\end{equation}
It follows from Remark \ref{rmk:S N N'} and Proposition \ref{minimalareaStiefel} below that
\begin{equation}\label{eq:A Cross C}A_\Cross\big(\Cross_i\C^{k_i\x n_i},\om_0,N^t\big)\geq\min\{(1+t)^2a,a\}=a.\end{equation}
Combining this with (in-)equalities (\ref{eq:d X limsup},\ref{eq:e psi t X 0},\ref{eq:e psi N t},\ref{eq:e N t}), inequality (\ref{eq:Vert phi a}) follows. This proves Claim \ref{claim:Vert phi a} and hence Claim \ref{claim:X}, and completes the proof of statement (\ref{thm:exists X:k'}) and therefore of Theorem \ref{thm:exists X}.
\end{pf}
\end{pf}
\end{proof}
\appendix
\section{Auxiliary results}\label{sec:aux}
\subsection{(Pre-)symplectic geometry}\label{subsec:pre sympl}
The following lemma was used in the proof of Proposition \ref{prop:H wt H}. 
\begin{lem}\label{le:X H r} Let $(M,\om)$ be a symplectic manifold, $N\sub M$ a symplectic submanifold, and $r:M\to N$ a smooth retraction such that $\ker dr(x)=T_xN^\om$ (as defined in (\ref{eq:W om})), for every $x\in N$. Let $H\in C^\infty(N,\R)$. Then we have, for every $x\in N$, 
\begin{equation}\label{eq:X H r} X^\om_{H\circ r}(x)=X^{\om|_N}_H(x).
\end{equation}
\end{lem}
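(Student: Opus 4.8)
The plan is to verify the defining property of the Hamiltonian vector field directly, using the retraction $r$ to relate the symplectic forms $\om$ and $\om|_N$ at points of $N$. Fix $x\in N$. Since $X^\om_{H\circ r}(x)$ is characterized by $\om(X^\om_{H\circ r}(x),v)=d(H\circ r)(x)v$ for all $v\in T_xM$, and $X^{\om|_N}_H(x)$ is characterized by $(\om|_N)(X^{\om|_N}_H(x),w)=dH(x)w$ for all $w\in T_xN$, the first step is to show that $X^\om_{H\circ r}(x)$ actually lies in $T_xN$ (a priori it is only in $T_xM$). For this I would use the hypothesis $\ker dr(x)=T_xN^\om$: if $v\in T_xN^\om$ then $d(H\circ r)(x)v=dH(r(x))dr(x)v=0$, so $\om(X^\om_{H\circ r}(x),v)=0$ for every $v\in T_xN^\om$. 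This says $X^\om_{H\circ r}(x)\in (T_xN^\om)^\om=T_xN$, where the last equality is the standard fact that for a symplectic subspace the double $\om$-complement returns the subspace. (Here it matters that $N$ is symplectic, so $T_xN$ is a symplectic subspace of $T_xM$.)

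Once we know $X^\om_{H\circ r}(x)\in T_xN$, the second step is to test the defining equation against vectors $w\in T_xN$ only. Since $r$ is a retraction onto $N$, we have $r|_N=\id_N$, hence $dr(x)w=w$ for all $w\in T_xN$; therefore $d(H\circ r)(x)w=dH(x)dr(x)w=dH(x)w$ for $w\in T_xN$. Combining, for every $w\in T_xN$,
\[
(\om|_N)\big(X^\om_{H\circ r}(x),w\big)=\om\big(X^\om_{H\circ r}(x),w\big)=d(H\circ r)(x)w=dH(x)w.
\]
Thus $X^\om_{H\circ r}(x)\in T_xN$ satisfies the equation that uniquely characterizes $X^{\om|_N}_H(x)$ (uniqueness again using non-degeneracy of $\om|_N$ on $T_xN$), so the two vectors coincide. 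This gives (\ref{eq:X H r}).

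The only genuinely non-routine point is the first step — the a priori tangency $X^\om_{H\circ r}(x)\in T_xN$ — and in fact this is where both hypotheses on $r$ (the kernel condition and the retraction property are each used somewhere) and the symplectic nature of $N$ come in; everything after that is a one-line comparison of defining equations. I expect no serious obstacle, only the need to be careful that $T_xM=T_xN\oplus T_xN^\om$ (valid since $N$ is symplectic) is what legitimizes reducing the test vectors to $T_xN$ and what makes $(T_xN^\om)^\om=T_xN$.
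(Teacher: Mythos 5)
Your proof is correct, and it is essentially the paper's argument: the paper writes the same chain of equalities $\om(X^\om_{H\circ r}(x),\cdot)=dH(x)\,dr(x)=\om|_N(X^{\om|_N}_H(x),dr(x)\cdot)$, observes that $dr(x)$ is the projection onto $T_xN$ along $T_xN^\om$, and then invokes a linear-algebra remark (Remark \ref{rem:V W}) whose content is exactly your two-step verification (testing on $T_xN^\om$ to get tangency, then on $T_xN$ to identify the vectors). Your version just unpacks that remark explicitly, so there is nothing to fix.
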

For the proof of Lemma \ref{le:X H r} we need the following. Let $(V,\om)$ be a symplectic vector space and $W\sub V$ a linear subspace. Assume that $W$ is a symplectic subspace. We denote by $\pr^W$ the linear projection from $V$ onto $W$, along $W^\om$. 
\begin{rem}\label{rem:V W} Let $v\in V$ and $w\in W$ be vectors such that $\om(v,\cdot)=\om|_W(w,\pr^W\cdot)$. Then we have $v=w$. This follows from a straight-forward argument. $\Box$
\end{rem}
\begin{proof}[Proof of Lemma \ref{le:X H r}]\setcounter{claim}{0} Let $x\in N$. We have
\[\om\big(X^\om_{H\circ r}(x),\cdot)=d(H\circ r)(x)=dH(x)dr(x)=\om|_N\big(X^{\om|_N}_H(x),dr(x)\cdot\big).\]
Since $r$ is a retraction onto $N$, the map $dr(x):T_xM\to T_xM$ is a projection onto $T_xN$. By hypothesis its kernel is $T_xN^\om$. Hence equality (\ref{eq:X H r}) follows from Remark \ref{rem:V W}. This proves Lemma \ref{le:X H r}.
\end{proof}
We used the following remark in the proof of Theorem \ref{thm:exists X}. 
\begin{rem}\label{rmk:S N N'} Let $(M,\om)$ and $(M',\om')$ be symplectic manifolds, and $N\sub M$ and $N'\sub M'$ coisotropic submanifolds. Then 
\[S\big(M\x M',\om\oplus\om',N\x N'\big)=S(M,\om,N)+S(M',\om',N').\]
This follows from a straight-forward argument. $\Box$
\end{rem}
The next result was used in the proof of Theorem \ref{thm:exists X}. For $k,n\in\N$ satisfying $k\leq n$ we denote
\[V(k,n):=\big\{\Theta\in\C^{k\x n}\,\big|\,\Theta\Theta^*=\one_k\big\}.\]
\begin{prop}\label{minimalareaStiefel}
The Stiefel manifold $V(k,n)$ has minimal area
\[A(\R^{2kn},\om_0,V(k,n))=\pi.\]
\end{prop}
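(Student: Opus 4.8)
The plan is to describe the isotropic (null) foliation of the coisotropic submanifold $V(k,n)\sub\C^{k\x n}\iso\R^{2kn}$ explicitly, reduce the computation of the action spectrum to a homological period, and then evaluate that period on an explicit disk. First I would observe that $V(k,n)$ is the zero set of the (shifted, equivariant) moment map $\mu(\Theta):=\tfrac i2(\Theta\Theta^*-\one_k)$ for the Hamiltonian $\U(k)$-action on $\C^{k\x n}$ by left multiplication $U\cdot\Theta:=U\Theta$. Since $\Theta\Theta^*=\one_k$ forces $\Theta$ to have rank $k$, this action is free on $V(k,n)$ (and proper, as $\U(k)$ is compact) and $0$ is a regular value; hence, by the standard facts of symplectic reduction, $V(k,n)$ is coisotropic and its isotropic leaves are precisely the $\U(k)$-orbits. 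Concretely, the leaf through $\Theta$ is $\{\,\Theta'\in V(k,n)\mid\text{$\Theta'$ and $\Theta$ have the same row space}\,\}$, it is diffeomorphic to $\U(k)$, and the leaves are the fibers of the map $V(k,n)\to\mathrm{Gr}(k,n)$, $\Theta\mapsto\text{row space of }\Theta$, onto the Grassmannian of $k$-planes in $\C^n$.

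Next I would reduce the problem to a period computation. Fix a primitive $\lambda$ of $\om_0$ on $\R^{2kn}$. For any disk $u\colon\D\to\R^{2kn}$ whose boundary loop $\gamma:=u|_{S^1}$ lies in a single leaf $F$, Stokes' theorem gives $\int_\D u^*\om_0=\int_{S^1}\gamma^*(\lambda|_F)$, and since $F$ is isotropic the form $\lambda|_F$ is closed, so this number is the pairing of $[\lambda|_F]\in H^1(F;\R)$ with $[\gamma]\in H_1(F;\Z)$ and depends only on the class of $\gamma$ in $F$. As $\R^{2kn}$ is contractible, every loop in $F$ bounds a disk, so $S(\R^{2kn},\om_0,V(k,n))$ is the union over all leaves $F$ of the image of the homomorphism $H_1(F;\Z)\to\R$ induced by $[\lambda|_F]$. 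Now $H_1(F;\Z)\iso H_1(\U(k);\Z)\iso\Z$ (using $\det\colon\U(k)\to\U(1)$ and that $\mathrm{SU}(k)$ is simply connected), so each leaf contributes a cyclic subgroup $c_F\Z$. Right multiplication by $\U(n)$ on $\C^{k\x n}$ is a symplectic action preserving $V(k,n)$ and permuting the leaves transitively (it acts transitively on $k$-planes in $\C^n$); since a symplectomorphism carrying one leaf onto another preserves the areas of bounding disks, $c_F$ does not depend on $F$, and the action spectrum is a single cyclic subgroup $c\,\Z\sub\R$.

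It then remains to compute $c$. Here I would use the leaf $F_0$ through $\Theta_0:=[\,\one_k\mid 0\,]$, so $F_0=\{[\,U\mid 0\,]\mid U\in\U(k)\}$, and the loop $\gamma_0(t):=[\,\operatorname{diag}(e^{2\pi it},1,\dots,1)\mid 0\,]$, $t\in[0,1]$, which generates $H_1(F_0;\Z)$ because $t\mapsto\det\operatorname{diag}(e^{2\pi it},1,\dots,1)=e^{2\pi it}$ winds once around $\U(1)$. Filling $\gamma_0$ by the disk $u(z):=[\,\operatorname{diag}(z,1,\dots,1)\mid 0\,]$, every matrix entry of $u$ is constant except the $(1,1)$-entry, which equals $z$; writing that coordinate as $z=x+iy$ one gets $u^*\om_0=dx\wedge dy$, whence $\int_\D u^*\om_0=\pi$ (for $k=1$ this is just a Hopf disk in $S^{2n-1}$). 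Thus $c=\pi$, the action spectrum of $V(k,n)$ equals $\pi\Z$, and therefore
\[A(\R^{2kn},\om_0,V(k,n))=\inf\big(\pi\Z\cap(0,\infty)\big)=\pi.\]

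The elementary disk integral above is the easy part; the step I expect to require the most care is the identification of the null foliation with the $\U(k)$-orbit foliation — verifying freeness and the regular-value condition and invoking the reduction picture correctly — together with the uniformity of $c_F$ over all leaves. This last point is exactly what upgrades the immediate bound $A\le\pi$ to the equality, by excluding action values in the open interval $(0,\pi)$. An alternative to the $\U(n)$-equivariance argument would be to pass to the reduced space $V(k,n)/\U(k)=\mathrm{Gr}(k,n)$ and compare with minimal areas of spheres there and of loops in its leaves, but the equivariance route seems the most economical.
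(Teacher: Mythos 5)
Your argument is correct, and it is worth noting that the paper itself does not prove this proposition at all: it simply cites \cite[Proposition 1.3]{ZiLeafwise}, so your write-up supplies a self-contained proof where the paper defers to a reference. The route you take is the natural one and matches the reduction-theoretic picture the paper uses elsewhere (compare the moment map $\mu(\Theta)=\tfrac i2\big(\tfrac1{\kkk(n,d)}\one_{k}-\Theta\Theta^*\big)$ in the proof of Theorem \ref{thm:exists X}(\ref{thm:exists X:k'})): freeness of the left $\U(k)$-action on the zero level makes $V(k,n)$ coisotropic with isotropic leaves equal to the $\U(k)$-orbits, i.e.\ the fibers of $V(k,n)\to\operatorname{Gr}(k,n)$; Stokes' theorem turns the action spectrum into periods of a primitive $\lam$ of $\om_0$ restricted to a leaf, well defined on $H_1(F;\Z)\iso H_1(\U(k);\Z)\iso\Z$ because $\lam|_F$ is closed and $\R^{2kn}$ is simply connected; the right $\U(n)$-action gives leaf-independence of the resulting subgroup $c_F\Z$; and the explicit disk $z\mapsto[\operatorname{diag}(z,1,\dots,1)\mid 0]$ evaluates the generator to $\pi$, so $S(\R^{2kn},\om_0,V(k,n))=\pi\Z$ and $A=\pi$. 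Two small points you handle implicitly but should keep in mind: identifying leaves with orbits uses connectedness and compactness of $\U(k)$ (the orbit is open and closed in its leaf), and the sign ambiguity of the generator is harmless since the image of the period homomorphism is a subgroup of $\R$. The $\U(n)$-equivariance step is genuinely needed, exactly as you say, to rule out spectrum values in $(0,\pi)$ coming from other leaves; without it you would only get the upper bound $A\leq\pi$.
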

\begin{proof}\setcounter{claim}{0}
For a proof we refer to \cite[Proposition 1.3]{ZiLeafwise}.
\end{proof}
We used the next remark in the proof of Theorem \ref{thm:exists X}(\ref{thm:exists X:k'}). Recall the definition (\ref{eq:e X M om}) of the displacement energy. 
\begin{rem}\label{rem:e M M'} Let $(M,\om)$ and $(M',\om')$ be symplectic manifolds and $X\sub M$ a subset. Then we have
\[e\big(X\x M',M\x M',\om\oplus\om'\big)\leq e(X,M,\om).\]
This follows from a straight-forward argument. $\Box$
\end{rem}
The next lemma was used in the proof of Proposition \ref{prop:Vert}(\ref{prop:Vert:sympl}). For a proof see \cite[Lemma 35]{SZSmall}. We denote by $\Ham_c(M,\om)$ the group of Hamiltonian diffeomorphisms of $M$ generated by a compactly supported function, and by $\Vert\cdot\Vert^{M,\om}_c$ the compactly supported Hofer norm on this group. 
\begin{lem}[\cite{SZSmall}]\label{le:phi psi} Let $(M,\om)$ be a symplectic manifold, $K\sub M$ a compact subset, $\phi\in\Ham(M,\om)$, and $\eps>0$. Then there exists $\psi\in\Ham_c(M,\om)$ such that 
\begin{equation}\label{eq:psi K d c}\psi|_K=\phi|_K,\quad\Vert\psi\Vert^{M,\om}_c\leq\Vert\phi\Vert^{M,\om}+\eps.
\end{equation} 
(Here our convention is that $\infty+\eps:=\infty$.)
\end{lem}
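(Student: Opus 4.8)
The plan is to prove Lemma~\ref{le:phi psi}, following the strategy of cutting off a Hamiltonian generator near a compact set without changing the flow there. First I would choose $H\in\HH(M,\om)$ with $\phi_H^1=\phi$ and $\Vert H\Vert^{M,\om}<\Vert\phi\Vert^{M,\om}+\eps/2$; if $\Vert\phi\Vert^{M,\om}=\infty$ there is nothing to prove, so assume it is finite. The naive idea of multiplying $H$ by a bump function supported near $K$ fails, because it would destroy the flow on $K$ unless the bump is identically $1$ on the whole flow-tube $\bigcup_{t\in[0,1]}\phi_H^t(K)$; so the correct object to localize is not $H$ but the time-dependent generator along the trajectory.

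The key step is therefore to pass to the compact \emph{flow-tube}. Set $L:=\bigcup_{t\in[0,1]}\phi_H^t(K)$, which is compact since $K$ is compact and $(t,x)\mapsto\phi_H^t(x)$ is continuous on $[0,1]\x M$. Choose a relatively compact open neighborhood $W$ of $L$ and a cutoff $\rho\in C^\infty(M,[0,1])$ with $\rho\equiv1$ on a neighborhood of $L$ and $\supp\rho\sub W$. Define $\wt H(t,x):=\rho(x)H(t,x)$. Then $\wt H$ has compact support contained in $[0,1]\x W$, so $\psi:=\phi_{\wt H}^1\in\Ham_c(M,\om)$. Because $\rho\equiv1$ on a neighborhood of $L$, an induction/continuity argument (the set of $t$ for which $\phi_{\wt H}^s|_K=\phi_H^s|_K$ for all $s\le t$ is open, closed, and nonempty in $[0,1]$) shows $\phi_{\wt H}^t|_K=\phi_H^t|_K$ for all $t$, in particular $\psi|_K=\phi|_K$, giving the first part of~(\ref{eq:psi K d c}).

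For the norm estimate, the issue is that $\Vert\wt H\Vert^{M,\om}=\int_0^1(\sup_M\rho H^t-\inf_M\rho H^t)\,dt$ need not be bounded by $\Vert H\Vert^{M,\om}+\eps/2$, because multiplying by $\rho\in[0,1]$ can only shrink the range of a function that already straddles $0$, but if $H^t$ does not straddle $0$ the product with $\rho$ can have a \emph{larger} oscillation. The standard fix is to first normalize: replace $H$ by $H'(t,x):=H(t,x)-c(t)$ for a suitable smooth function $c$ (e.g.\ $c(t):=\inf_M H^t$ when $M$ is noncompact and $H^t$ is bounded below, or more robustly $c(t):=H(t,x_0(t))$ for a smoothly chosen basepoint), which changes neither the Hamiltonian vector field nor $\Vert H\Vert^{M,\om}$, and arrange that $H'$ has the same sign behavior needed so that $\rho H'$ has oscillation on $W$ at most that of $H'$ on $\overline W$ plus $\eps/2$. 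Concretely, by shrinking $W$ around the compact tube $L$ we can make $\sup_{\overline W}H'^t$ and $\inf_{\overline W}H'^t$ as close as we like to $\sup_L H'^t$ and $\inf_L H'^t$ uniformly in $t$; since $\rho\in[0,1]$ and $\rho=0$ off $W$, one then gets $\sup_M\rho H'^t-\inf_M\rho H'^t\le\sup_{\overline W}H'^t-\inf_{\overline W}H'^t$ when $H'^t\ge 0$ on $\overline W$ (and symmetrically), which after integrating yields $\Vert\wt H\Vert^{M,\om}\le\Vert H\Vert^{M,\om}+\eps/2<\Vert\phi\Vert^{M,\om}+\eps$. Since $\psi$ is generated by the compactly supported $\wt H$, $\Vert\psi\Vert^{M,\om}_c\le\Vert\wt H\Vert^{M,\om}$, which is the second inequality in~(\ref{eq:psi K d c}).

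The main obstacle I anticipate is exactly this sign/normalization bookkeeping in the norm estimate: unlike in the compactly-supported-from-the-start setting, one must be careful that the cutoff does not increase the Hofer length, and this requires choosing the time-dependent normalizing constant $c(t)$ smoothly and verifying it is compatible with the requirement that $\rho\equiv1$ on the flow-tube. A secondary technical point is justifying that the flow of $\wt H$ is globally defined (immediate, since $\wt H$ has compact support) and that the flow-tube argument identifying $\psi|_K=\phi|_K$ is rigorous, which is a routine ODE-uniqueness continuation argument of the kind already invoked in Lemma~\ref{le:BAR H K} and the proof of Claim~\ref{claim:phi -1}.
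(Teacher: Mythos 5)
Your strategy---pick a generator $H$ of $\phi$ with $\Vert H\Vert_M<\Vert\phi\Vert^{M,\om}+\eps$, cut it off along the compact flow tube $L=\bigcup_{t\in[0,1]}\phi_H^t(K)$ after normalizing by a time-dependent constant, and use ODE uniqueness to keep the flow unchanged on $K$---is the standard proof of this lemma and is essentially the intended one: the present paper does not prove the statement at all (it cites \cite[Lemma 35]{SZSmall}), but the identical cutoff-plus-normalization device appears in the paper's own proof of Proposition \ref{prop:Vert X Y Y'} (Claim \ref{claim:eps H'}).

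Two details need repair, both within your framework. First, if $\Vert\phi\Vert^{M,\om}=\infty$ it is not true that ``there is nothing to prove'': one must still exhibit $\psi\in\Ham_c(M,\om)$ with $\psi|_K=\phi|_K$; your construction applied to an arbitrary generator of $\phi$ does this, with the norm bound then vacuous. Second, the norm bookkeeping is garbled as written: under your hypothesis that ${H'}^t\ge0$ on $\BAR{W}$, the asserted inequality $\sup_M\rho{H'}^t-\inf_M\rho{H'}^t\le\sup_{\BAR{W}}{H'}^t-\inf_{\BAR{W}}{H'}^t$ can fail, since the left-hand side equals $\sup_M\rho{H'}^t-0$, which exceeds the right-hand side whenever $\inf_{\BAR{W}}{H'}^t>0$; moreover $c(t):=\inf_M H^t$ need be neither finite nor smooth in $t$. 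The clean choice, which you mention only in passing, is $c(t):=H(t,x_0)$ for a single fixed point $x_0\in L$ (exactly as in Claim \ref{claim:eps H'}): then ${H'}^t(x_0)=0$ with $x_0\in\BAR{W}$, so every value of $\rho{H'}^t$ lies in $\big[\inf_{\BAR{W}}{H'}^t,\sup_{\BAR{W}}{H'}^t\big]$, giving $\Vert\wt H\Vert_M\le\Vert H\Vert_M$ outright; in particular no shrinking of $W$ and no extra $\eps/2$ are needed, because here---unlike in Claim \ref{claim:eps H'}, where the comparison is with the oscillation over $Y$ only---the comparison is with the oscillation of $H$ over all of $M$.
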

\subsection{Topology and manifolds}\label{subsec:top mf}
In Section \ref{subsec:proofs:prop:Ham X om,Vert} we used the following remark. Let $M$ be a $C^\infty$-manifold and $V$ a time-dependent vector field on $M$, i.e., a smooth map $[0,1]\x M\in(t,x)\mapsto V^t(x)\in TM$ such that $\pi\circ V^t=\id_M$, where $\pi:TM\to M$ denotes the canonical projection. We denote by $\DD_V\sub [0,1]\x M$ the domain of the flow of $V$, and by $\phi_V$ the flow of $V$. 
\begin{rem}\label{rem:phi X t} The set $\DD_V$ is open, and for every $t\in[0,1]$, the map 
\[\phi_V^t:\DD_V^t:=\big\{x\in M\,\big|\,(t,x)\in\DD_V\big\}\to M\] 
is injective and an immersion. (This follows for example from \cite[Theorem 17.15, p.~451, and Problem 17-15, p.~463]{Le}.)
\end{rem}
The following lemma was used in the proof of Proposition \ref{prop:H wt H}.
\begin{lem}\label{le:Urysohn} Let $M$ be a smooth manifold and $A_i\sub M$ a closed subset, for $i=0,1$. If $A_0\cap A_1=\emptyset$ then there exists a function $f\in C^\infty(M,[0,1])$ such that $f|_{A_i}\const i$, for $i=0,1$. 
\end{lem}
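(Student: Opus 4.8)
The final statement to prove is Lemma~\ref{le:Urysohn}, the smooth Urysohn lemma for a smooth manifold $M$ and two disjoint closed subsets $A_0, A_1$.

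\textbf{Approach.} The plan is to reduce the statement to the existence of smooth partitions of unity subordinate to an open cover, a standard fact available on any smooth manifold (which is second countable and Hausdorff, hence paracompact). First I would form the open cover $\{U_0, U_1\}$ of $M$ given by $U_0 := M \wo A_1$ and $U_1 := M \wo A_0$; this is indeed a cover since $A_0 \cap A_1 = \emptyset$. Next I would invoke the existence of a smooth partition of unity $\{\rho_0, \rho_1\}$ subordinate to this cover, so that $\supp \rho_i \sub U_i$, each $\rho_i \geq 0$, and $\rho_0 + \rho_1 \equiv 1$ on $M$. Then I would simply set $f := \rho_1$. Since $\rho_0 + \rho_1 \equiv 1$ and $\supp \rho_0 \sub U_0 = M \wo A_1$, we get $\rho_0 \equiv 0$ on $A_1$, hence $f \equiv 1$ on $A_1$; and $\supp \rho_1 \sub U_1 = M \wo A_0$ gives $f \equiv 0$ on $A_0$. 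Finally $f$ takes values in $[0,1]$ because $0 \leq \rho_1 \leq \rho_0 + \rho_1 = 1$. This gives exactly the function required, with the roles of $0$ and $1$ matching the statement $f|_{A_i} \const i$.

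\textbf{Alternative without partitions of unity.} If one prefers to avoid quoting partition-of-unity machinery, one can argue more directly: since $M$ is a metrizable space, fix a metric $\rho$; when both $A_0, A_1$ are nonempty the function $x \mapsto \rho(x,A_0)/(\rho(x,A_0)+\rho(x,A_1))$ is a continuous Urysohn function, but it is only continuous, so one would then need to smooth it. Smoothing is most cleanly done precisely via a partition of unity argument, so in practice the first approach is the natural one. The degenerate cases $A_0 = \emptyset$ (take $f \const 1$) or $A_1 = \emptyset$ (take $f \const 0$) are handled separately and trivially.

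\textbf{Main obstacle.} There is essentially no serious obstacle here; the only point requiring a word is the justification that smooth partitions of unity exist on $M$, which rests on $M$ being paracompact. Under the blanket conventions of the paper (manifolds are smooth and, implicitly, second countable Hausdorff), this is standard and can be cited, e.g.\ from Lee's book. So the ``hard part'' is merely to state the reduction cleanly and verify the three bullet properties ($f$ smooth, $f$ valued in $[0,1]$, $f|_{A_i}\const i$), each of which is immediate from the defining properties of the partition of unity.
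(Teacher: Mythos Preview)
Your argument is correct. Both your proof and the paper's rely on smooth partitions of unity, so the approaches are close in spirit; the only difference is that the paper first invokes a $C^\infty$-Urysohn lemma on $\R^n$ (citing \cite{KP}) and then globalizes via a partition of unity over a chart cover, whereas you bypass the local step entirely by taking a partition of unity subordinate to the two-set cover $\{M\wo A_1,\,M\wo A_0\}$ and setting $f=\rho_1$. Your route is slightly more direct and avoids the auxiliary Euclidean lemma; the paper's route makes the local smooth bump construction more explicit but is otherwise equivalent.
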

\begin{proof}[Proof of Lemma \ref{le:Urysohn}]\setcounter{claim}{0} This follows from a $C^\infty$-version of Urysohn's Lemma for $\R^n$ (see for example Theorem 1.1.3, p.~4 in \cite{KP}) and a partition of unit argument.
\end{proof}
For the proof of Proposition \ref{prop:H wt H} we need the following result. Let $M$ be a smooth manifold, $N\sub M$ a submanifold, and $E\sub TM|_N$ a subbundle such that $TM|_N$ is the direct sum of $TN$ and $E$. For $x\in N$ we denote by $E_x$ the fiber of $E$ over $x$.
\begin{prop}\label{prop:tubular} Assume that $N$ is closed as a subset of $M$. Then there exists an embedding $\psi:E\to M$ such that, identifying $N$ with the zero section of $E$, we have
\begin{eqnarray}\label{eq:psi N}&\psi|_N=\id_N,&\\
\label{eq:d psi x v}&d\psi(x)v=v,\quad\forall v\in T_x(E_x)=E_x,\,x\in N.&
\end{eqnarray}
\end{prop}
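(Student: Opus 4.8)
The plan is to build the embedding from a Riemannian-geometry tubular neighborhood theorem, but adapted so that the differential at the zero section is literally the identity on the prescribed complement $E$, not merely some isomorphism. First I would equip $M$ with an auxiliary Riemannian metric $g$ that is \emph{adapted} to the splitting $TM|_N = TN \oplus E$, meaning that along $N$ the subbundles $TN$ and $E_x$ are $g$-orthogonal; such a metric exists by a standard partition-of-unity argument, starting from any metric and averaging/modifying it over a locally finite cover trivializing $E$. Since $N$ is closed as a subset of $M$, the exponential map $\exp^g$ is defined on an open neighborhood of the zero section of $TM|_N$, and by the usual tubular neighborhood theorem (see e.g.\ \cite{Le}) there is an open neighborhood $\mathcal{O}$ of the zero section in the normal bundle $\nu = (TN)^{\perp_g} = E$ on which $\exp^g$ restricts to a diffeomorphism onto an open neighborhood of $N$ in $M$.

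Next I would upgrade this to a map defined on \emph{all} of $E$, not just $\mathcal{O}$, by composing with a fiberwise diffeomorphism $E \to \mathcal{O}$ that is the identity near the zero section. Concretely, using the closedness of $N$ one can choose a smooth positive function $\eps: N \to (0,\infty)$ so that the $\eps$-disk bundle lies in $\mathcal{O}$, then pick a diffeomorphism $\rho: [0,\infty) \to [0,\eps(x))$ (depending smoothly on $x$) with $\rho(s) = s$ for $s$ near $0$, and define the radial rescaling $\sigma: E \to \mathcal{O}$ by $\sigma(v) := \frac{\rho(|v|)}{|v|} v$ for $v \neq 0$ and $\sigma(0_x) := 0_x$. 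Then $\psi := \exp^g \circ \sigma$ is a smooth embedding of $E$ into $M$. Condition (\ref{eq:psi N}) is immediate since $\sigma$ and $\exp^g$ both fix the zero section. For (\ref{eq:d psi x v}): since $\sigma$ is the identity in a neighborhood of $0_x$, we have $d\psi(x)v = d(\exp^g)(0_x)v$ for $v \in T_x(E_x) = E_x$, and the standard fact $d(\exp^g)(0_x) = \id$ (under the canonical identification $T_{0_x}(TM|_N) \cong T_xN \oplus E_x \cong T_xM$) gives $d\psi(x)v = v$, using here that $E_x = \nu_x$ is exactly the normal space so the identification is the obvious inclusion.

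The main obstacle I anticipate is purely bookkeeping rather than conceptual: making sure the radial rescaling $\sigma$ is genuinely smooth across the zero section (the factor $\rho(|v|)/|v|$ must extend smoothly to $v = 0$, which is why one takes $\rho(s) = s$ near $0$ and must check smoothness of $|v|$-dependent expressions away from $0$ carefully), and verifying that the chosen $\eps(x)$ can be taken smooth and positive globally — this is where closedness of $N$ in $M$ is used, to ensure the tubular neighborhood $\mathcal{O}$ genuinely contains a disk bundle of positive radius over every point and that radius can be chosen to vary smoothly. Once the adapted metric is in hand, the rest is the classical tubular neighborhood construction with this one cosmetic modification, so no deep input beyond what is cited in \cite{Le} is needed.
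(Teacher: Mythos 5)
Your construction is correct and is essentially the same route the paper takes: the paper simply defers to the standard tubular neighborhood argument (citing the proof of Theorem 5.2 in \cite{Hi}), and your adapted metric, exponential map, fiberwise compression of $E$ onto a disk bundle fixing a neighborhood of the zero section, and verification that $d\psi(x)v=v$ on $E_x$ are exactly the routine adjustments that ``along the lines of'' is meant to cover. No gap; the details you flag (smoothness of the radial rescaling across the zero section and a smooth positive fiber radius $\eps$) are indeed the only bookkeeping points, and they are handled as you describe.
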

\begin{proof}[Proof of Proposition \ref{prop:tubular}]\setcounter{claim}{0} This follows from a standard argument, along the lines of the proof of Theorem 5.2 in the book \cite{Hi}.
\end{proof}
The following lemma was used in the proof of Lemma \ref{le:d X limsup}.
\begin{lem}\label{le:X 0 phi psi} Let $M$ be a $C^1$-manifold, $X_0\sub M$ a compact subset, $\phi\in C^1(M,M)$ and $[0,1]\x M\ni(t,x)\mapsto\psi^t(x)\in M$ a $C^1$-map. Assume that $\psi^0=\id$ and there exists a function $f\in C^\infty(M,\R)$ such that 
\begin{eqnarray}\label{eq:f X 0}&f(X_0)\sub(-\infty,0],\quad f\circ\phi(X_0)\sub[0,\infty),&\\
\label{eq:f psi t x}&\left.\frac d{dt}\right|_{t=0}(f\circ\psi^t)(x)\leq0,\quad\left.\frac d{dt}\right|_{t=0}(f\circ\phi\circ\psi^t)(x)>0,\,\forall x\in X_0.&
\end{eqnarray}
Then there exists $t_0>0$ such that for every $t\in(0,t_0]$, we have
\[\phi\circ\psi^t(X_0)\cap\psi^t(X_0)=\emptyset.\]
\end{lem}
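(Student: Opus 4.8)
The idea is to separate $\phi\circ\psi^t(X_0)$ and $\psi^t(X_0)$ by looking at the values of $f$ on the two sets and showing that for small $t>0$ they lie on opposite sides of $0$. First I would exploit compactness of $X_0$: the functions
\[
x\mapsto\left.\frac d{dt}\right|_{t=0}(f\circ\psi^t)(x),\qquad x\mapsto\left.\frac d{dt}\right|_{t=0}(f\circ\phi\circ\psi^t)(x)
\]
are continuous on $X_0$ (they depend continuously on $x$ since $\psi$, $\phi$, $f$ are $C^1$), so by (\ref{eq:f psi t x}) and compactness there exist $\eps>0$ and $\de>0$ with
\[
\left.\frac d{dt}\right|_{t=0}(f\circ\psi^t)(x)\leq0,\qquad \left.\frac d{dt}\right|_{t=0}(f\circ\phi\circ\psi^t)(x)\geq\eps,\quad\forall x\in X_0.
\]

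\textbf{Uniform one-sided estimates along the flow.} Next I would upgrade these infinitesimal inequalities to estimates valid for all small $t>0$, uniformly in $x\in X_0$. For the second one: writing $g_x(t):=f\circ\phi\circ\psi^t(x)$, we have $g_x(0)=f\circ\phi(x)\geq0$ by (\ref{eq:f X 0}) and $g_x'(0)\geq\eps$; since $g_x''$ is bounded uniformly on the compact set $[0,1]\x X_0$ (using $\psi,\phi\in C^1$ — more precisely one should note that a second-order Taylor estimate needs a little care, so it is cleaner to argue that $t\mapsto g_x'(t)$ is continuous in $(t,x)$ and hence $g_x'(t)\geq\eps/2$ for $t\in[0,t_1]$, all $x\in X_0$), it follows that $g_x(t)\geq g_x(0)+\tfrac\eps2 t\geq\tfrac\eps2 t>0$ for $t\in(0,t_1]$. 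In other words, $f>0$ on $\phi\circ\psi^t(X_0)$ for $t\in(0,t_1]$. Similarly, with $h_x(t):=f\circ\psi^t(x)$ we have $h_x(0)=f(x)\leq0$ by (\ref{eq:f X 0}) and $h_x'(0)\leq0$; using continuity of $h_x'(t)$ in $(t,x)$ and the bound $h_x'(0)\leq0$ on the compact set $X_0$, one gets $h_x'(t)\leq\eps'$ for $t\in[0,t_2]$ for a small $\eps'$ — but here one must be slightly more careful since $h_x'(0)$ is only $\leq0$, not bounded away from $0$. The clean route: the set $K:=\{\psi^t(x)\mid t\in[0,t_2],\,x\in X_0\}$ is compact, and by shrinking $t_2$ and using that $h_x(0)\leq0$ together with $h_x'(0)\leq0$ one shows $f\leq0$ is not quite preserved — instead I would prove the weaker but sufficient statement that $\limsup$-type control gives $f(\psi^t(x))\leq o(t)$, hence $f<\tfrac\eps2 t$ on $\psi^t(X_0)$ for small $t$. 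Let $t_0:=\min\{t_1,t_2\}$.

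\textbf{Conclusion.} For $t\in(0,t_0]$ we then have $f>0$ (in fact $f\geq\tfrac\eps2 t$) everywhere on $\phi\circ\psi^t(X_0)$, while $f<\tfrac\eps2 t$ everywhere on $\psi^t(X_0)$; therefore the two sets are disjoint, which is the assertion.

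\textbf{Main obstacle.} The delicate point is the second set of estimates: the hypothesis (\ref{eq:f X 0}) only gives $f(X_0)\sub(-\infty,0]$ and (\ref{eq:f psi t x}) only gives the $t$-derivative $\leq0$ (not strictly negative, not bounded away from $0$), so one cannot naively conclude $f\leq0$ persists along $\psi^t$ for $t>0$. The trick is that we do not need $f\leq0$ on $\psi^t(X_0)$; we only need $f$ on $\psi^t(X_0)$ to be \emph{strictly smaller} than $f$ on $\phi\circ\psi^t(X_0)$, and the latter is $\geq\tfrac\eps2 t$. So it suffices to bound $f\circ\psi^t(x)$ above by something like $\tfrac\eps4 t$ for small $t$, which follows from $f(\psi^0(x))\leq0$ together with a uniform (over the compact set $X_0$) continuity/Taylor bound on $t\mapsto f\circ\psi^t(x)$ — combining the vanishing-at-$t=0$ boundary value with a uniform modulus of continuity of the first derivative. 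Getting these uniformities stated correctly, rather than the estimates themselves, is where the care is needed.
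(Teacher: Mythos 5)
Your proposal is correct and follows essentially the same route as the paper: both arguments use compactness of $X_0$ and uniform first-order control in $t$ (the paper's Remark~\ref{rmk:f t x}, your uniform modulus of continuity of the $t$-derivative) to show that for small $t>0$ the $f$-values on $\phi\circ\psi^t(X_0)$ strictly exceed those on $\psi^t(X_0)$, forcing disjointness. The only difference is bookkeeping: the paper compares the two families in one quantity $d(t)=\frac1t\min_{x,y}\bigl(f\circ\phi\circ\psi^t(y)-f\circ\psi^t(x)\bigr)$, whereas you bound the two sides separately against the threshold $\tfrac\eps2 t$, which is the same estimate.
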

In the proof of this result, we will use the following remark:
\begin{rem}\label{rmk:f t x} Let $X$ be a compact topological space and $[0,1]\x X\ni(t,x)\mapsto f^t(x)\in\R$ a function. Assume that the partial derivative $\dd_tf:[0,1]\x X\to \R$ exists and is continuous. Then we have 
\[\lim_{t\searrow0}\frac1t\max_{x\in X}\left|f^t(x)-f^0(x)-t\dd_t|_{t=0}f^t(x)\right|=0.\]
This follows from an elementary argument. 
\end{rem}
\begin{proof}[Proof of Lemma \ref{le:X 0 phi psi}]\setcounter{claim}{0} Using that $\psi^0=\id$ and that $X_0$ is compact, it follows from Remark \ref{rmk:f t x} that 
\begin{equation}\label{eq:f psi t t dd t}\lim_{t\searrow0}\frac1t\max_{x\in X_0}\left|f\circ\psi^t(x)-f(x)-t\left.\frac d{dt}\right|_{t=0}(f\circ\psi^t(x))\right|=0.
\end{equation}
Furthermore, defining the function $g:[0,1]\x M\to\R$ by
\[g(t,x):=f\circ\phi\circ\psi^t(x)-f\circ\phi(x)-t\left.\frac d{dt}\right|_{t=0}\big(f\circ\phi\circ\psi^t(x)\big),\]
the same remark implies that 
\begin{equation}\label{eq:f phi psi t dd t}\lim_{t\searrow0}\frac1t\max_{x\in X_0}|g(t,x)|=0.
\end{equation}
We denote
\[c:=\min_{x,y\in X_0}\left(\left.\frac d{dt}\right|_{t=0}(f\circ\phi\circ\psi^t)(y)-\left.\frac d{dt}\right|_{t=0}(f\circ\psi^t)(x)\right),\]
\[d(t):=\frac1t\min_{x,y\in X_0}\big(f\circ\phi\circ\psi^t(y)-f\circ\psi^t(x)\big).\]
It follows from (\ref{eq:f X 0},\ref{eq:f psi t t dd t},\ref{eq:f phi psi t dd t}) that
\begin{equation}\nn\liminf_{t\searrow0}d(t)\geq c.\end{equation}
Hence there exists $t_0\in(0,1]$ such that for $t\in(0,t_0]$ we have $d(t)\geq\frac c2$. Compactness of $X_0$ and (\ref{eq:f psi t x}) imply that $c>0$. It follows that for every $t\in(0,t_0]$ and $x,y\in X_0$, we have $f\circ\psi^t(x)\neq f\circ\phi\circ\psi^t(y)$, and therefore, $\psi^t(x)\neq \phi\circ\psi^t(y)$. It follows that $\psi^t(X_0)\cap\phi\circ\psi^t(X_0)=\emptyset$, for every $t\in(0,t_0]$. This proves Lemma \ref{le:X 0 phi psi}.
\end{proof}
The next lemma implies that the Hofer semi-norm given by (\ref{eq:Vert H X}) is well-defined. 
\begin{lem}\label{le:f} Let $X$ be a topological space and $f:[0,1]\x X\to \R$ be a continuous function. Assume that there exists a sequence of compact subsets $K_\nu\sub X$, $\nu\in\N$ such that $\bigcup_\nu K_\nu=X$. Then the map 
\[[0,1]\ni t\mapsto \sup_{x\in X}f(t,x)\]
is Borel measurable. 
\end{lem}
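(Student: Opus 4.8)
The plan is to write the supremum over all of $X$ as a countable supremum of suprema over the compact sets $K_\nu$, and to check that each of those is measurable (in fact upper semi-continuous). First I would observe that, since $X=\bigcup_\nu K_\nu$, for every $t\in[0,1]$ we have
\[\sup_{x\in X}f(t,x)=\sup_{\nu\in\N}\Big(\sup_{x\in K_\nu}f(t,x)\Big),\]
because the right-hand side ranges over a subset of the values $f(t,x)$, $x\in X$, that is cofinal in them. A countable pointwise supremum of Borel measurable functions is Borel measurable, so it suffices to show that for each fixed $\nu$ the function
\[g_\nu:[0,1]\to[-\infty,\infty],\quad g_\nu(t):=\sup_{x\in K_\nu}f(t,x)\]
is Borel measurable (with the convention $g_\nu(t)=-\infty$ when $K_\nu=\emptyset$, which trivially gives a measurable constant function).

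Next I would show that each $g_\nu$ is upper semi-continuous on $[0,1]$, which implies Borel measurability. Fix $c\in\R$ and suppose $t_0\in[0,1]$ satisfies $g_\nu(t_0)<c$, i.e., $f(t_0,x)<c$ for every $x\in K_\nu$. For each $x\in K_\nu$, continuity of $f$ at $(t_0,x)$ gives a product neighbourhood $I_x\times U_x$ of $(t_0,x)$ in $[0,1]\times X$ on which $f<c$. By compactness of $K_\nu$, finitely many of the $U_x$ cover $K_\nu$; let $I$ be the (finite, hence open) intersection of the corresponding $I_x$. Then $I$ is a neighbourhood of $t_0$ in $[0,1]$, and for every $t\in I$ and every $x\in K_\nu$ we have $f(t,x)<c$, so $g_\nu(t)\le c$ — and in fact one gets $g_\nu(t)<c$ after shrinking $c$ slightly, or one simply notes that $\{t: g_\nu(t)<c\}=\bigcup_{m}\{t:g_\nu(t)\le c-1/m\}$, so upper semi-continuity with the non-strict inequality suffices. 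Hence $\{t\in[0,1]:g_\nu(t)<c\}$ is open for every $c$, which is exactly upper semi-continuity; in particular $g_\nu$ is Borel measurable. Combining this with the first step, $t\mapsto\sup_{x\in X}f(t,x)$ is a countable supremum of Borel measurable functions and therefore Borel measurable.

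I do not expect a serious obstacle here; the only point requiring a little care is the reduction in the first step, where one must make sure the countable family $\{K_\nu\}$ is genuinely exhausting so that the supremum over $X$ is recovered, and the handling of the degenerate cases where some $K_\nu$ is empty or where the supremum is $+\infty$ on a set of times (both are harmless, since $\{t:g_\nu(t)=+\infty\}$ and $\{t:\sup_X f(t,\cdot)=+\infty\}$ are Borel as complements of the sets where the function is $<c$ for all integers $c$). The compactness of the $K_\nu$ is what makes each $g_\nu$ upper semi-continuous rather than merely measurable, and it is used exactly once, in the finite-subcover step above.
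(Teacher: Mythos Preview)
Your proposal is correct, and since the paper's own proof consists only of the sentence ``This follows from an elementary argument'', you have in fact supplied the missing details rather than diverged from the authors' approach.

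One small remark: the hedge about getting only $g_\nu(t)\le c$ rather than $g_\nu(t)<c$ is unnecessary. Because $K_\nu$ is compact, the supremum over $K_\nu$ is attained, so if $f(t,x)<c$ for every $x\in K_\nu$ then $g_\nu(t)=\max_{x\in K_\nu}f(t,x)<c$ directly. In fact, since $[0,1]\times K_\nu$ is compact, $f$ is uniformly continuous there, and this shows immediately that each $g_\nu$ is \emph{continuous}, not merely upper semi-continuous; either way Borel measurability follows and the countable-supremum argument finishes the proof.
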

\begin{proof} \setcounter{claim}{0}This follows from an elementary argument. 
\end{proof}

\end{document}